\numberwithin{theorem}{section}
\numberwithin{equation}{section}
\date{\today}
\title{The Riemannian $L^2$ topology on the manifold of Riemannian metrics}
\author{Brian Clarke}
\thanks{This research was supported by NSF grant DMS-0902674.}
\address{Department of Mathematics, Stanford University, Stanford, CA
  94305-2125}
\email{\href{mailto:bfclarke@math.stanford.edu}{bfclarke@math.stanford.edu}}
\urladdr{\href{http://math.stanford.edu/~bfclarke/}{http://math.stanford.edu/${\sim}$bfclarke/}}
\begin{document}

\maketitle{}

\begin{abstract}
  We study the manifold of all Riemannian metrics over a closed,
  finite-di\-men\-sion\-al manifold.  In particular, we investigate
  the topology on the manifold of metrics induced by the distance
  function of the $L^2$ Riemannian metric---so called because it
  induces an $L^2$ topology on each tangent space.  It turns out that
  this topology on the tangent spaces gives rise to an $L^1$-type
  topology on the manifold of metrics itself.  We study this new
  topology and its completion, which agrees homeomorphically with the
  completion of the $L^2$ metric.  We also give a user-friendly
  criterion for convergence (with respect to the $L^2$ metric) in the
  manifold of metrics.
\end{abstract}

\section{Introduction}\label{sec:introduction}

Let $M$ be a smooth, closed, oriented, finite-dimensional manifold,
and let $\M$ denote the space of all Riemannian metrics on $M$.  The
space $\M$ is naturally a Fréchet manifold, and it possesses a
canonical $L^2$ Riemannian metric.  (So called because it induces an
$L^2$-type scalar product on the \emph{tangent spaces} of $\M$.)
Despite the fact that $\M$ is a contractible space, the $L^2$ metric
has rich local geometry---for instance, its curvature is nonnegative,
and its geodesics are explicitly computable
\cite{freed89:_basic_geomet_of_manif_of,gil-medrano91:_rieman_manif_of_all_rieman_metric}.  The $L^2$ metric
has arisen in Teichmüller theory \cite{tromba-teichmueller}, as well
as in studies of the moduli space of Riemannian metrics
\cite{ebin70:_manif_of_rieman_metric}.

In \cite{clarke:_compl_of_manif_of_rieman_metric} and
\cite{clarke:_metric_geomet_of_manif_of_rieman_metric}, we made steps
towards understanding the global geometry of the $L^2$ metric.  In
particular, we showed that the $L^2$ Riemannian metric induces a
metric space structure on $\M$.  (See Section
\ref{sec:manifold-metrics} for a discussion of why this result is
nontrivial.)  We also gave a natural identification of the completion
of $\M$---with respect to the $L^2$ metric---as a quotient space of
the space of all measurable, finite-volume Riemannian semimetrics on
$M$ (Theorem \ref{thm:6}).

In this paper, we carry this study one step further by giving a
simplified description of the topology induced by the distance
function $d$ of the $L^2$ metric on $\M$, as well as on the completion
$\overline{\M}$.  Interestingly, the $L^2$ topology on the tangent
spaces of $\M$ translates---via $d$---into an $L^1$-type topology on
$\M$ and $\overline{\M}$.  An analogous result can be seen in
\cite[Thm.~5.2]{clarked):_compl_of_manif_of_rieman}, where the $L^2$
metric induces an $L^{n/2}$-type topology on the space of metrics
conformally equivalent to a given metric (where $n$ denotes the
dimension of $M$.)

We can describe the main result of the paper as follows.  Let $\satx$
denote the set of symmetric $(0, 2)$-tensors based at $x \in M$, and
let $\Matx \subset \satx$ be those tensors that induce a
positive-definite scalar product on $T_x M$.  (Thus, $\M$ is given by
the smooth sections of the bundle $\cup_{x \in M} \Matx$.)  We define
a quotient space by
\begin{equation*}
  \overline{\Matx} := \grpquot{\cl(\Matx)}{\partial \Matx},
\end{equation*}
where $\cl(\Matx)$ denotes the closure of $\Matx \subset \satx$ and
$\partial \Matx$ denotes the boundary of $\Matx$.  In other words,
$\overline{\Matx}$ is given by positive \emph{semi}-definite tensors
at $x$, where we identify all tensors that are not positive definite.
We will see later (Theorem \ref{thm:6}) that in a precise sense, the
completion $\overline{(\M, d)}$ is given by the measurable sections of
the bundle $\cup_{x \in M} \overline{\Matx}$ that have finite total
volume.

Let $g \in \M$ be an arbitrary reference metric.  Then we have the
following result.

\begin{theorem*}[Theorem \ref{thm:13}]
  For each $x \in M$, there exists a metric (in the sense of metric
  spaces) $\tgx$ on $\overline{\Matx}$ such that the topology of
  $\overline{(\M, d)}$ agrees with the $L^1$ topology of $\tgx$.  That
  is, for $g_0, g_1 \in \overline{(\M, d)}$, let
  \begin{equation*}
    \TM(g_0, g_1) := \integral{M}{}{\tgx(g_0(x), g_1(x))}{d \mu_g},
  \end{equation*}
  where $\mu_g$ denotes the volume form induced by $g$.  Then the
  topology induced by the metric $\TM$ agrees with the topology of
  $\overline{(\M, d)}$, and $\overline{(\M, d)}$ is complete with
  respect to $\TM$.
\end{theorem*}

When studying the topology of $d$, it is desirable to swap this
Riemannian distance function for the simpler description of the above
theorem in terms of the $L^1$ topology of a bundle of metric spaces
over $M$.  (See Sections \ref{sec:manifold-metrics} and
\ref{sec:motiv-defin}.)  In particular, calculating or estimating
$\TM$ involves first computing with $\tgx$ (a finite-dimensional
problem) for each $x$, and then integrating the results over $M$.  On
the other hand, calculating or estimating $d$ involves considering
infima of lengths of paths in $\M$ with respect to the $L^2$
Riemannian metric, a decidedly infinite-dimensional problem.  We will
give some examples of the utility of this approach in Section
\ref{sec:anoth-char-conv}, where we show the discontinuity of numerous
geometric quantities on $M$ with respect to $d$.

The eventual goal of this effort is an understanding of the structure
induced by $d$ on the moduli space of Riemannian metrics (sometimes
also called \emph{superspace}).  This is the quotient space $\M / \D$,
where $\D$ denotes the diffeomorphism group of $M$, acting on $\M$ by
pull-back.  Since $\D$ acts by isometries on $\M$ with the $L^2$
metric \cite[\S 6.1.2]{clarked):_compl_of_manif_of_rieman}, $d$
induces a \emph{pseudo}metric-space structure on $\M / \D$ (which not
a manifold, but rather a stratified space
\cite{bourguignon75:_une_strat_de_lespac_des_struc_rieman}).  It would
be interesting to see what our results can say about the completion
and metric geometry of $\M / \D$, but the first question one must ask
is whether $\M / \D$ is a metric space with this pseudometric.  This
seems to be a difficult question---see the discussion and examples
following Theorem \ref{thm:1} for more on this.  Nevertheless, we hope
that the theorem quoted above may give us some more useful tools for
studying these issues in future papers.

This paper is organized as follows.  In Section
\ref{sec:defin-prev-results}, we review the definitions and previous
results that we will require.  This includes a discussion of the
fundamentals on the manifold of metrics and the $L^2$ metric, the
completion of $\M$, and some structures and properties that were laid
out in our previous works
\cite{clarke:_compl_of_manif_of_rieman_metric} and
\cite{clarke:_metric_geomet_of_manif_of_rieman_metric}.  We also
include a few novel results and extensions of previous results that
will be useful to us in subsequent sections.

In Section \ref{sec:metric-tm}, we include a detailed discussion of
the metric $\TM$ given in the above theorem.  In particular, we will
examine the relationship between $\TM$ and volume, as well as
describing the complete $L^1$ space determined by $\tgx$ on the bundle
$\cup_{x \in M} \overline{\Matx}$.

Finally, in Section \ref{sec:convergence-mf}, we give the proof of the
above-quoted main theorem.  In addition, at the end of the section we
give an alternative characterization of convergence with respect to
$d$ when the limit is an element of $\M$ (as opposed to the
completion).  This in fact gives a relatively easy to verify criterion
for convergence---it is simply a kind of convergence in measure,
together with a strong convergence of the volume forms.

\subsection*{Acknowledgements}

I wish to thank Guy Buss for helpful discussions during the
preparation of this paper.  This paper is an extension of ideas from
my Ph.D.~thesis, and so I also wish to thank my advisor Jürgen Jost
for introducing the topic to me and for his years of support.

\section{The manifold of Riemannian
  metrics}\label{sec:defin-prev-results}

In this section, we review some of the structures and results relating
to the geometry of the $L^2$ metric's distance function that were
introduced in \cite{clarke:_compl_of_manif_of_rieman_metric} and
\cite{clarke:_metric_geomet_of_manif_of_rieman_metric}.  We will draw
on these results throughout the rest of the paper.  In addition, we
introduce a few new concepts and some new notation that will be
important for us.  Most of the facts stated here can be found in
greater detail in \cite[\S 2]{clarke:_compl_of_manif_of_rieman_metric}
and \cite[\S 2]{clarke:_metric_geomet_of_manif_of_rieman_metric}.  (An
even more elementary discussion can be found in
\cite[Ch.~2]{clarked):_compl_of_manif_of_rieman}.)

To begin, though, we must recall some fundamentals on the manifold of
Riemannian metrics and the $L^2$ metric.

\subsection{The manifold of metrics}\label{sec:manifold-metrics}

The facts stated in this section were established in
\cite{ebin70:_manif_of_rieman_metric},
\cite{freed89:_basic_geomet_of_manif_of} and
\cite{gil-medrano91:_rieman_manif_of_all_rieman_metric}, to which we
refer for further details.  A detailed overview is also given in
\cite[Ch.~2]{clarked):_compl_of_manif_of_rieman}.

Let $M$ be a $C^\infty$-smooth, closed, oriented manifold of dimension
$n$.  We denote by $\s$ the vector space of smooth, symmetric $(0,
2)$-tensor fields on $M$.  It is a Fréchet space when equipped with
the family of Sobolev $H^s$ norms for $s \in \N$ \cite[\S
2.5.1]{clarked):_compl_of_manif_of_rieman}.

The set $\M$ of all smooth Riemannian metrics on $M$ is an open,
positive cone in $\s$.  As such, $\M$ is trivially a Fréchet manifold.
Additionally, its tangent space $T_g \M$ at any point $g \in \M$ can
be canonically identified with $\s$.

For each $x \in \M$, we define $\satx := S^2 T^*_x M$ to be the set of
all symmetric $(0, 2)$-tensors based at $x$.  We denote by $\Matx
\subset \satx$ the open, positive cone of all such tensors that induce
a positive definite scalar product on $T_x M$.  For each $a \in
\Matx$, there is a natural scalar product on $\satx$ given by
\begin{equation}\label{eq:36}
  \langle b, c \rangle_a := \tr_a(bc) = a^{ij} a^{lm} b_{il} c_{jm}
  \quad \textnormal{for all}\ b, c \in \satx.
\end{equation}
The last expression in the above requires the choice of some
coordinates around $x$, but the resulting value will clearly be
coordinate-independent.  Furthermore, $\langle \cdot, \cdot \rangle_a$
is positive definite for each $a \in \Matx$ \cite[Lemma
2.35]{clarked):_compl_of_manif_of_rieman}.  We will denote the norm
associated to (\ref{eq:36}) by $\abs{\, \cdot \,}_a$, i.e.,
\begin{equation}\label{eq:38}
  \abs{b}_a = \sqrt{\langle b, b \rangle_a}.
\end{equation}

By integrating the scalar product (\ref{eq:36}), we can obtain a
scalar product on elements of $\s$, giving us a Riemannian metric on
$\M$.  This is the \emph{$L^2$ metric}, and explicitly, it is given by
the following: For $g \in \M$ and $h, k \in \s \cong T_g \M$,
\begin{equation}\label{eq:37}
  (h, k)_g := \integral{M}{}{\tr_g(h k)}{d \mu_g},
\end{equation}
where $\mu_g$ denotes the volume form induced by $g$.  This is indeed
a Riemannian metric.  Firstly, it is positive definite, as $\langle
\cdot, \cdot \rangle_{g(x)}$ is for each $x \in M$.  And secondly,
$(\cdot, \cdot)$ varies smoothly with $g$, as shown by Ebin
\cite[pp.~18--19]{ebin70:_manif_of_rieman_metric}.  Additionally, this
metric is invariant under the diffeomorphism group $\D$, which acts by
pull-back \cite[\S 6.1.2]{clarked):_compl_of_manif_of_rieman}.  (That
is, $\D$ acts by isometries.)  Throughout the rest of this paper, we
denote the Riemannian distance function of $(\cdot, \cdot)$ by $d$.
We denote the norm on $\s$ induced by (\ref{eq:37}) by $\normdot_g$,
that is,
\begin{equation*}
  \norm{h}_g = \sqrt{(h, h)_g}.
\end{equation*}

The curvature (cf.~\cite[\S 1]{freed89:_basic_geomet_of_manif_of},
\cite[\S 2.5--2.9]{gil-medrano91:_rieman_manif_of_all_rieman_metric})
and geodesics (cf.~\cite[Thm.~2.3]{freed89:_basic_geomet_of_manif_of},
\cite[Thm.\ 3.2]{gil-medrano91:_rieman_manif_of_all_rieman_metric}) of
the $L^2$ metric have been explicitly computed.  We will not need them
here, except for some very special geodesics.  If we let $\pos \subset
C^\infty(M)$ denote the space of smooth, positive functions on $M$,
then $\pos$ acts on $\M$ by pointwise multiplication (conformal
changes), and we have the following result.

\begin{proposition}[{\cite[Prop.~2.1]{freed89:_basic_geomet_of_manif_of}}]\label{prop:8}
  The geodesic starting at $g_0 \in \M$ with initial tangent vector
  $\rho g_0$, where $\rho \in C^\infty(M)$, is given by
  \begin{equation*}
    g_t = \left( 1 + n \frac{t}{4} \rho \right)^{4/n} g_0.
  \end{equation*}
  In particular, the exponential mapping $\exp_{g_0}$ is a
  diffeomorphism from the open set of functions $\{ \rho \in \pos \mid
  \rho > -4/n \}$ onto $\pos \cdot g_0$.
\end{proposition}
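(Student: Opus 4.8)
The plan is to verify directly that the given curve $g_t = (1 + \tfrac{n t}{4}\rho)^{4/n} g_0$ is a geodesic of the $L^2$ metric, and then to use this explicit formula together with a monotonicity argument to establish the claim about the exponential map. For the first part, I would not want to plug into the full geodesic equation of the $L^2$ metric (which requires the Christoffel symbols computed in the cited references); instead, I would restrict attention to the totally geodesic submanifold $\pos \cdot g_0$ of conformal deformations. The key structural fact is that the pointwise scalar product \eqref{eq:36} behaves simply under conformal rescaling: if $a' = f a$ for a positive scalar $f$, then $\langle b, c\rangle_{a'} = f^{-2}\langle b, c\rangle_a$, and in particular for tangent vectors of the form $b = \phi a$, $c = \psi a$ one gets $\langle b, c\rangle_a = n\,\phi\psi$. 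Consequently, writing $g_t = f_t g_0$ with $f_t = (1 + \tfrac{nt}{4}\rho)^{4/n}$, the $L^2$ metric restricted to this family reduces to a weighted one-dimensional problem: $(\dot g_t, \dot g_t)_{g_t} = \int_M n (\dot f_t / f_t)^2 f_t^{n/2}\, d\mu_{g_0}$, since $\mu_{g_t} = f_t^{n/2}\mu_{g_0}$. Introducing the substitution $u_t = f_t^{n/4} = 1 + \tfrac{nt}{4}\rho$, which is affine in $t$, one checks that $(\dot f_t/f_t)^2 f_t^{n/2}$ is a constant multiple of $\dot u_t^{\,2}$, so the energy functional becomes $\int_M (\text{const})\,\dot u_t^{\,2}\, d\mu_{g_0}$ — and a curve $t \mapsto u_t$ that is affine in $t$ pointwise is manifestly a critical point, hence a geodesic. (One should remark that $\pos\cdot g_0$ is totally geodesic in $\M$, so being a geodesic within the conformal class is the same as being a geodesic in $\M$; alternatively one cites Proposition 2.1 of \cite{freed89:_basic_geomet_of_manif_of} directly and this paragraph becomes a verification.)

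For the statement about $\exp_{g_0}$, I would proceed as follows. By definition of the exponential map and the geodesic just exhibited, $\exp_{g_0}(\rho g_0) = g_1 = (1 + \tfrac{n}{4}\rho)^{4/n} g_0$ whenever $g_1$ is still a positive-definite metric, i.e.\ whenever $1 + \tfrac{n}{4}\rho > 0$ everywhere, which is exactly the condition $\rho > -4/n$. This shows $\exp_{g_0}$ is defined on the open set $\{\rho \in \pos : \rho > -4/n\}$ (note $\pos$ here should presumably be $C^\infty(M)$ — I would use whichever the paper's conventions dictate) and maps it into $\pos \cdot g_0$. To see it is a bijection onto $\pos\cdot g_0$: given any $h \in \pos$, solve $(1 + \tfrac{n}{4}\rho)^{4/n} = h$ for $\rho$, obtaining $\rho = \tfrac{4}{n}(h^{n/4} - 1)$, which is smooth and satisfies $\rho > -4/n$ since $h > 0$; this gives the inverse explicitly. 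Smoothness of both $\exp_{g_0}$ and its inverse then follows because both are given by composing the smooth map $h \mapsto h^{n/4}$ (and its inverse $h \mapsto h^{4/n}$) on positive functions with affine maps — these are smooth maps of Fréchet spaces. Hence $\exp_{g_0}$ is a diffeomorphism onto $\pos\cdot g_0$.

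The main obstacle I anticipate is the verification that the conformal curves are genuinely geodesics in $\M$ (not merely in the submanifold $\pos \cdot g_0$), if one wants a self-contained argument; this requires knowing that $\pos\cdot g_0$ is totally geodesic, which in turn follows from the symmetry of the $L^2$ metric under the conformal action combined with the fixed-point structure, or directly from the curvature computations in the cited works. If instead one is content to cite \cite{freed89:_basic_geomet_of_manif_of} for the geodesic formula itself, then essentially no obstacle remains and the proof is a short computation; I would most likely take the latter route since the proposition is attributed to that reference, and spend the remaining effort on making the $\exp_{g_0}$ bijectivity and smoothness argument clean, since that is the part actually used later in the paper.
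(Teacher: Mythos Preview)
The paper does not supply its own proof of this proposition: it is stated with attribution to \cite[Prop.~2.1]{freed89:_basic_geomet_of_manif_of} and no argument is given in the text. So there is no ``paper's proof'' to compare against; your proposal stands on its own.

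Your argument is sound. The substitution $u_t = f_t^{n/4}$ reducing the restricted energy to a flat $L^2$ functional in $u$ is the standard way to see the conformal geodesics, and your explicit inverse $\rho = \tfrac{4}{n}(h^{n/4}-1)$ together with smoothness of $h \mapsto h^{n/4}$ on positive functions correctly establishes the diffeomorphism claim. You are also right to flag the apparent typo in the domain: as written, $\{\rho \in \pos \mid \rho > -4/n\}$ is redundant (positive functions automatically satisfy $\rho > -4/n$) and does not surject onto $\pos \cdot g_0$; the intended domain is $\{\rho \in C^\infty(M) \mid \rho > -4/n\}$, as your computation of the inverse confirms.

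The only genuine obligation you identify---that $\pos \cdot g_0$ be totally geodesic in $\M$---is real, and your two suggested resolutions are both legitimate. The cleanest self-contained route, should you want one, is to plug the candidate curve directly into the full geodesic equation of the $L^2$ metric (given in \cite{freed89:_basic_geomet_of_manif_of} or \cite{gil-medrano91:_rieman_manif_of_all_rieman_metric}); since $g_t^{-1} g_t'$ is a scalar multiple of the identity at each point, all terms collapse and the verification is a one-line computation. But since the proposition is explicitly attributed, simply citing the reference for the geodesic formula and reserving your effort for the exponential-map half, as you propose, is entirely in keeping with how the paper treats it.
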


We must remark here that the $L^2$ metric is a so-called \emph{weak
  Riemannian metric} (cf.~\cite[\S
3]{clarke:_metric_geomet_of_manif_of_rieman_metric}), which means that
each tangent space $T_g \M$ is incomplete with respect to $(\cdot,
\cdot)_g$, or equivalently that the topology induced by $(\cdot,
\cdot)_g$ on $T_g \M$ is weaker than the manifold topology.  In fact,
as a consequence, standard results in Riemannian geometry---even the
existence of the Levi-Civita connection, curvature tensor, and
geodesics---do not hold \emph{a priori}.  However, Ebin \cite[\S
4]{ebin70:_manif_of_rieman_metric} gave a direct proof that the
Levi-Civita connection of the $L^2$ metric exists, and so in
particular, the curvature tensor and geodesics exist as well.  One can
even show
\cite[Thm.~3.4]{gil-medrano91:_rieman_manif_of_all_rieman_metric} that
the exponential mapping $\exp_g$ at $g \in \M$ is a real-analytic
diffeomorphism between subsets of $T_g \M$ and $\M$ that are open in
the manifold topology.

On the other hand, a serious difficulty in studying the distance
function $d$ is that $\exp_g$ is \emph{not} defined on any subset of
$T_g \M$ that is open with respect to $(\cdot, \cdot)_g$, and its
image does not contain any open $d$-ball around $g$.  In such a
situation, it can happen that the Riemannian distance function is only
a \emph{pseudo}metric, i.e., positive definiteness is not guaranteed.
(See \cite{michor05:_vanis_geodes_distan_spaces_of,michor06:_rieman_geomet_spaces_of_plane_curves} for examples where
the distance function even vanishes everywhere!)  As we showed in
\cite{clarke:_metric_geomet_of_manif_of_rieman_metric}, though, this
is not the case here---$d$ in fact induces a metric space structure on
$\M$.  Nevertheless, there do exist points that are arbitrarily close
with respect to $d$, yet are not connected by a geodesic.  This is
another reason to pursue our alternative way to estimate $d$.

\subsection{The completion of $\M$}\label{sec:completion-m}

\begin{convention}\label{cvt:2}
  For the remainder of the paper, we fix an element $g \in \M$.
  Whenever we refer to the $L^2$ norm $\normdot_g$ and
  $L^2$-convergence, we mean that induced by $g$ unless we explicitly
  state otherwise.  The designation nullset refers to
  Lebesgue-measurable subsets of $M$ that have measure zero with
  respect to the volume form $\mu_g$ of $g$.  If we say that something
  holds almost everywhere, we mean that it holds outside of a
  $\mu_g$-nullset.

  If we have a tensor $h \in \s$, we denote by the capital letter $H$
  the tensor obtained by raising an index with $g$, i.e., locally
  $H^i_j := g^{ik} h_{kj}$.  We sometimes also write $H = g^{-1} h$.
  Given a point $x \in M$ and an element $a \in \Matx$, the capital
  letter $A$ means the same---i.e., we assume some coordinates and
  write $A = g(x)^{-1} a$, though for readability we will generally
  omit $x$ from the notation.
\end{convention}

To give the description of the completion of $\M$ mentioned in the
introduction, we will have to consider generalizations of Riemannian
metrics.  In particular, we must allow degenerations in both
regularity and positive definiteness.  The next definition covers
these objects.

\begin{definition}\label{dfn:5}
  We denote by $S^2 T^* M$ the bundle of symmetric $(0, 2)$ tensors on
  $M$.  A section of $S^2 T^* M$ (a symmetric $(0, 2)$-tensor field)
  that induces a positive semi-definite scalar product on each tangent
  space of $M$ is called a \emph{Riemannian semimetric} (or simply
  \emph{semimetric}).  (Note we do not make any assumptions on the
  regularity of this section.)

  We call a semimetric $\tilde{g}$ \emph{measurable} if it is a
  measurable section of $S^2 T^* M$, and we denote by $\Mm$ the space
  of all measurable semimetrics on $\M$.
\end{definition}

Note that if $\tilde{g} \in \Mm$, then $\mu_{\tilde{g}} := \sqrt{\det
  \tilde{g}} \, dx^1 \wedge \cdots \wedge dx^n$ is a measurable
$n$-form on $M$ with nonnegative coefficient in each coordinate chart.
Thus, it induces a (Lebesgue) measure on $M$ in the usual way.  This
measure is absolutely continuous with respect to our standard measure
$\mu_g$.  In particular, a sequence that converges a.e.~with respect
to $\mu_g$ converges a.e. with respect to $\mu_{\tilde{g}}$.

For any measurable subset $E \subseteq M$, we denote by $\Vol(E,
\tilde{g})$ (sometimes also denoted $\mu_{\tilde{g}}(E)$) the measure
of the subset $E$ with respect to $\mu_{\tilde{g}}$.  Furthermore, we
define
\begin{equation*}
  \Mf :=
  \left\{
    \tilde{g} \in \Mm \midmid \Vol(M, \tilde{g}) < \infty
  \right\},
\end{equation*}
so that $\Mf$ is the space of \emph{finite-volume measurable
  semimetrics}.

Note that if $\nu$ is a measurable, positive $n$-form (meaning with
positive coefficient) and $\mu$ is a measurable, nonnegative $n$-form,
then there exists a unique nonnegative function $(\mu / \nu)$ on $M$
that satisfies
\begin{equation*}
  \mu =
  \bigg(
  \frac{\mu}{\nu}
  \bigg) \nu.
\end{equation*}
In particular, if $\nu(M) < \infty$ (the only case we will be
concerned with here), then $(\mu / \nu)$ is the Radon-Nikodym
derivative of $\mu$ with respect to $\nu$.

The nonnegative $n$-form of a semimetric $\tilde{g}$ vanishes at
exactly those points where $\tilde{g}$ fails to be positive definite.
This motivates the following definition.

\begin{definition}\label{dfn:7}
  Let $\tilde{g} \in \Mm$.  The \emph{deflated set} of $\tilde{g}$ is
  defined by
  \begin{equation*}
    X_{\tilde{g}} := \{ x \in M \mid \det \tilde{G}(x) = 0 \}.
  \end{equation*}

  Analogously, if $\{g_k\} \in \Mm$ is a sequence, then we define
  \begin{equation*}
    D_{\{g_k\}} := \{ x \in M \mid \det G_k(x) \rightarrow 0 \}.
  \end{equation*}
\end{definition}

Note that, clearly, $\Vol(X_{\tilde{g}}, \tilde{g}) = 0$ for all
$\tilde{g} \in \Mm$.

We next define an equivalence relation on $\Mm$ by saying $g_0 \sim
g_1$ if and only if the following two statements hold:
\begin{enumerate}
\item The degenerate sets $X_{g_0}$ and $X_{g_1}$ agree up to a
  nullset; and
\item $g_0(x) = g_1(x)$ for almost every $x \in M \setminus (X_{g_0}
  \cup X_{g_1})$.
\end{enumerate}
In other words, we say $g_0 \sim g_1$ if and only if $g_0(x)$ and
$g_1(x)$ differ only where they are both deflated (up to a nullset).
Denote by $\Mmhat := \Mm / {\sim}$ and $\Mfhat := \Mf / {\sim}$ the
quotients by this equivalence relation.

There is a natural notion of convergence that allows us to give an
element of $\Mfhat$ as the limit of certain sequences in $\M$.  This
is defined as follows.

\begin{definition}\label{dfn:8}
  Let $\{g_k\}$ be a sequence in $\M$, and let $[g_0] \in \Mfhat$.  We
  say that $\{g_k\}$ \emph{$\omega$-converges} to $[g_0]$ if for every
  representative $g_0 \in [g_0]$, the following holds:
  \begin{enumerate}
  \item \label{item:4} $\{g_k\}$ is $d$-Cauchy,
  \item \label{item:14} $X_{g_0}$ and $D_{\{g_k\}}$ differ at most by
    a nullset,
  \item \label{item:15} $g_k(x) \rightarrow g_0(x)$ for a.e.~$x \in M
    \setminus D_{\{g_k\}}$, and
  \item \label{item:7} $\sum_{k=1}^\infty d(g_k, g_{k+1}) < \infty$.
  \end{enumerate}
  We call $[g_0]$ the \emph{$\omega$-limit} of the sequence $\{g_k\}$
  and write $g_k \overarrow{\omega} [g_0]$.

  More generally, if $\{g_k\}$ is a $d$-Cauchy sequence containing a
  subsequence that $\omega$-converges to $[g_0]$, then we say that
  $\{g_k\}$ \emph{$\omega$-subconverges} to $[g_0]$.
\end{definition}

Note that the definition of $\omega$-convergence requires a sequence
to be $d$-Cauchy---this is not guaranteed by conditions
\ref{item:14} and \ref{item:15}.  Note also that conditions
\ref{item:14} and \ref{item:15} are the ones that give the
substantial properties of an $\omega$-convergent sequence.  Condition
\ref{item:7} is merely technical, and can always be achieved by
passing to a subsequence, provided the sequence is $d$-Cauchy.

It is not hard to see \cite[Lemma
4.5]{clarke:_compl_of_manif_of_rieman_metric} that if one
representative $g_0 \in [g_0]$ satisfies the conditions in the above
definition, then all representatives do.  Therefore, for convenience
we will usually just write things like $g_k \xrightarrow{\omega} g_0$,
even when the equivalence class of $g_0$ is meant.

Finally, we recall the basic facts about completions of metric spaces,
as well as fix notation.  As with any metric space, the completion of
$(\M, d)$ is a quotient space of the set of Cauchy sequences in $\M$.
We define a pseudometric, which for simplicity is also denoted by $d$,
on Cauchy sequences in $\M$ by
\begin{equation*}
  d(\{g_k\}, \{\tilde{g}_k\}) = \lim_{k \rightarrow \infty} d(g_k, \tilde{g}_k).
\end{equation*}
That this limit exists is a straightforward argument using the Cauchy
sequence property.  It is not hard to see that this $d$ is only a
pseudometric on the set of Cauchy sequences, so to get a metric space,
we must identify Cauchy sequences with distance zero in this
pseudometric.  Thus, we write $\{g_k\} \sim \{\tilde{g}_k\}$ if and
only if $\lim d(g_k, \tilde{g}_k) = 0$, and define
\begin{equation*}
  \overline{\M} = \grpquot{\{\textnormal{Cauchy sequences}\ \{g_k\} \subset \M \}}{{\sim}}.
\end{equation*}
Note that $\M$ is isometrically embedded in $\overline{\M}$ by mapping
a point $g \in \M$ to the constant sequence $\{g\}$.  Furthermore, if
$\{g_k\}$ is a Cauchy sequence, then any subsequence $\{g_{k_l}\}$ is
equivalent to the original sequence.  Therefore, we may pass to
subsequences as we like and still be talking about the same element of
$\overline{\M}$.

By the results of \cite{clarke:_compl_of_manif_of_rieman_metric}
(Theorems 4.17, 4.27, 4.39, and
5.14, as well as Corollary 4.21), each Cauchy
sequence in $\M$ $\omega$-subconverges to some limit $[g_0] \in
\Mfhat$.  Furthermore, two Cauchy sequences $\omega$-subconverge to
the same limit if and only if they are equivalent (i.e., represent the
same element of $\overline{\M}$).  And finally, for each element
$[g_0] \in \Mfhat$, there exists some Cauchy sequence in $\M$
$\omega$-subconverging to it.  Putting this together, we have the
following theorem.

\begin{theorem}[{\cite[Thm.~5.17]{clarke:_compl_of_manif_of_rieman_metric}}]\label{thm:6}
  There exists a natural bijection $\Omega : \overline{\M} \rightarrow
  \Mfhat$.  The map $\Omega$ assigns to each equivalence class of
  Cauchy sequences in $\M$ the unique element of $\Mfhat$ that each of
  its members $\omega$-subconverge to.
\end{theorem}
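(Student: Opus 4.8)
The proof of this theorem is essentially an assembly of the structural facts about $\omega$-subconvergence imported from \cite{clarke:_compl_of_manif_of_rieman_metric} and recalled in the paragraph above, so the plan is to check that those facts fit together to produce a well-defined map and that it is bijective, taking care over one point where the logical order matters.

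First I would pin down the assignment on a single Cauchy sequence. Given a $d$-Cauchy sequence $\{g_k\} \subset \M$, the cited results (Thms.~4.17, 4.27, 4.39 and Cor.~4.21) guarantee that it $\omega$-subconverges to \emph{some} $[g_0] \in \Mfhat$. Since $\omega$-subconvergence only asserts the existence of a suitable subsequence, the first thing to verify is that $[g_0]$ is independent of that choice: if subsequences $\{g_{k_l}\}$ and $\{g_{k_m}\}$ of $\{g_k\}$ $\omega$-converge to $[g_0]$ and $[g_1]$ respectively, then both are equivalent to $\{g_k\}$ in $\overline{\M}$, hence equivalent to each other, and the rigidity statement (Thm.~5.14, ``same $\omega$-limit if and only if equivalent'') forces $[g_0] = [g_1]$. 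The same argument, applied to two equivalent Cauchy sequences rather than two subsequences of one, shows the result is unchanged under passing to another representative of an element of $\overline{\M}$. This lets me define $\Omega([\{g_k\}]) := [g_0]$ unambiguously.

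Next, injectivity is the converse half of Thm.~5.14: if $\{g_k\}$ and $\{\tilde{g}_k\}$ both $\omega$-subconverge to the same $[g_0]$, they are equivalent and hence represent the same point of $\overline{\M}$. Surjectivity is precisely the last of the quoted facts---every $[g_0] \in \Mfhat$ arises as the $\omega$-limit of some Cauchy sequence in $\M$. Finally, for the word ``natural'' I would note that for $g \in \M$ the constant sequence $\{g\}$ trivially $\omega$-converges to $[g]$ (here $X_g = \emptyset = D_{\{g\}}$, the pointwise convergence is automatic, and $\sum d(g_k,g_{k+1}) = 0$), so $\Omega$ intertwines the isometric embedding $\M \hookrightarrow \overline{\M}$ with the inclusion $\M \hookrightarrow \Mfhat$; this is what makes $\Omega$ a canonical identification rather than merely an abstract bijection.

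The only point demanding genuine care---and the reason the assembly is not completely formal---is keeping the dependencies among the quoted results straight: the existence statement (Cauchy sequences $\omega$-subconverge) and the rigidity statement (the $\omega$-limit determines the sequence up to equivalence) must both be available before one can even assert that $\Omega$ is a function, and again to conclude injectivity. Since in the present paper all of these ingredients are cited wholesale from \cite{clarke:_compl_of_manif_of_rieman_metric}, the substantive work lies in those references, and here the proof reduces to little more than verifying that the implications chain together as just described.
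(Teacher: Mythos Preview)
Your proposal is correct and matches the paper's treatment exactly: the paper does not give a separate proof but simply lists the three imported facts (existence of an $\omega$-sublimit, the ``same limit iff equivalent'' rigidity, and surjectivity) and then writes ``Putting this together, we have the following theorem.'' Your write-up is a careful unpacking of precisely that assembly, with the added observation about constant sequences justifying the word ``natural.''
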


One goal of this paper is to replace $\omega$-convergence with a
clearer notion.  A slightly unsatisfactory element of the map $\Omega$
is that it requires passing to subsequences of the original Cauchy
sequence.  This is not much of a problem, since subsequences are
equivalent to the original sequence.  The bigger problem is that
$\omega$-convergence does not identify Cauchy sequences---it assumes a
sequence is Cauchy, and the theorems of
\cite{clarke:_compl_of_manif_of_rieman_metric} essentially tell us
that by passing to a subsequence, we obtain the other conditions of
$\omega$-convergence.  By the end of this paper, however, we will have
a more or less explicit condition for a sequence in $\M$ to be Cauchy,
as well as a more complete understanding of how it converges to a
limit in $\Mfhat$.

In light of the above results, we will denote the metric that $\Omega$
induces on $\Mfhat$ again by $d$, and do the same for the pseudometric
thus induced on $\Mf$.  So if we write $d(g_0, g_1)$ with $g_0, g_1
\in \Mf$, it is understood that this is the same as $d(\{g_0^k\},
\{g_1^k\}) = \lim d(g_0^k, g_1^k)$, where $\{g_0^k\}$ and $\{g_1^k\}$
are sequences $\omega$-converging to $g_0$ and $g_1$, respectively.

\subsection{(Quasi-)Amenable subsets}\label{sec:previous-results}

We will need uniform notions of a Riemannian metric on the base
manifold $M$ being ``not too large'' and ``not too small''.  To do so,
we must first fix a ``good'' coordinate chart on $M$ in which we can
evaluate the coefficients of a metric.

\begin{definition}\label{dfn:4}
  We call a finite atlas of coordinates $\{(U_\alpha, \phi_\alpha)\}$
  for $M$ \emph{amenable} if for each $U_\alpha$, there exist a
  compact set $K_\alpha$ and a different coordinate chart $(V_\alpha,
  \psi_\alpha)$ (which does not necessarily belong to $\{ (U_\alpha,
  \phi_\alpha) \}$) such that
  \begin{equation*}
    U_\alpha \subset K_\alpha \subset V_\alpha \quad \textnormal{and}
    \quad \phi_\alpha = \psi_\alpha | U_\alpha.
  \end{equation*}
\end{definition}

\begin{remark}\label{rmk:1}
  One nice property of an amenable atlas is the following.  Since each
  chart of an amenable atlas is a relatively compact subset of a
  different chart on $M$, we see that given any metric $g \in \M$, the
  coefficients $g_{ij}$ of the metric are bounded functions in each
  chart.
\end{remark}

\begin{convention}
  For the remainder of the paper, we work over a fixed amenable
  coordinate atlas $\{(U_\alpha, \phi_\alpha)\}$ for all computations
  and concepts that require local coordinates.  If we say that a
  statement in local coordinates holds at each $x \in M$, then it is
  understood that the statement should hold in each coordinate chart
  of $\{(U_\alpha, \phi_\alpha)\}$ containing $x$.
\end{convention}

With this convention, we can say what it means for a Riemannian
semimetric to be ``not too large''.

\begin{definition}\label{dfn:6}
  We call a Riemannian semimetric \emph{bounded} if we can find a
  constant $C$ such that for all $x \in M$ and all $1 \leq i, j \leq
  n$, we have $\abs{g_{ij}(x)} \leq C$.
\end{definition}

The next definition picks out subsets of $\M$ whose members are
``uniformly bound\-ed'' away from being too large (and too small).

\begin{definition}\label{dfn:3}
  Let $\lambda^{\tilde{G}}_{\min}$ denote the minimal eigenvalue of
  $\tilde{G} = g^{-1} \tilde{g}$.  A subset $\U \subset \M$ is called
  \emph{amenable} if it is of the form
  \begin{equation}\label{eq:16}
    \U = \{ \tilde{g} \in \M \mid \lambda^{\tilde{G}}_{\min} \geq \zeta\
    \textnormal{and}\ |\tilde{g}_{ij}(x)| \leq C\ \textnormal{for all $\tilde{g}
      \in \U$, $x \in M$ and $1 \leq i,j \leq n$} \}
  \end{equation}
  for some constants $C, \zeta > 0$.
  
  We call a subset $\U \subset \M$ \emph{quasi-amenable} if it is of
  the form
  \begin{equation}\label{eq:134}
    \U = \{ \tilde{g} \in \M \mid |\tilde{g}_{ij}(x)| \leq C\ \textnormal{for all $\tilde{g}
      \in \U$, $x \in M$ and $1 \leq i,j \leq n$} \}
  \end{equation}
  for some constant $C \geq 0$.

  For such a subset, we denote by $\U^0$ its closure with respect to
  the $L^2$ norm $\normdot_g$.
\end{definition}

Note also that (quasi-)amenable subsets are convex because of the
convexity of the absolute value and the concavity of the minimal
eigenvalue.

\begin{remark}\label{rmk:2}
  A couple of remarks about these subsets are in order:

  \begin{enumerate}
  \item Definition \ref{dfn:3} differs from the way that
    (quasi-)amenable subsets were defined in
    \cite{clarke:_compl_of_manif_of_rieman_metric}, in that here we
    define them to be maximal with respect to the bounds in
    (\ref{eq:16}) and (\ref{eq:134}).  This will turn out to be the
    most convenient definition.
  \item We could have defined (quasi-)amenable subsets in a
    coordinate-independent way by replacing the condition
    $\abs{\tilde{g}_{ij}} \leq C$ with the condition that for each $x
    \in M$, the set $\absgx{\tilde{g}(x)} \leq C'$ for some other
    constant $C'$.  This would have been completely equivalent for all
    intents and purposes, and is more satisfactory in that it does not
    depend on a choice of coordinate atlas.  However, it would have
    caused the inconvenience of being incompatible with the
    definitions and results of
    \cite{clarke:_compl_of_manif_of_rieman_metric}, at least without a
    good deal of additional remarks at points where we use those
    results.
  \end{enumerate}
\end{remark}

We will not need amenable subsets much in this paper, though they will
come up for technical reasons at one point soon.  The main point of
introducing quasi-amenable subsets is that within a quasi-amenable
subset, we can control the $d$-distance between two metrics using
their distance in the fixed $L^2$ norm $\normdot_g$.  Indeed, we have
the following results, which certainly do not hold on all of $\M$.

\begin{theorem}[{\cite[Thm.~5.12]{clarke:_compl_of_manif_of_rieman_metric}}]\label{thm:7}
  Let $\U \subset \M$ be quasi-amenable.  Then for all $\epsilon > 0$,
  there exists $\delta > 0$ such that if $g_0, g_1 \in \cl(\U)$ (where
  $\cl(\U)$ denotes the topological closure of $\U \subset \s$) with
  $\norm{g_0 - g_1}_g < \delta$, then $d(g_0, g_1) < \epsilon$.
\end{theorem}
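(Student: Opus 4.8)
The plan is to estimate $d(g_0,g_1)$ from above directly by the $L^2$ length of a convenient path in $\M$ joining $g_0$ to $g_1$, controlling everything uniformly over $\cl(\U)$. Since quasi-amenable subsets are convex (as noted after Definition \ref{dfn:3}), the obvious candidate is the linear path $g_t := (1-t) g_0 + t g_1$, $t \in [0,1]$, which stays in $\cl(\U)$ because the coefficient bound $|\tilde g_{ij}| \le C$ defines a convex set and is preserved under closure. (One must be slightly careful: $\cl(\U) \subset \s$ may contain semimetrics on the boundary, where the $L^2$ integrand can still be made sense of, or one first proves the estimate for $g_0, g_1 \in \U$ and then passes to the limit using continuity of $d$ on $\overline{(\M,d)}$ together with $\omega$-convergence of linear interpolants — I expect the cleanest route is to work on $\U$ and extend at the end.) Then
\begin{equation*}
  d(g_0, g_1) \le L(g_t) = \integral{0}{1}{\norm{\dot g_t}_{g_t}}{dt}
  = \integral{0}{1}{\left( \integral{M}{}{\tr_{g_t}\big((g_1 - g_0)^2\big)}{d\mu_{g_t}} \right)^{1/2}}{dt}.
\end{equation*}

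The key step is a pointwise bound of the integrand. Fix $x \in M$ and work in the fixed amenable atlas. Write $h := g_1 - g_0$ and $g_t = (1-t)g_0 + t g_1$. I would bound $\tr_{g_t}(h^2) = (g_t)^{il}(g_t)^{jm} h_{ij} h_{lm}$ by noting that the operator norm of $g_t^{-1}$ (with respect to the fixed background $g$) is controlled: each $g_s \in \cl(\U)$ has minimal eigenvalue of $G_s = g^{-1} g_s$ bounded below — here is where I need the \emph{amenable}, not merely quasi-amenable, hypothesis, so at this point I would first pass to an amenable subset, or invoke the companion estimate; in fact Theorem \ref{thm:7} as stated only assumes quasi-amenable, so the correct mechanism must be that near a fixed pair $g_0, g_1 \in \cl(\U)$ one has a \emph{lower} bound on eigenvalues automatically from $\|g_0 - g_1\|_g < \delta$ and $g_0$ being a genuine metric — i.e. if $g_0$ is positive definite and $g_1$ is $L^2$-close, the interpolants $g_t$ cannot degenerate too badly on a large-measure set. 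Concretely, I would split $M$ into the region where $g_0$ is "uniformly nondegenerate" (eigenvalues $\ge \eta$) and a small-measure exceptional region, choosing $\eta$ so that the exceptional region has $\mu_g$-measure $< \epsilon'$; the coefficient bound $C$ then gives a uniform bound on $\mu_{g_t}$-density, so the exceptional region contributes $O(\epsilon')$ to the length regardless. On the good region, $\|g_t^{-1}\|$ is uniformly bounded, so $|\tr_{g_t}(h^2)|(x) \le \Lambda^2 |h_{ij}(x)|^2_{\mathrm{eucl}}$ and the volume density $\sqrt{\det g_t} \le C'$, whence
\begin{equation*}
  \integral{M}{}{\tr_{g_t}(h^2)}{d\mu_{g_t}} \le C'' \integral{M}{}{\sum_{i,j} h_{ij}(x)^2}{dx} + O(\epsilon') \le C''' \norm{h}_g^2 + O(\epsilon'),
\end{equation*}
using equivalence of the fixed $L^2$ norm with the coordinate $L^2$ norm on $\s$ over the (finite) amenable atlas.

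Combining, $d(g_0,g_1) \le (C''' \|g_0 - g_1\|_g^2 + O(\epsilon'))^{1/2}$, and choosing first $\epsilon'$ small (fixing the good/bad decomposition, which depends only on $g$ and the bound $C$) and then $\delta$ small gives $d(g_0, g_1) < \epsilon$. The main obstacle, and the point requiring the most care, is the eigenvalue degeneration: on a quasi-amenable set there is no uniform lower bound, so the argument genuinely needs the measure-theoretic splitting above — isolating a fixed small-measure "bad" set on which we only use the cheap upper volume bound, and getting uniform ellipticity only on its complement. A secondary technical point is legitimizing the length formula and the path $g_t$ when $g_0, g_1$ lie in the closure $\cl(\U)$ rather than in $\M$ itself; I would handle this by proving the inequality for interior points and extending by the continuity of $d$ established via Theorem \ref{thm:6} and $\omega$-convergence.
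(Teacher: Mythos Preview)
This theorem is quoted from \cite{clarke:_compl_of_manif_of_rieman_metric} and is not proved in the present paper, so there is no in-paper proof to compare against.  That said, your proposal as written has a genuine gap.

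The problematic step is your treatment of the exceptional region.  You write that ``the coefficient bound $C$ then gives a uniform bound on $\mu_{g_t}$-density, so the exceptional region contributes $O(\epsilon')$ to the length regardless.''  But the length integrand is $\tr_{g_t}(h^2)\sqrt{\det G_t}$, and it is precisely on the exceptional region that $g_t^{-1}$---and hence $\tr_{g_t}(h^2)$---blows up.  A bound on the volume density $\sqrt{\det G_t}$ together with small $\mu_g$-measure does not control $\int_{\mathrm{bad}} \tr_{g_t}(h^2)\sqrt{\det G_t}\,d\mu_g$.  Concretely, take $n=2$, $g_0=\mathrm{diag}(\lambda,1)$, $g_1=\left(\begin{smallmatrix}\lambda & a\\ a & 1\end{smallmatrix}\right)$ with $a^2=\lambda-\lambda^2$: one computes that the linear-path length on a region of fixed $\mu_g$-measure behaves like $\lambda^{1/4}$ while $\norm{g_1-g_0}_g$ behaves like $\lambda^{1/2}$, so the ratio is $\lambda^{-1/4}$ and no uniform $\delta$ depending only on $\U$ comes out of the linear path.

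There is also a uniformity problem you half-acknowledge: the set $\{x:\lambda^{G_0}_{\min}(x)<\eta\}$ depends on $g_0$, so the claim that the decomposition ``depends only on $g$ and the bound $C$'' is false; for $g_0=\mathrm{diag}(\lambda,1)$ with $\lambda<\eta$ the bad set is all of $M$.  (And even on the ``good'' set your one-sided lower bound $g_0\ge\eta$ only gives $g_t\ge(1-t)\eta$, which degenerates as $t\to 1$.)  The way the actual argument in \cite{clarke:_compl_of_manif_of_rieman_metric} gets around all of this is not to use the linear path on the near-degenerate region at all, but to handle that region by the mechanism behind Theorem~\ref{thm:8}: the near-degenerate region has small \emph{intrinsic} volume with respect to both metrics (since $\sqrt{\det G_i}$ is small there, uniformly in $g_0,g_1$ once the threshold is fixed), and the $d$-cost of modifying the metrics there is controlled by that volume, not by $\norm{g_0-g_1}_g$.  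Your proposal is missing exactly this ingredient.
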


\begin{proposition}[{\cite[Prop.~5.13]{clarke:_compl_of_manif_of_rieman_metric}}]\label{prop:5}
  Suppose $g_0 \in \U^0$ for some quasi-amenable subset $\U \subset
  \M$.  Then for any sequence $\{g_k\}$ in $\U$ that $L^2$-converges
  to $g_0$, $\{g_k\}$ is $d$-Cauchy and there exists a subsequence
  $\{g_{k_l}\}$ that $\omega$-converges to $g_0$.
\end{proposition}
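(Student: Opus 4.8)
The plan is to obtain the $d$-Cauchy property directly from Theorem~\ref{thm:7}, and then to build the $\omega$-convergent subsequence by a routine extraction argument from the $L^2$ convergence. First I would dispense with the $d$-Cauchy claim. Since $\{g_k\}$ converges to $g_0$ in $\norm{\cdot}_g$ it is in particular $L^2$-Cauchy, so given $\epsilon > 0$ we may take the $\delta > 0$ furnished by Theorem~\ref{thm:7} for the quasi-amenable set $\U$ and choose $N$ with $\norm{g_j - g_k}_g < \delta$ for all $j,k \geq N$; as $g_j, g_k \in \U \subset \cl(\U)$, Theorem~\ref{thm:7} gives $d(g_j, g_k) < \epsilon$. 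Note this step uses nothing about $g_0$, only that the sequence lies in $\U$.

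Next I would record that $[g_0]$ is a legitimate element of $\Mfhat$. Because the quadratic form $|\cdot|_g$ is uniformly equivalent on the compact manifold $M$ to the Euclidean norm of the coefficients in each chart of the fixed amenable atlas, $L^2$-convergence yields coefficient-wise $L^2$-convergence, and hence, after passing to a subsequence, $g_k(x) \to g_0(x)$ as tensors for a.e.\ $x \in M$. As an a.e.\ pointwise limit of positive-definite symmetric tensors, $g_0$ is (a.e.\ equal to) a measurable semimetric; and since the coefficients of each $g_k$ are bounded by the quasi-amenability constant $C$, so are those of $g_0$ a.e., whence $\det g_0$ is bounded and $\Vol(M, g_0) < \infty$ by compactness of $M$. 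Thus $g_0 \in \Mf$ and $[g_0] \in \Mfhat$.

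I would then extract the $\omega$-convergent subsequence. Starting from the a.e.-convergent subsequence just produced and using that the full sequence is $d$-Cauchy, pass to a further subsequence $\{g_{k_l}\}$ with $d(g_{k_l}, g_{k_{l+1}}) < 2^{-l}$; a.e.\ convergence $g_{k_l}(x) \to g_0(x)$ is inherited, so conditions~\ref{item:15} and~\ref{item:7} of Definition~\ref{dfn:8} hold, and~\ref{item:4} holds since a subsequence of a $d$-Cauchy sequence is $d$-Cauchy. For~\ref{item:14}, writing $G_0 := g^{-1} g_0$ and using continuity of the determinant together with $g_{k_l}(x) \to g_0(x)$ a.e., we get $\det G_{k_l}(x) \to \det G_0(x)$ for a.e.\ $x$, so for a.e.\ $x$ one has $x \in D_{\{g_{k_l}\}}$ iff $\det G_0(x) = 0$ iff $x \in X_{g_0}$; hence $X_{g_0}$ and $D_{\{g_{k_l}\}}$ differ by a nullset, and $g_{k_l} \xrightarrow{\omega} g_0$.

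I do not expect a real obstacle here: the genuine content of the proposition is packaged into Theorem~\ref{thm:7}, and what remains is measure-theoretic bookkeeping. The one point requiring a little care is that the subsequence must be chosen to satisfy all four conditions of $\omega$-convergence simultaneously, which is why the extraction is carried out in two stages—first a.e.\ convergence from $L^2$ convergence, then summability of consecutive $d$-distances from the Cauchy property—and these stages are compatible precisely because a.e.\ convergence persists under passage to further subsequences.
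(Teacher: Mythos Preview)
Your proposal is correct. The paper does not actually prove this proposition: it is quoted verbatim from \cite[Prop.~5.13]{clarke:_compl_of_manif_of_rieman_metric} and used as a black box, so there is no proof in the present paper to compare against. Your argument is the natural one---deduce the $d$-Cauchy property from Theorem~\ref{thm:7} applied to an $L^2$-Cauchy sequence in $\U$, then extract an a.e.-convergent subsequence from the $L^2$ convergence and refine it to achieve summability of consecutive $d$-distances---and each of the four conditions of Definition~\ref{dfn:8} is verified cleanly. The only cosmetic remark is that condition~\ref{item:15} asks for $g_{k_l}(x) \to g_0(x)$ on $M \setminus D_{\{g_{k_l}\}}$ specifically, while you have it a.e.\ on all of $M$; this is of course stronger, so nothing is missing.
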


Note that by the discussion of the completion of $\M$ in Section
\ref{sec:completion-m}, the Cauchy sequence $\{g_k\}$ in the above
proposition is equivalent to its $\omega$-convergent subsequence
$\{g_{k_l}\}$.  Thus, we have $\lim d(g_k, g_0) = \lim d(g_{k_l}, g_0)
= 0$, and we get the following corollary.

\begin{corollary}\label{cor:2}
  Suppose $g_0 \in \U^0$ for some quasi-amenable subset $\U \subset
  \M$.  Then for any sequence $\{g_k\}$ in $\U$ that $L^2$-converges
  to $g_0$, we have $g_k \xrightarrow{d} g_0$.
\end{corollary}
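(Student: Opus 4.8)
The plan is to derive Corollary \ref{cor:2} directly from Proposition \ref{prop:5} together with the elementary facts about Cauchy sequences and completions recalled in Section \ref{sec:completion-m}. The key observation is that once a sequence is known to be $d$-Cauchy, its $d$-limit is determined by \emph{any} subsequence, because a Cauchy sequence and each of its subsequences represent the same element of $\overline{\M}$.

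First I would invoke Proposition \ref{prop:5}: since $g_0 \in \U^0$ for the quasi-amenable subset $\U$, any sequence $\{g_k\} \subset \U$ with $\|g_k - g_0\|_g \to 0$ is $d$-Cauchy, and some subsequence $\{g_{k_l}\}$ satisfies $g_{k_l} \xrightarrow{\omega} g_0$. By the definition of $\omega$-convergence (Definition \ref{dfn:8}) and the identification of the completion in Theorem \ref{thm:6}, the sequence $\{g_{k_l}\}$ converges to $g_0$ as an element of $\overline{\M}$, i.e.\ $\lim_{l} d(g_{k_l}, g_0) = 0$. (Here I am using that $\Omega$ is a bijection and that $\omega$-subconvergence pins down the limit in $\Mfhat = \overline{\M}$, so $d(g_{k_l}, g_0) \to 0$ in the sense of the pseudometric $d$ extended to $\Mf$.)

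Next I would upgrade the convergence of the subsequence to convergence of the full sequence. Since $\{g_k\}$ is $d$-Cauchy, for any $\epsilon > 0$ there is $N$ with $d(g_j, g_k) < \epsilon/2$ for all $j, k \geq N$. Choosing $l$ large enough that $k_l \geq N$ and $d(g_{k_l}, g_0) < \epsilon/2$, the triangle inequality gives $d(g_k, g_0) \leq d(g_k, g_{k_l}) + d(g_{k_l}, g_0) < \epsilon$ for all $k \geq N$. Hence $g_k \xrightarrow{d} g_0$, which is exactly the claim.

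I do not anticipate a genuine obstacle here; the corollary is a routine consequence of Proposition \ref{prop:5}. The only point requiring a little care is the bookkeeping around what ``$d(g_{k_l}, g_0)$'' means when $g_0 \in \Mf$ rather than $\M$ — one must use the convention, set up at the end of Section \ref{sec:completion-m}, that $d$ on $\Mf$ is the pseudometric induced via $\Omega$ and $\omega$-convergence — but this is precisely the setup already in place, so no new work is needed.
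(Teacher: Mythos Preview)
Your proposal is correct and matches the paper's argument essentially line for line: the paper also invokes Proposition~\ref{prop:5} and then observes that the $d$-Cauchy sequence $\{g_k\}$ is equivalent (in $\overline{\M}$) to its $\omega$-convergent subsequence $\{g_{k_l}\}$, so that $\lim d(g_k, g_0) = \lim d(g_{k_l}, g_0) = 0$. Your explicit triangle-inequality step is just the unpacked version of this equivalence-of-subsequences remark.
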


\subsection{Properties of the metric $d$}\label{sec:prop-omega-conv}

We now turn to a review of results on the behavior of $d$ that were
established in \cite{clarke:_compl_of_manif_of_rieman_metric} and
\cite{clarke:_metric_geomet_of_manif_of_rieman_metric}.  We will also
need to extend some of these results to more general settings.

One extremely important aspect of the topology induced by $d$ on $\M$
is the fact that the volumes of measurable subsets behave
continuously, as the next theorem shows.

\begin{theorem}[{\cite[Thm.~4.20]{clarke:_compl_of_manif_of_rieman_metric}}]\label{thm:9}
  Let $\{g_k\}$ $\omega$-converge to $g_0 \in \Mf$, and let $Y
  \subseteq M$ be any measurable subset.  Then $\Vol(Y, g_k)
  \rightarrow \Vol(Y, g_0)$.
\end{theorem}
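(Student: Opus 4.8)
The plan is to reformulate the conclusion as an $L^1(M, \mu_g)$-convergence statement for the density functions $\sqrt{\det G_k}$, where $G_k = g^{-1} g_k$, since $\Vol(Y, g_k) = \integral{Y}{}{\sqrt{\det G_k}}{d \mu_g}$ and likewise for $g_0$. A.e.\ convergence of these densities comes essentially for free from $\omega$-convergence, and the real work is to produce enough uniform integrability to upgrade it to $L^1$-convergence; this is precisely where the $d$-Cauchy hypothesis (as opposed to merely pointwise convergence of the $g_k$) must be used.

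First I would establish the key estimate: for all $g_0, g_1 \in \M$ and every measurable $Y \subseteq M$,
\begin{equation*}
  \bigl| \sqrt{\Vol(Y, g_1)} - \sqrt{\Vol(Y, g_0)} \bigr| \leq \frac{\sqrt{n}}{4}\, d(g_0, g_1),
\end{equation*}
with a constant independent of $Y$. (If $\mu_g(Y) = 0$ the theorem is trivial, so assume $\mu_g(Y) > 0$, whence $\Vol(Y, g_t) > 0$ along any path in $\M$, avoiding division by zero below.) Given a smooth path $g_t$, $t \in [0,1]$, from $g_0$ to $g_1$, from $\mu_{g_t} = \sqrt{\det G_t}\, \mu_g$ and $\frac{d}{dt} \sqrt{\det G_t} = \tfrac{1}{2} \sqrt{\det G_t}\, \tr_{g_t}(\dot{g}_t)$ one computes
\begin{equation*}
  \frac{d}{dt} \sqrt{\Vol(Y, g_t)} = \frac{1}{4 \sqrt{\Vol(Y, g_t)}} \integral{Y}{}{\tr_{g_t}(\dot{g}_t)}{d \mu_{g_t}},
\end{equation*}
and Cauchy--Schwarz in $L^2(Y, \mu_{g_t})$ together with the pointwise bound $\bigl(\tr_{g_t}(\dot{g}_t)\bigr)^2 \leq n\, \abs{\dot{g}_t}_{g_t}^2$ yields $\bigl| \tfrac{d}{dt} \sqrt{\Vol(Y, g_t)} \bigr| \leq \tfrac{\sqrt{n}}{4} \norm{\dot{g}_t}_{g_t}$; integrating over $t$ and taking the infimum over paths gives the estimate.

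Next I would record that $\sqrt{\det G_k} \to \sqrt{\det G_0}$ $\mu_g$-a.e.\ on $M$: on $M \setminus D_{\{g_k\}}$ this follows from condition \ref{item:15} of $\omega$-convergence ($g_k(x) \to g_0(x)$), while on $D_{\{g_k\}}$ we have $\det G_k(x) \to 0$ by definition and $\det G_0(x) = 0$ a.e.\ because $X_{g_0}$ and $D_{\{g_k\}}$ agree up to a nullset (condition \ref{item:14}). Then, applying the key estimate with $Y$ replaced by an arbitrary measurable $E$, using that $\{g_k\}$ is $d$-Cauchy (condition \ref{item:4}) and that each $g_k$ is a genuine smooth metric (so $\sqrt{\det G_k}$ is bounded on the compact manifold $M$), I would deduce that $\{\sqrt{\det G_k}\}$ is uniformly integrable with respect to $\mu_g$: for $\epsilon > 0$, pick $N$ with $d(g_k, g_N)$ small for all $k \geq N$, so that $\sqrt{\Vol(E, g_k)} \leq \sqrt{\Vol(E, g_N)} + \tfrac{\sqrt{n}}{4} d(g_k, g_N)$ forces $\Vol(E, g_k)$ to be small once $\mu_g(E)$ is small, uniformly in $k \geq N$, with the finitely many remaining $k$ handled individually. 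Finally, uniform integrability together with a.e.\ convergence on the finite measure space $(M, \mu_g)$ gives, by the Vitali convergence theorem, that $\sqrt{\det G_k} \to \sqrt{\det G_0}$ in $L^1(M, \mu_g)$, and hence
\begin{equation*}
  \bigl| \Vol(Y, g_k) - \Vol(Y, g_0) \bigr| \leq \integral{M}{}{\bigl| \sqrt{\det G_k} - \sqrt{\det G_0} \bigr|}{d \mu_g} \longrightarrow 0
\end{equation*}
for every measurable $Y$, which is the claim (in fact uniformly in $Y$). The main obstacle is exactly this last upgrade: a.e.\ convergence by itself gives only $\Vol(Y, g_0) \leq \liminf_k \Vol(Y, g_k)$ via Fatou's lemma, and improving it to genuine convergence requires the uniform control furnished by the Lipschitz estimate for $Y \mapsto \sqrt{\Vol(Y, \cdot)}$, which is the one ingredient that genuinely sees the distance function $d$ rather than just pointwise behavior of the metrics.
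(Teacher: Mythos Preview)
Your argument is correct. Note, however, that the present paper does not supply its own proof of this theorem: it is quoted verbatim as \cite[Thm.~4.20]{clarke:_compl_of_manif_of_rieman_metric} and used as a black box, so there is no proof here to compare against. That said, your line of attack is the natural one and matches the ingredients the paper itself singles out elsewhere: the Lipschitz estimate $\bigl|\sqrt{\Vol(Y,g_1)}-\sqrt{\Vol(Y,g_0)}\bigr|\leq \tfrac{\sqrt{n}}{4}\,d(g_0,g_1)$ for smooth metrics is exactly \cite[Lemma~12]{clarke:_metric_geomet_of_manif_of_rieman_metric} (invoked in the proof of Lemma~\ref{lem:3}), and its uniformity in $Y$ is precisely what turns the $d$-Cauchy hypothesis into uniform integrability of the densities $\sqrt{\det G_k}$, after which Vitali's theorem finishes the job. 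Your observation that this actually yields convergence \emph{uniformly in $Y$} (equivalently, $L^1(M,\mu_g)$-convergence of the densities, i.e.\ uniform convergence of the measures $\mu_{g_k}$ to $\mu_{g_0}$) is a genuine strengthening of the stated conclusion; the paper only reaches that stronger statement later, as Corollary~\ref{cor:1}, via the machinery of $\Theta_M$.
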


Indeed, we have the following theorem, which extends \cite[Lemma
12]{clarke:_metric_geomet_of_manif_of_rieman_metric}.

\begin{lemma}\label{lem:3}
  Let $g_0, g_1 \in \Mf$.  Then for any measurable subset $Y \subseteq
  M$,
  \begin{equation*}
    \left| \sqrt{\Vol(Y,g_1)} - \sqrt{\Vol(Y,g_0)} \right| \leq \frac{\sqrt{n}}{4} d(g_0,g_1).
  \end{equation*}
\end{lemma}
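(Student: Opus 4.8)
The plan is to reduce the statement about the completion metric $d$ to a statement about the $L^2$ metric on smooth metrics, where we can work with explicit geodesics. First I would observe that it suffices to prove the inequality for $g_0, g_1 \in \M$ smooth: once that is known, given arbitrary $g_0, g_1 \in \Mf$, choose sequences $\{g_0^k\}, \{g_1^k\}$ in $\M$ that $\omega$-converge to $g_0$ and $g_1$ respectively; then $d(g_0^k, g_1^k) \to d(g_0, g_1)$ by definition of the induced metric on $\Mf$, while $\Vol(Y, g_0^k) \to \Vol(Y, g_0)$ and $\Vol(Y, g_1^k) \to \Vol(Y, g_1)$ by Theorem~\ref{thm:9}, so the inequality passes to the limit.

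For the smooth case, the key point is that the volume functional $g \mapsto \sqrt{\Vol(Y, g)}$ is Lipschitz along $L^2$-paths with constant $\sqrt{n}/4$. Concretely, let $g_t$, $t \in [0,1]$, be any smooth path in $\M$ from $g_0$ to $g_1$. Differentiating $\Vol(Y, g_t) = \int_Y d\mu_{g_t}$ under the integral sign and using the standard formula $\partial_t (d\mu_{g_t}) = \tfrac12 \tr_{g_t}(\dot g_t)\, d\mu_{g_t}$, I would get
\begin{equation*}
  \frac{d}{dt} \sqrt{\Vol(Y, g_t)} = \frac{1}{4\sqrt{\Vol(Y, g_t)}} \integral{Y}{}{\tr_{g_t}(\dot g_t)}{d\mu_{g_t}}.
\end{equation*}
Now estimate $|\tr_{g_t}(\dot g_t)|$ pointwise by $\sqrt{n}\,\abs{\dot g_t}_{g_t}$ (Cauchy--Schwarz applied to $\langle \dot g_t, g_t\rangle_{g_t}$, since $\abs{g_t}_{g_t} = \sqrt{\tr_{g_t}(\mathrm{Id})} = \sqrt{n}$), then apply Cauchy--Schwarz on $L^2(Y, \mu_{g_t})$ to bound $\int_Y \abs{\dot g_t}_{g_t}\, d\mu_{g_t} \leq \sqrt{\Vol(Y,g_t)} \cdot \bigl(\int_Y \abs{\dot g_t}_{g_t}^2\, d\mu_{g_t}\bigr)^{1/2} \leq \sqrt{\Vol(Y,g_t)}\cdot \norm{\dot g_t}_{g_t}$. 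The $\sqrt{\Vol(Y, g_t)}$ factors cancel, leaving $\bigl|\tfrac{d}{dt}\sqrt{\Vol(Y, g_t)}\bigr| \leq \tfrac{\sqrt n}{4}\norm{\dot g_t}_{g_t}$. Integrating over $t \in [0,1]$ gives $\bigl|\sqrt{\Vol(Y,g_1)} - \sqrt{\Vol(Y,g_0)}\bigr| \leq \tfrac{\sqrt n}{4} L(g_t)$, where $L(g_t)$ is the $L^2$-length of the path; taking the infimum over all smooth paths yields the bound with $d(g_0, g_1)$.

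The main obstacle I anticipate is the differentiation-under-the-integral step when $\Vol(Y, g_t)$ is near zero, since there $\sqrt{\Vol(Y,g_t)}$ is not differentiable. This is handled by first proving the cleaner inequality $\bigl|\sqrt{\Vol(Y,g_1)} - \sqrt{\Vol(Y,g_0)}\bigr|\le \tfrac{\sqrt n}{4}L(g_t)$ only on the open set where $\Vol(Y, g_t) > 0$ and then using continuity of $t \mapsto \Vol(Y, g_t)$ to extend across the (closed) set of zeros — or, more simply, by working with $\sqrt{\Vol(Y, g_t) + \varepsilon}$ and letting $\varepsilon \to 0$ at the end, which makes every integrand smooth and the estimate uniform in $\varepsilon$. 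A secondary technical point is that for $g_0, g_1 \in \M$ the infimum of $L^2$-lengths over smooth paths genuinely equals $d(g_0, g_1)$; this is the definition of the Riemannian distance function and requires no extra argument. Since \cite[Lemma 12]{clarke:_metric_geomet_of_manif_of_rieman_metric} is cited as the case $Y = M$, essentially the same computation should work verbatim with $M$ replaced by $Y$ throughout.
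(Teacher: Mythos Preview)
Your proposal is correct and follows essentially the same route as the paper: reduce to smooth metrics via $\omega$-convergent approximating sequences, invoke Theorem~\ref{thm:9} for the volumes and the definition of $d$ on $\Mf$ for the distance, and use the smooth-metric inequality (which the paper simply cites as \cite[Lemma~12]{clarke:_metric_geomet_of_manif_of_rieman_metric}, while you re-derive it via the standard path-length argument). One small remark: the obstacle you flag does not actually occur here, since for $g_t \in \M$ the measure $\mu_{g_t}$ is equivalent to $\mu_g$, so $\Vol(Y, g_t) = 0$ only when $Y$ is a nullset, in which case the inequality is trivial.
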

\begin{proof}
  Let $\{g_0^k\}$ and $\{g_1^k\}$ be any sequences in $\M$ that
  $\omega$-converge to $g_0$ and $g_1$, respectively.  By \cite[Lemma
  12]{clarke:_metric_geomet_of_manif_of_rieman_metric}
  and Theorem \ref{thm:9}, we have
  \begin{align*}
    \left| \sqrt{\Vol(Y,g_1)} - \sqrt{\Vol(Y,g_0)} \right| &= \lim_{k
      \rightarrow \infty} \left| \sqrt{\Vol(Y,g_1^k)} - \sqrt{\Vol(Y,g_0^k)} \right| \\
    &\leq \lim_{k \rightarrow \infty} \frac{\sqrt{n}}{4}
    d(g_0^k,g_1^k) \\
    &= \frac{\sqrt{n}}{4} d(g_0,g_1).
  \end{align*}
\end{proof}

By the last theorem, the difference in the volumes of a given subset
bounds the distance in $d$ from below.  Surprisingly, we also have the
following result, which bounds the distance between two metrics based
on the volume of the subset on which they differ.

\begin{theorem}[{\cite[Thm.~4.34]{clarke:_compl_of_manif_of_rieman_metric}}]\label{thm:8}
  Let $\U$ be any amenable subset with $L^2$-closure $\U^0$.  Suppose
  that $g_0, g_1 \in \U^0$, and let $E := \carr (g_1 - g_0) = \{ x \in
  M \mid g_0(x) \neq g_1(x) \}$.  Then there exists a constant $C(n)$
  depending only on $n = \dim M$ such that
  \begin{equation*}
    d (g_0, g_1) \leq C(n) \left( \sqrt{\Vol(E, g_0)} +
      \sqrt{\Vol(E,g_1)} \right).
  \end{equation*}
  In particular, $C(n)$ does not depend on $g_0$, $g_1$, or $\U$, and
  we have
  \begin{equation*}
    \diam \left( \{ \tilde{g} \in \U^0 \mid \Vol(M, \tilde{g}) \leq
      K \} \right) \leq 2 C(n) \sqrt{K}.
  \end{equation*}
\end{theorem}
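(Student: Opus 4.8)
\section*{Proof proposal for Theorem~\ref{thm:8}}

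The plan is to bound $d(g_0, g_1)$ from above by a concrete three-segment path, chosen so that its total length is controlled purely by $\Vol(E, g_0)$ and $\Vol(E, g_1)$ with a constant depending only on $n$. The guiding observation is that \emph{deflating} a metric conformally toward zero on $E$, while leaving it fixed on $M \setminus E$, costs only a multiple of $\sqrt{\Vol(E, \cdot)}$; once both $g_0$ and $g_1$ have been shrunk to nearly degenerate metrics on $E$, they agree off $E$ and can be joined at vanishing cost. Concretely, set $h := g_1 - g_0$ (which vanishes off $E$ by definition of $E$), and for $\epsilon \in (0,1)$ let $g_0^\epsilon$ be the metric equal to $\epsilon^{4/n} g_0$ on $E$ and to $g_0$ on $M \setminus E$, with $g_1^\epsilon$ defined analogously. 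Since the length of any path bounds $d$ from above, the triangle inequality gives
\[
  d(g_0, g_1) \le d(g_0, g_0^\epsilon) + d(g_0^\epsilon, g_1^\epsilon) + d(g_1^\epsilon, g_1),
\]
and I would estimate each term and let $\epsilon \to 0$.

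For the two outer segments I would use the explicit conformal geodesics of Proposition~\ref{prop:8}. Choosing the initial tangent $\rho g_0$ with $\rho = -\tfrac{4}{n}(1-\epsilon)$ on $E$ and $\rho = 0$ on $M \setminus E$ produces the geodesic $g_t = (1 - t(1-\epsilon))^{4/n} g_0$ on $E$ (constant off $E$), which reaches $g_0^\epsilon$ at $t = 1$ and stays positive definite throughout since $1 - t(1-\epsilon) \in [\epsilon, 1]$. The length of a geodesic on $[0,1]$ equals the norm of its initial velocity, so using the identity $\tr_{g_0}(g_0 g_0) = n$ I compute
\[
  d(g_0, g_0^\epsilon) \le \sqrt{(\rho g_0, \rho g_0)_{g_0}} = \frac{4}{\sqrt n}(1-\epsilon)\sqrt{\Vol(E, g_0)} \le \frac{4}{\sqrt n}\sqrt{\Vol(E, g_0)},
\]
and symmetrically $d(g_1^\epsilon, g_1) \le \tfrac{4}{\sqrt n}\sqrt{\Vol(E, g_1)}$.

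For the middle segment I would take the straight line from $g_0^\epsilon$ to $g_1^\epsilon$, which on $E$ equals $\epsilon^{4/n}\, \tilde g_t$ with $\tilde g_t := (1-t) g_0 + t g_1$ and is constant off $E$; its velocity on $E$ is $\epsilon^{4/n} h$. The decisive point is a scaling cancellation: since $\tr_{\epsilon^{4/n}\tilde g_t}$ contributes a factor $\epsilon^{-8/n}$ and the squared velocity contributes $\epsilon^{8/n}$, while the volume form contributes $\mu_{\epsilon^{4/n}\tilde g_t} = \epsilon^2 \mu_{\tilde g_t}$, the speed collapses to $\epsilon \sqrt{(h, h)_{\tilde g_t}}$. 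Crucially, $\tilde g_t \in \U^0$ by convexity (Definition~\ref{dfn:3}), so the amenable \emph{lower} eigenvalue bound $\zeta$ controls the integrand $\tr_{\tilde g_t}(h h)$ from above while the upper bound $C$ controls $\mu_{\tilde g_t}$; hence $B := \int_0^1 \sqrt{(h,h)_{\tilde g_t}}\, dt$ is finite and independent of $\epsilon$, giving $d(g_0^\epsilon, g_1^\epsilon) \le \epsilon B \to 0$. This is exactly where amenability is needed rather than mere quasi-amenability. Combining the three bounds and sending $\epsilon \to 0$ yields the theorem with $C(n) = 4/\sqrt n$. The diameter bound follows at once: if $\Vol(M, g_0), \Vol(M, g_1) \le K$ then $\Vol(E, g_0), \Vol(E, g_1) \le K$, so $d(g_0, g_1) \le 2 C(n)\sqrt K$.

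The main obstacle is regularity. The deflating factor $\rho$ is a multiple of the characteristic function of $E$---not smooth---and the endpoints $g_0, g_1 \in \U^0$ are in general only measurable metrics satisfying the amenable eigenvalue bounds, so all of the paths above live in the completion rather than in $\M$. To make the length estimates rigorous I would approximate $\rho$, $E$, and the endpoints by smooth data in $\U$ that converges in $\normdot_g$, run the computations above for the smooth approximants (where Proposition~\ref{prop:8} applies verbatim and the lengths are genuine path lengths in $\M$), and then pass to the limit using Theorem~\ref{thm:7} and Corollary~\ref{cor:2}, which promote $L^2$-convergence inside a (quasi-)amenable set to $d$-convergence. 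The delicate bookkeeping is to ensure that under this approximation the relevant sets and the volumes $\Vol(E, g_0)$, $\Vol(E, g_1)$ converge as needed; here Lemma~\ref{lem:3}, which controls $\sqrt{\Vol}$ by $d$, is the natural tool.
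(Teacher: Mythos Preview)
This theorem is quoted in the present paper from \cite[Thm.~4.34]{clarke:_compl_of_manif_of_rieman_metric} and is \emph{not} proved here; it is used as a black box (for instance in the proof of Proposition~\ref{prop:1}). So there is no proof in this paper to compare your proposal against.

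That said, your three-segment strategy is essentially the right one and your computations are correct: the conformal geodesics of Proposition~\ref{prop:8} have constant speed $\sqrt{n}\,\|\rho\|_{L^2(g_0)}$, which with $\rho = -\tfrac{4}{n}(1-\epsilon)\chi(E)$ gives exactly $\tfrac{4}{\sqrt n}(1-\epsilon)\sqrt{\Vol(E,g_0)}$; the scaling cancellation in the middle segment is also right, and the amenable lower eigenvalue bound is precisely what makes $B$ finite. Your explicit constant $C(n)=4/\sqrt n$ is a bonus.

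One caution on the approximation step you outline at the end. You invoke Theorem~\ref{thm:7} and Corollary~\ref{cor:2} to pass smooth approximants to the limit in $d$. In the source paper these are Theorem~5.12 and Proposition~5.13, which appear \emph{after} Theorem~4.34; you should check that their proofs there do not rely on Theorem~4.34, or else you have a circularity. A safer route for the approximation is to work entirely with path lengths: approximate $g_0,g_1$ by smooth elements of $\U$ and $\chi(E)$ by smooth cutoffs $\varphi_k$ with $0\le\varphi_k\le 1$, run the three smooth paths in $\M$, and bound $d(g_0,g_1)$ directly by the limsup of the resulting lengths (using only that $d$ is defined as an infimum of path lengths and that lengths behave well under $L^2$-convergence of the data inside an amenable set). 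This avoids appealing to any posterior structural results about $d$.
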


Thus, the metrics $g_0$ and $g_1$ can differ \emph{arbitrarily} on the
subset $E$, and still their distance from one another will be
uniformly bounded by the intrinsic volume of $E$.  One consequence of
this is that metrics with very small volume are close with respect to
$d$, despite the fact that they may be geometrically very different.
For example, a torus with latitudinal radius large and longitudinal
radius small---a wide, thin torus, geometrically almost a circle---has
small distance from a torus with both radii small---geometrically
almost a point.

Another consequence of Theorem \ref{thm:8}, which we mention in
passing purely for its intrinsic interest, concerns certain well-known
subspaces of metrics in $\M$.

\begin{corollary}\label{cor:3}
  With respect to $d$, the following submanifolds of $\M$ lie within a
  bounded region:
  \begin{enumerate}
  \item\label{item:9} For $\mu$ a smooth volume form on $M$, the
    submanifold $\Mmu$ of metrics inducing the volume form $\mu$.
  \item\label{item:10} For $\tilde{g} \in \M$ any metric, the orbit of
    $\tilde{g}$ under the action (by pull-back) of the diffeomorphism
    group of $M$.
  \item\label{item:11} For $\lambda > 0$ any number, the submanifold
    $\M_\lambda$ of metrics having total volume $\lambda$.
  \item\label{item:12} For $\lambda > 0$, the submanifold
    $\M_\lambda^0$ of metrics having total volume less than $\lambda$.
  \item\label{item:13} If the base manifold $M$ is a surface of genus
    $p \geq 2$, the submanifold $\Mhyp$ of hyperbolic metrics on $M$
    (having constant Gaussian curvature $-1$).
  \end{enumerate}

  Furthermore, since $\M \cong \M_\lambda \times \R_{> 0}$, we have
  that $\M$ is diffeomorphic to the product of a $d$-bounded subset
  with $\R_{> 0}$.
\end{corollary}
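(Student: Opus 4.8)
The plan is to reduce all five cases to one statement: for every $\lambda > 0$, the subset $\{\tilde g \in \M \mid \Vol(M, \tilde g) \le \lambda\}$ has $d$-diameter at most $2C(n)\sqrt\lambda$, where $C(n)$ is the universal constant of Theorem \ref{thm:8}. Indeed, granting this, each listed submanifold lies inside such a sublevel set: the metrics of total volume less than $\lambda$, and those of total volume exactly $\lambda$, trivially; $\Mmu$ because every metric inducing the fixed volume form $\mu$ has total volume $\mu(M)$; the $\D$-orbit of a metric $\tilde g$ because pullback by a diffeomorphism preserves total volume; and $\Mhyp$ because, by Gauss--Bonnet, a hyperbolic metric on a closed orientable surface of genus $p$ has area $-2\pi\chi(M) = 4\pi(p-1)$ regardless of the metric.

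To prove the reduced statement, fix $g_0, g_1 \in \M$ with $\Vol(M, g_i) \le \lambda$. The subtle point is that this sublevel set is \emph{not} contained in $\U^0$ for any single amenable subset $\U$ (a metric of small total volume can still be enormous in the sup norm), so the diameter estimate at the end of Theorem \ref{thm:8} does not apply directly. However, the \emph{pair} $\{g_0, g_1\}$ does lie in such a set: since $M$ is compact, $g_0$ and $g_1$ are smooth, and we work in a fixed amenable atlas (Remark \ref{rmk:1}), there is a constant $C > 0$ with $|(g_0)_{ij}|, |(g_1)_{ij}| \le C$ everywhere on $M$, and a constant $\zeta > 0$ with $\lambda^{G_0}_{\min}, \lambda^{G_1}_{\min} \ge \zeta$ everywhere (the minimal eigenvalue of $g^{-1}g_i$ is a positive continuous function on a compact manifold). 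Letting $\U$ be the amenable subset determined by $C$ and $\zeta$ as in Definition \ref{dfn:3}, we have $g_0, g_1 \in \U \subseteq \U^0$. Applying the first inequality of Theorem \ref{thm:8} with $E = \carr(g_1 - g_0) \subseteq M$, and using that its constant $C(n)$ depends on neither the metrics nor $\U$ together with $\Vol(E, g_i) \le \Vol(M, g_i) \le \lambda$, we obtain $d(g_0, g_1) \le C(n)\big(\sqrt{\Vol(E, g_0)} + \sqrt{\Vol(E, g_1)}\big) \le 2C(n)\sqrt\lambda$. As $g_0, g_1$ were arbitrary in the sublevel set, the diameter bound follows.

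For the final assertion, I would combine the bound just obtained for $\M_\lambda$ with the standard rescaling diffeomorphism $\M \cong \M_\lambda \times \R_{> 0}$, which sends $\tilde g$ to the pair consisting of its rescaling $(\lambda / \Vol(M, \tilde g))^{2/n}\, \tilde g$ to total volume $\lambda$ together with the number $\Vol(M, \tilde g)$, and has inverse $(\hat g, w) \mapsto (w / \lambda)^{2/n} \hat g$; since $\M_\lambda$ is $d$-bounded, $\M$ is diffeomorphic to the product of a $d$-bounded subset with $\R_{> 0}$.

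The only genuine obstacle is the observation made in the second paragraph: one must choose the amenable subset $\U$ depending on the particular pair $g_0, g_1$, rather than trying to cover an entire volume sublevel set by a single $\U^0$; this is legitimate precisely because the constant $C(n)$ of Theorem \ref{thm:8} is universal. Everything else --- the reductions, Gauss--Bonnet, and the rescaling diffeomorphism --- is routine bookkeeping.
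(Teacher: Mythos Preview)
Your proof is correct and follows essentially the same approach as the paper: reduce all five cases to a uniform volume bound and invoke Theorem~\ref{thm:8}, together with Gau\ss--Bonnet for case~\eqref{item:13}. You are in fact more careful than the paper, which simply says ``the result is implied by Theorem~\ref{thm:8}'' without addressing the point you raise---that the diameter estimate there is stated only for metrics in a fixed $\U^0$---whereas you correctly note that one chooses $\U$ depending on the pair $g_0, g_1$ and that this is harmless because $C(n)$ is universal.
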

\begin{proof}
  The manifolds \eqref{item:9}--\eqref{item:11} clearly consist of
  metrics all having the same volume, and \eqref{item:12} consists of
  metrics with volume bounded above by $\lambda$.  Furthermore, by the
  Gauß--Bonnet formula, a hyperbolic metric on a surface has total
  volume equal to $4 \pi (p - 1)$.  Therefore \ref{item:13} also
  consists of metrics all having the same volume.  So the result is
  implied by Theorem \ref{thm:8}.
\end{proof}

With this digression into curiosities out of the way, we return to
establishing the results we need later.  The next proposition extends
Theorem \ref{thm:8} to the entire completion of $\M$.

In the proof of this proposition, and for the remainder of the paper,
we denote the characteristic function of any set $E \subseteq M$ by
$\chi(E)$.

\begin{proposition}\label{prop:1}
  Let $g_0, g_1 \in \Mf$ and $A := \carr(g_1 - g_0)$.  Then
  \begin{equation*}
    d(g_0, g_1) \leq C(n)
    \left(
      \sqrt{\Vol(A, g_0)} + \sqrt{\Vol(A, g_1)}
    \right),
  \end{equation*}
  where $C(n)$ is the same constant as in Theorem \ref{thm:8}.
\end{proposition}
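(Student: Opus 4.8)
The plan is to estimate $d(g_0, g_1)$ by approximating $g_0$ and $g_1$ from within by metrics that lie in $L^2$-closures of quasi-amenable subsets, that \emph{agree with one another off} $A$, and that recover $g_0$ and $g_1$ in the limit; the bound then drops out of Theorem~\ref{thm:8} by a limiting argument. This needs two preliminary extensions of earlier results. The first is a \emph{quasi}-amenable version of Theorem~\ref{thm:8}: if $\U$ is quasi-amenable and $g_0, g_1 \in \U^0$, then still $d(g_0, g_1) \le C(n)(\sqrt{\Vol(E, g_0)} + \sqrt{\Vol(E, g_1)})$ with $E = \carr(g_1 - g_0)$ and the same constant. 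To prove it I would fix $\epsilon \in (0,1]$ and set $g_i^\epsilon := g_i + \epsilon g$; since $\U^0$-membership forces $g_0, g_1$ to be bounded, the eigenvalues of $G_i = g^{-1} g_i$ are bounded above, so those of $G_i + \epsilon I$ lie between $\epsilon$ and a fixed constant, and hence $g_0^\epsilon, g_1^\epsilon$ lie in the $L^2$-closure of a common \emph{amenable} subset. As $\carr(g_1^\epsilon - g_0^\epsilon) = E$, Theorem~\ref{thm:8} bounds $d(g_0^\epsilon, g_1^\epsilon)$ with a constant independent of $\epsilon$. Letting $\epsilon \to 0$, dominated convergence gives $\Vol(E, g_i^\epsilon) \to \Vol(E, g_i)$ (the densities $\sqrt{\det(G_i + \epsilon I)}$ are uniformly bounded precisely because $G_i$ is), and Theorem~\ref{thm:7} together with Corollary~\ref{cor:2} gives $d(g_i, g_i^\epsilon) \to 0$; the triangle inequality then finishes this step.

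The second ingredient concerns truncations. For $g_0 \in \Mf$, let $g_0^{(N)}$ be obtained by capping the eigenvalues of $G_0 = g^{-1} g_0$ at $N$ (eigenspaces unchanged, indices lowered again with $g$); then $g_0^{(N)}$ is bounded, so it lies in $\U_N^0$ for some quasi-amenable $\U_N$, and the crucial point is that $d(g_0^{(N)}, g_0) \to 0$. For $N < M$ one has $\carr(g_0^{(M)} - g_0^{(N)}) = F_N := \{x \in M \mid \lambda_{\max}(G_0(x)) > N\}$ and $\Vol(F_N, g_0^{(K)}) \le \Vol(F_N, g_0)$ for every $K$, since downward truncation of eigenvalues only shrinks the volume density; because the $F_N$ decrease to a $\mu_g$-nullset and $\sqrt{\det G_0}$ is $\mu_g$-integrable, $\Vol(F_N, g_0) \to 0$, so the first ingredient makes $\{g_0^{(N)}\}$ $d$-Cauchy. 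To see the limit is $g_0$, I would observe that $g_0^{(N)}(x) \to g_0(x)$ for a.e.\ $x$ and that $D_{\{g_0^{(N)}\}}$ agrees with $X_{g_0}$ up to a nullset; approximating each $g_0^{(N)}$ in $L^2$ by smooth metrics (Corollary~\ref{cor:2}), diagonalizing, and passing to a subsequence produces a $d$-Cauchy sequence in $\M$ that $\omega$-converges to $g_0$ in the sense of Definition~\ref{dfn:8}, so Theorem~\ref{thm:6} identifies its $d$-limit, hence that of $\{g_0^{(N)}\}$, as $g_0$. The same applies to $g_1$.

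The proof then concludes quickly: since $G_0 = G_1$ off $A$, the truncations $g_0^{(N)}$ and $g_1^{(N)}$ agree there, so $\carr(g_1^{(N)} - g_0^{(N)}) \subseteq A$, and both lie in $\U_N^0$. The quasi-amenable estimate together with $\Vol(A, g_i^{(N)}) \le \Vol(A, g_i)$ (downward truncation once more) gives, for every $N$,
\begin{equation*}
  d(g_0^{(N)}, g_1^{(N)}) \le C(n)\left(\sqrt{\Vol(A, g_0)} + \sqrt{\Vol(A, g_1)}\right),
\end{equation*}
and letting $N \to \infty$ while using the triangle inequality and $d(g_i^{(N)}, g_i) \to 0$ yields the proposition with the constant $C(n)$ of Theorem~\ref{thm:8}.

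I expect the main obstacle to be the second ingredient---proving $d(g_0^{(N)}, g_0) \to 0$. The Cauchy property is immediate from the quasi-amenable estimate, but identifying the limit forces one back into the $\omega$-convergence framework of \cite{clarke:_compl_of_manif_of_rieman_metric}, and since the $g_0^{(N)}$ are only measurable (and can degenerate on $X_{g_0}$), care is needed to reconcile them with Definition~\ref{dfn:8}, which is stated for sequences in $\M$. It is also worth noting that the \emph{order} of the two approximations---first truncate from above into a quasi-amenable closure, only then perturb by $\epsilon g$ into an amenable closure---is essential: merging them into a single truncation onto a bounded, non-degenerate eigenvalue range would make $\Vol(A, g_i^{(N)})$ blow up on $X_{g_i}$ once $\dim M \ge 3$.
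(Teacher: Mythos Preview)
Your argument is correct but takes a more elaborate route than the paper's. The paper avoids both of your preliminary ingredients by a single, simpler approximation: it sets
\[
  E_\alpha^k := \{x \mid \lambda^{G_\alpha}_{\min}(x) \ge 1/k,\ |(g_\alpha)_{ij}(x)| \le k\},\qquad
  g_\alpha^k := \chi(E_\alpha^k)\,g_\alpha + \chi(M\setminus E_\alpha^k)\,g,
\]
i.e.\ it replaces $g_\alpha$ by the reference metric $g$ on the bad set rather than spectrally truncating. This places $g_0^k, g_1^k$ directly in the $L^2$-closure of a common \emph{amenable} subset, so Theorem~\ref{thm:8} applies immediately---no quasi-amenable extension is needed. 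One checks $\carr(g_1^k - g_0^k)\subseteq A$ (off $A$ one has $g_0=g_1$, hence $E_0^k=E_1^k$ there), and then shows $g_\alpha^k \xrightarrow{\omega} g_\alpha$: the $d$-Cauchy property comes from a second application of Theorem~\ref{thm:8} to $g_\alpha^k, g_\alpha^{k+l}$ on the shrinking sets $E_\alpha^{k+l}\setminus E_\alpha^k$, and the remaining $\omega$-conditions are routine. Theorem~\ref{thm:9} then gives $\Vol(A,g_\alpha^k)\to\Vol(A,g_\alpha)$, and one passes to the limit.

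What your approach buys is a clean conceptual separation (tame degeneracy with $+\epsilon g$, tame unboundedness with eigenvalue capping) and the monotonicity $\Vol(A,g_i^{(N)})\le\Vol(A,g_i)$, which lets you conclude without a volume-convergence argument. What the paper's approach buys is that it skips the quasi-amenable lemma entirely and makes the limit identification more direct, since its approximants already sit in amenable closures where the $\omega$-machinery of \cite{clarke:_compl_of_manif_of_rieman_metric} applies as stated. The obstacle you flag---reconciling measurable approximants with Definition~\ref{dfn:8}---is real and is precisely what the paper also has to handle; its ``replace by $g$'' construction just makes this step shorter. One small point: your appeal to Theorem~\ref{thm:7} and Corollary~\ref{cor:2} for $d(g_i,g_i^\epsilon)\to 0$ needs a sentence of unpacking, since those are stated for smooth metrics in $\U$ or $\cl(\U)$, not for general elements of $\U^0$; it works after inserting smooth $L^2$-approximants to $g_i$ and $g_i^\epsilon$.
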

\begin{proof}
  First, for $\alpha = 0, 1$, define
  \begin{gather*}
    E_\alpha^k := \left\{ x \midmid \lambda^{G_\alpha}_{\min} \geq
      \frac{1}{k}\ \textnormal{and}\ \abs{(g_\alpha)_{ij}(x)} \leq k;\
      \alpha
      = 0,1 \right\}, \\
    g^k_\alpha := \chi(E_\alpha^k) g_\alpha+ \chi(M \setminus
    E_\alpha^k) g.
  \end{gather*}
  Then there exists an amenable subset $\U_k$ such that $\U_k^0$
  contains both $g_0^k$ and $g_1^k$.  Furthermore,
  \begin{equation*}
    \carr(g_1^k - g_0^k) \subseteq A \cup
    \left(
      \left(
        E_0^k \cup E_1^k
      \right) \setminus
      \left(
        E_0^k \cap E_1^k
      \right)
    \right)
  \end{equation*}
  since we have only modified $g_\alpha$ on $E_\alpha^k$, and on
  $E_0^k \cap E_1^k$, $g_0^k = g = g_1^k$.  But we also have that if
  $x \notin A$, then $g_0(x) = g_1(x)$, so in this case $x \in E_0^k$
  if and only if $x \in E_1^k$.  In other words, $x \notin A$ implies
  that either $x \in E_0^k \cap E_1^k$ or $x \notin E_0^k \cup E_1^k$.
  From this we see that $( E_0^k \cup E_1^k ) \setminus ( E_0^k \cap
  E_1^k ) \subseteq A$, implying that $\carr(g_1^k - g_0^k) \subseteq
  A$.

  Thus, by Theorem \ref{thm:8}, we have
  \begin{equation}\label{eq:2}
    d(g_0^k, g_1^k) \leq C(n)
    \left(
      \sqrt{\Vol(A, g_0^k)} + \sqrt{\Vol(A, g_1^k)}
    \right).
  \end{equation}
  If we can now show that $g^k_\alpha \xrightarrow{\omega} g_\alpha$
  for $\alpha = 0, 1$, then $\Vol(A, g^k_\alpha) \rightarrow \Vol(A,
  g_\alpha)$ by Theorem \ref{thm:9}.  This, together with
  (\ref{eq:2}), would give the result by taking the limit of both
  sides of the inequality.

  Since $\chi(E_\alpha^k)$ converges a.e.~to $\chi(M \setminus
  X_{g_\alpha})$ as $k \rightarrow \infty$ (recall that $X_{g_\alpha}$
  is the deflated set of $g_\alpha$), all the conditions for
  $g^k_\alpha$ to $\omega$-converge to $g_\alpha$ are clear, except
  that we must verify that $\{g^k_\alpha\}$ is a $d$-Cauchy sequence.

  Now, if $k, l \in \N$, we easily see that $E_\alpha^k \subseteq
  E_\alpha^{k+l}$, and that $g^k_\alpha$ and $g^{k+l}_\alpha$ only
  differ on $E_\alpha^{k+l} \setminus E_\alpha^k$.  Furthermore, the
  amenable subset $\U_{k+l}$ can clearly be chosen such that
  $g^k_\alpha \in \U_{k+l}$.  Thus, using Theorem \ref{thm:8} again,
  we see
  \begin{equation*}
    d(g^k_\alpha, g^{k+l}_\alpha) \leq C(n)
    \left(
      \sqrt{\Vol(E_\alpha^{k+l} \setminus E_\alpha^k, g^k_\alpha)} + \sqrt{\Vol(E_\alpha^{k+l}
        \setminus E_\alpha^k, g^{k+l}_\alpha)}
    \right).
  \end{equation*}
  Since on $E_\alpha^{k+l} \setminus E_\alpha^k$, we have $g^k_\alpha
  = g$ and $g^{k+l}_\alpha = g_\alpha$, we can rewrite the above
  inequality as
  \begin{equation}\label{eq:5}
    d(g^k_\alpha, g^{k+l}_\alpha) \leq C(n)
    \left(
      \sqrt{\Vol(E_\alpha^{k+l} \setminus E_\alpha^k, g)} + \sqrt{\Vol(E_\alpha^{k+l}
        \setminus E_\alpha^k, g_\alpha)}
    \right).
  \end{equation}

  Next, note that by using the fact that $E_\alpha^{k+l} \subseteq M
  \setminus X_{g_\alpha}$, we can estimate
  \begin{equation}\label{eq:6}
    \Vol(E_\alpha^{k+l} \setminus E_\alpha^k, g_\alpha) \leq \Vol((M
    \setminus X_{g_\alpha}) \setminus E_\alpha^k,
    g_\alpha)
  \end{equation}
  and
  \begin{equation}\label{eq:40}
    \Vol(E_\alpha^{k+l} \setminus E_\alpha^k, g) \leq \Vol((M
    \setminus X_{g_\alpha}) \setminus E_\alpha^k,
    g).
  \end{equation}
  On the other hand, as we already noted, $\chi(E_\alpha^k)$ converges
  a.e.~to $\chi(M \setminus X_{g_\alpha})$ as $k \rightarrow \infty$,
  and so $\chi((M \setminus X_{g_\alpha}) \setminus E_\alpha^k)
  \rightarrow 0$ a.e.  Since we also have that $\chi((M \setminus
  X_{g_\alpha}) \setminus E_\alpha^k) \leq 1$ for each $k \in \N$, and
  the constant function $1$ is $\mu_{g_\alpha}$-integrable since
  $g_\alpha$ has finite volume, we can apply the Lebesgue Dominated
  Convergence Theorem to see that
  \begin{equation}\label{eq:7}
    \lim_{k \rightarrow \infty} \Vol((M
    \setminus X_{g_\alpha}) \setminus E_\alpha^k, g_\alpha) =
    \lim_{k \rightarrow \infty} \integral{M}{}{\chi(M \setminus
      E_\alpha^k)}{d \mu_{g_\alpha}} = 0.
  \end{equation}
  Analogously, $\lim_{k \rightarrow \infty} \Vol((M \setminus
  X_{g_\alpha}) \setminus E_\alpha^k, g) = 0$.

  Combining this with (\ref{eq:5}), (\ref{eq:6}), \eqref{eq:40} and
  (\ref{eq:7}) shows that for $k$ large enough, $d(g_k^\alpha,
  g_{k+l}^\alpha)$ becomes arbitrarily small---independently of
  $l$---from which it follows that $\{g_k^\alpha\}$ is Cauchy, as was
  to be shown.
\end{proof}

We now have most of the prerequisite facts necessary for our study of
the topology induced by $d$.  However, we will still need to review
other convergence notions, including the metric $\TM$ mentioned in the
introduction.

\subsection{Convergence in measure}\label{sec:convergence-measure}

In the theory of $L^p$ spaces, convergence of functions in measure
plays an important role.  We will need a straightforward
generalization of this here, where we just replace the usual absolute
value on $\R$ with the norm $\absgx{\, \cdot \,}$.

\begin{definition}\label{dfn:1}
  $\{g_k\} \subset \Mm$ converges in ($\mu$-)measure to $g_0 \in \Mm$,
  where $\mu$ is some Lebesgue measure on $M$, if for all $\epsilon >
  0$,
  \begin{equation*}
    \lim_{k \rightarrow \infty} \mu
    \left(
      \left\{
        x \in M \midmid \absgx{g_0(x) - g_k(x)}
        \geq \epsilon
      \right\}
    \right) = 0.
  \end{equation*}
  If $\mu$ is omitted, it is assumed that $\mu = \mu_g$, the volume
  form of our fixed reference metric $g$.
\end{definition}

The following lemma will allow us to translate convergence in
$\mu_g$-measure to convergence in other measures.

\begin{lemma}\label{lem:14}
  Let $\mu$ be a finite Lebesgue measure on $M$, and let $\nu$ be
  another finite Lebesgue measure on $M$ that is absolutely continuous
  with respect to $\mu$.  If the sequence $\{g_k\} \subset \Mm$
  converges to $g_0 \in \Mm$ in $\mu$-measure, then the sequence
  converges to $g_0$ in $\nu$-measure as well.
\end{lemma}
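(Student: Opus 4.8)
The plan is to run the classical $L^p$-space argument for convergence in measure, with the usual absolute value on $\mathbb{R}$ replaced by the fiberwise norm $\absgx{\,\cdot\,}$, and to feed in absolute continuity at the one point where it is needed. First I would fix $\epsilon > 0$ and, for each $k$, set
\begin{equation*}
  A_k^\epsilon := \left\{ x \in M \midmid \absgx{g_0(x) - g_k(x)} \geq \epsilon \right\}.
\end{equation*}
By hypothesis $\mu(A_k^\epsilon) \to 0$ as $k \to \infty$; the goal is to deduce $\nu(A_k^\epsilon) \to 0$.

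The key input is the standard measure-theoretic fact that absolute continuity of a \emph{finite} measure $\nu$ with respect to $\mu$ is equivalent to the "$\epsilon$--$\delta$" formulation: for every $\eta > 0$ there is $\delta > 0$ such that $\mu(E) < \delta$ implies $\nu(E) < \eta$, for all measurable $E \subseteq M$. (This uses finiteness of $\nu$ in an essential way; it is where the hypotheses that both $\mu$ and $\nu$ are finite Lebesgue measures get used.) Given this, the argument is immediate: fix $\eta > 0$, choose $\delta > 0$ from the $\epsilon$--$\delta$ criterion, and then choose $N$ so large that $\mu(A_k^\epsilon) < \delta$ for all $k \geq N$ (possible since $\mu(A_k^\epsilon) \to 0$). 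Then $\nu(A_k^\epsilon) < \eta$ for all $k \geq N$. Since $\eta > 0$ was arbitrary, $\nu(A_k^\epsilon) \to 0$, which is exactly convergence in $\nu$-measure.

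The only point requiring a little care is that the measurable set $A_k^\epsilon$ is genuinely Lebesgue-measurable, so that the $\epsilon$--$\delta$ criterion applies to it: this follows because $g_0, g_k$ are measurable sections of $S^2 T^* M$, the reference metric $g$ is smooth, and $(x, a) \mapsto \absgx{a}$ is continuous on the total space $\cup_x \satx$, so $x \mapsto \absgx{g_0(x) - g_k(x)}$ is a measurable function and its superlevel set is measurable. I expect the main (and in fact only) obstacle to be merely citing or briefly reproving the $\epsilon$--$\delta$ characterization of absolute continuity for finite measures; everything else is a routine chase of quantifiers. No infinite-dimensional geometry enters — this is a purely measure-theoretic lemma, included only to transfer the $\mu_g$-based notion of Definition \ref{dfn:1} to the measures $\mu_{\tilde g}$ that arise from semimetrics.
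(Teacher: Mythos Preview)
Your proof is correct and follows essentially the same route as the paper: both reduce the statement to the $\epsilon$--$\delta$ characterization of absolute continuity for a finite measure $\nu \ll \mu$, and then apply it to the sets $A_k^\epsilon$. The only minor difference is that the paper includes a one-line hint for why the $\epsilon$--$\delta$ criterion holds (via the Radon--Nikodym derivative and the fact that $\mu\{d\nu/d\mu \geq C\} \to 0$), whereas you simply cite it as standard; conversely, you spell out the measurability of $A_k^\epsilon$, which the paper leaves implicit.
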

\begin{proof}
  The lemma follows easily if one observes that for each $\epsilon >
  0$, there exists $\delta$ such that for all measurable $E \subseteq
  M$, $\mu(E) < \delta$ implies $\nu(E) < \epsilon$.  This, in turn,
  follows from the fact that
  \begin{equation*}
    \lim_{C \rightarrow \infty} \mu
    \left(
      \left\{
        x \in M \midmid \frac{d \nu}{d \mu}(x) \geq C
      \right\}
    \right) = 0,
  \end{equation*}
  where $d \nu / d \mu$ denotes the Radon-Nikodym derivative.
\end{proof}

In particular, the previous lemma applies to the case when $\mu =
\mu_g$ and $\nu = \mu_{\tilde{g}}$, where $\tilde{g} \in \Mf$.

Finally, we need a quick definition that gives a strong type of
convergence of measures that will come up later.

\begin{definition}\label{dfn:2}
  Let $\mu_k$ and $\mu$ be nonnegative Lebesgue measures on $M$.  We
  say that $\{\mu_k\}$ \emph{converges uniformly to $\mu$} iff for all
  $\epsilon > 0$, there exists $k_0 \in \N$ such that for all $k \geq
  k_0$ and for all $E \subseteq M$ measurable, $\abs{\mu(E) -
    \mu_k(E)} < \epsilon$.
\end{definition}

\begin{remark}
  Notice that this convergence is stronger than, for example, weak-$*$
  convergence (sometimes also just called weak convergence) of
  measures.  It gives a topology on the space of measures on $M$ that
  is equivalent to the topology induced by the supremum norm on the
  vector space of signed measures on $M$, where we set $\abs{\mu} :=
  \sup \abs{\mu(E)}$, with the supremum ranging over all measurable
  subsets $E \subseteq M$.
\end{remark}

The usefulness of this definition is given by its connection to the
Radon-Nikodym derivative.

\begin{lemma}\label{lem:18}
  Let $\mu_k$ and $\mu$ be nonnegative Lebesgue measures on $M$, and
  let $\nu$ be any Lebesgue measure with respect to which $\mu$ and
  all $\mu_k$ are absolutely continuous.  Furthermore, assume that
  $\mu_k(M), \mu(M), \nu(M) < \infty$.  Then uniform convergence of
  $\{\mu_k\}$ to $\mu$ is equivalent to $L^1$-convergence of the
  Radon-Nikodym derivatives:
  \begin{equation*}
    \frac{d \mu_k}{d \nu} \xrightarrow{L^1(M, \nu)} \frac{d \mu}{d \nu}.
  \end{equation*}
\end{lemma}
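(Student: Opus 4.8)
The plan is to prove the equivalence of the two conditions in Lemma \ref{lem:18} by translating everything into statements about the Radon--Nikodym derivatives $f_k := d\mu_k/d\nu$ and $f := d\mu/d\nu$, which are nonnegative $\nu$-integrable functions on $M$. The key observation is that for any measurable $E \subseteq M$, we have $\mu_k(E) - \mu(E) = \integral{E}{}{(f_k - f)}{d\nu}$, and the supremum of $|\mu_k(E) - \mu(E)|$ over all measurable $E$ is attained (up to equality) by taking $E_k^+ := \{x \mid f_k(x) \geq f(x)\}$ and $E_k^- := \{x \mid f_k(x) < f(x)\}$. Indeed, one checks directly that
\begin{equation*}
  \sup_{E \text{ measurable}} \abs{\mu_k(E) - \mu(E)} = \max\left\{ \integral{E_k^+}{}{(f_k - f)}{d\nu},\ \integral{E_k^-}{}{(f - f_k)}{d\nu} \right\} \leq \integral{M}{}{\abs{f_k - f}}{d\nu},
\end{equation*}
and moreover the right-hand side equals the sum of the two integrals in the middle, so it is at most twice the supremum on the left. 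This sandwiching is the heart of the argument: it shows the supremal total-variation quantity $\sup_E \abs{\mu_k(E) - \mu(E)}$ and the $L^1(M,\nu)$-distance $\norm{f_k - f}_{L^1(M,\nu)}$ are comparable (within a factor of $2$).

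With this comparison in hand, the equivalence is immediate. First I would show that $L^1$-convergence of the derivatives implies uniform convergence of the measures: given $\epsilon > 0$, choose $k_0$ so that $\norm{f_k - f}_{L^1(M,\nu)} < \epsilon$ for $k \geq k_0$; then by the upper bound above, $\abs{\mu_k(E) - \mu(E)} \leq \norm{f_k - f}_{L^1(M,\nu)} < \epsilon$ for all measurable $E$, which is exactly uniform convergence in the sense of Definition \ref{dfn:2}. For the converse, suppose $\{\mu_k\}$ converges uniformly to $\mu$. Given $\epsilon > 0$, pick $k_0$ so that $\abs{\mu_k(E) - \mu(E)} < \epsilon/2$ for all measurable $E$ and all $k \geq k_0$. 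Applying this with $E = E_k^+$ and $E = E_k^-$ and adding the two resulting inequalities yields
\begin{equation*}
  \norm{f_k - f}_{L^1(M,\nu)} = \integral{E_k^+}{}{(f_k - f)}{d\nu} + \integral{E_k^-}{}{(f - f_k)}{d\nu} < \frac{\epsilon}{2} + \frac{\epsilon}{2} = \epsilon,
\end{equation*}
establishing $L^1$-convergence.

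I do not anticipate a serious obstacle here; the proof is essentially the classical fact that total variation distance of measures equals half the $L^1$ distance of densities (Scheffé's lemma circle of ideas), adapted to the present setting. The one point requiring a small amount of care is the measurability of the sets $E_k^+$ and $E_k^-$, which follows since $f_k$ and $f$ are measurable functions, so $\{f_k \geq f\}$ is a measurable set; and the finiteness hypotheses $\mu_k(M), \mu(M), \nu(M) < \infty$ guarantee that $f_k, f \in L^1(M,\nu)$ so that all the integrals appearing are finite and the manipulations are legitimate. If one wants to be fully rigorous about the claim that the supremum over $E$ is (essentially) attained at $E_k^+$ or $E_k^-$, it suffices to note that for arbitrary measurable $E$, writing $E = (E \cap E_k^+) \cup (E \cap E_k^-)$ gives $\mu_k(E) - \mu(E) \leq \integral{E \cap E_k^+}{}{(f_k - f)}{d\nu} \leq \integral{E_k^+}{}{(f_k - f)}{d\nu}$ and similarly a lower bound using $E_k^-$; this is the only genuinely computational step and it is routine.
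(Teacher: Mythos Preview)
Your proof is correct and takes a genuinely different, more elementary route than the paper's. For the forward direction ($L^1$-convergence implies uniform convergence) both you and the paper treat it as straightforward. For the converse, the paper invokes Theorem~\ref{thm:11} (the Vitali-type characterization of $L^1$-convergence as uniform absolute continuity plus convergence in measure) and then verifies each of these two properties separately: uniform absolute continuity directly from the uniform convergence hypothesis, and convergence in measure by a contradiction argument involving the sets $\{d\mu/d\nu - d\mu_k/d\nu \geq \epsilon\}$. Your approach instead exploits the elementary total-variation identity (the Scheff\'e circle of ideas): by splitting $M$ into $E_k^+ = \{f_k \geq f\}$ and $E_k^-$, you show directly that $\sup_E \abs{\mu_k(E)-\mu(E)}$ and $\norm{f_k-f}_{L^1(\nu)}$ are within a factor of~$2$ of each other, whence the equivalence is immediate. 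Your argument is shorter, self-contained, and avoids the appeal to Theorem~\ref{thm:11}; the paper's route has the minor incidental benefit of exercising Theorem~\ref{thm:11}, which is used again later in the paper, but yours is the cleaner proof of the lemma as stated.
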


In order to prove this lemma, we need a characterization of
convergence of $L^p$ functions.  Let $(\Sigma, \nu)$ be a measure
space, and recall that a collection of measurable functions
$\mathcal{G}$ on $\Sigma$ is called \emph{uniformly absolutely
  continuous} if the following holds: For all $\epsilon > 0$, there
exists $\delta > 0$ such that if $E \subseteq \Sigma$ is measurable
with $\nu(E) < \delta$, then
\begin{equation*}
  \integral{E}{}{\abs{f}}{d \nu} < \epsilon \quad \textnormal{for all}\ f \in \mathcal{G}.
\end{equation*}
It is not hard to see that if $\nu(\Sigma) < \infty$, then any finite
set of functions is uniformly absolutely continuous.

With this definition, we have the following result.

\begin{theorem}[{\cite[Thm.~8.5.14]{rana02:_introd_to_measur_and_integ}}]\label{thm:11}
  Let $(X, \nu)$ be a measure space with $\nu(X) < \infty$, and let
  $f$ be a measurable function on $X$.  Furthermore, let $f_k$ be a
  sequence of functions in $L^p(X, \nu)$.  Then the following
  statements are equivalent.
  \begin{enumerate}
  \item $f_k \rightarrow f$ in $L^p(X, \nu)$.
  \item $\{ |f_k|^p \mid k \in \N \}$ is uniformly absolutely
    continuous and $f_k \rightarrow f$ in measure.
  \end{enumerate}
\end{theorem}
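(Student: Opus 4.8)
The plan is to establish the two implications separately, recognizing this as a form of the Vitali convergence theorem adapted to a finite measure space. I would use two elementary facts repeatedly: the convexity bound $|a - b|^p \leq 2^{p-1}(|a|^p + |b|^p)$, and the observation recorded just above the theorem that, since $\nu(X) < \infty$, any finite family of integrable functions is uniformly absolutely continuous. The completeness of $L^p(X,\nu)$ will also play a role in the harder direction.

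For $(1) \Rightarrow (2)$, convergence in measure is immediate from Chebyshev's inequality: $\nu(\{x : |f_k(x) - f(x)| \geq \epsilon\}) \leq \epsilon^{-p} \int_X |f_k - f|^p \, d\nu \to 0$. For uniform absolute continuity of $\{|f_k|^p\}$, I would fix $\epsilon > 0$ and use $\int_E |f_k|^p \, d\nu \leq 2^{p-1}(\int_E |f_k - f|^p \, d\nu + \int_E |f|^p \, d\nu) \leq 2^{p-1}(\|f_k - f\|_p^p + \int_E |f|^p \, d\nu)$. Choosing $N$ with $\|f_k - f\|_p^p$ small for $k > N$, and then a single $\delta$ that simultaneously controls $\int_E |f|^p \, d\nu$ (as $f \in L^p$) and handles the finite family $\{|f_1|^p, \dots, |f_N|^p\}$ via the quoted fact, yields the required uniform bound valid for all $k$.

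For $(2) \Rightarrow (1)$, rather than estimate $\|f_k - f\|_p$ directly---which would force me to first establish $f \in L^p$---I would show $\{f_k\}$ is Cauchy in $L^p$ and then identify the limit. The family $\{|f_j - f_k|^p : j,k \in \N\}$ is uniformly absolutely continuous by the convexity bound and the hypothesis on $\{|f_k|^p\}$; and convergence in measure upgrades to Cauchy in measure, since $\{|f_j - f_k| \geq \eta\} \subseteq \{|f_j - f| \geq \eta/2\} \cup \{|f_k - f| \geq \eta/2\}$. Splitting $X$ at a threshold $\eta$, the integral of $|f_j - f_k|^p$ over $\{|f_j - f_k| < \eta\}$ is at most $\eta^p \nu(X)$, made small by the choice of $\eta$, while over $\{|f_j - f_k| \geq \eta\}$---a set whose measure tends to $0$---it is controlled by uniform absolute continuity once $j, k$ are large. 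Thus $\{f_k\}$ is Cauchy in $L^p$ and converges to some $h \in L^p$ by completeness; comparing an a.e.-convergent subsequence of this $L^p$ convergence with an a.e.-convergent subsequence of $f_k \to f$ in measure forces $h = f$ a.e., so $f \in L^p$ and $f_k \to f$ in $L^p$.

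The main obstacle is the $(2) \Rightarrow (1)$ direction, and the key decision there is to prove Cauchyness in $L^p$ rather than convergence directly. This sidesteps the need to establish $f \in L^p$ in advance (which would otherwise require care, e.g.\ Fatou applied to a subsequence together with some boundedness input) and lets the completeness of $L^p$ supply the limit, with the two-sided threshold split being the standard device for converting convergence in measure plus a tightness-type condition into norm convergence.
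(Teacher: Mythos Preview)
The paper does not prove this statement; it is quoted without proof from the cited textbook \cite{rana02:_introd_to_measur_and_integ} and used as a black box in the proof of Lemma~\ref{lem:18}. So there is no in-paper argument to compare against.

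That said, your proposal is correct and is the standard proof of the Vitali convergence theorem on a finite measure space. Both directions are handled cleanly: the $(1)\Rightarrow(2)$ direction via Chebyshev and the convexity bound plus the finite-family observation is routine, and in $(2)\Rightarrow(1)$ your choice to prove that $\{f_k\}$ is Cauchy in $L^p$---rather than attacking $\|f_k - f\|_p$ directly---is the right maneuver, since it avoids having to verify $f \in L^p$ beforehand and lets completeness of $L^p$ do that work. The threshold split and the identification of the $L^p$ limit with $f$ via a.e.-convergent subsequences are standard and sound.
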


We can now use this to prove the lemma.

\begin{proof}[Proof of Lemma \ref{lem:18}]
  That $L^1$-convergence implies uniform convergence is a
  straightforward argument, so we turn to the proof of the converse
  statement.
  
  By Theorem \ref{thm:11}, it suffices to show that the set of
  functions $\{ d \mu_k / d \nu \mid k \in \N \}$ is uniformly
  absolutely continuous (with respect to $\nu$), and that $d \mu_k / d
  \nu$ converges to $d \mu / d \nu$ in $\nu$-measure.  (That $d \mu /
  d \nu$ and each $d \mu_k / d \nu$ are $L^1$ functions is implied by
  $\mu_k(M), \mu(M) < \infty$.)

  To show that $\{ d \mu_k / d \nu \mid k \in \N \}$ is uniformly
  absolutely continuous, let $\epsilon > 0$ be given.  Since without
  loss of generality, we can forget a finite number of functions from
  the set, we may restrict to $k$ large enough that $|\mu(E) -
  \mu_k(E)| < \epsilon / 2$ for all measurable $E \subseteq M$.  Since
  $\mu$ is absolutely continuous with respect to $\nu$, there exists
  $\delta > 0$ such that if $E$ is measurable and $\nu(E) < \delta$,
  then $\mu(E) < \epsilon / 2$.  So, let $Y \subseteq M$ be any
  measurable subset with $\nu(Y) < \delta$.  Then
  \begin{equation*}
    \integral{Y}{}{\abs{ \frac{d \mu_k}{d \nu} }}{d
      \nu} = \mu_k(Y) < \mu(Y) + \epsilon
    / 2 < \epsilon,
  \end{equation*}
  showing that $\{ d \mu_k / d \nu \mid k \in \N \}$ is uniformly
  absolutely continuous with respect to $\nu$.

  To see that $d \mu_k / d \nu$ converges to $d \mu / d \nu$ in
  $\nu$-measure, assume the contrary.  Thus, there exists $\epsilon >
  0$ such that if
  \begin{equation*}
    E_k^{\epsilon+} := \left\{
      x \in M \midmid 
      \frac{d \mu}{d \nu}
      - 
      \frac{d \mu_k}{d \nu}
      \geq \epsilon
    \right\} \qquad \textnormal{and} \qquad E_k^{\epsilon-} := \left\{
      x \in M \midmid 
      \frac{d \mu_k}{d \nu}
      - 
      \frac{d \mu}{d \nu}
      \geq \epsilon
    \right\},
  \end{equation*}
  then
  \begin{equation*}
    \limsup_{k \rightarrow \infty}
    \nu (E_k^{\epsilon+} \cup E_k^{\epsilon-}) = \delta > 0.
  \end{equation*}
  By additivity of $\nu$, either $\limsup \nu(E_k^{\epsilon+}) \geq
  \delta / 2$, or $\limsup \nu(E_k^{\epsilon-}) \geq \delta / 2$.
  Without loss of generality, say that the former holds.  This then
  gives that for all $k_0 \in \N$, there exists $k \geq k_0$ such that
  \begin{equation*}
    \mu(E_k^{\epsilon+}) - \mu_k(E_k^{\epsilon+}) = \integral{E_k^{\epsilon+}}{}{
      \left[
        \frac{d \mu}{d \nu} - \frac{d \mu_k}{d \nu}
      \right]}{d \nu} \geq \epsilon \cdot \frac{\delta}{2} > 0.
  \end{equation*}
  This, however, is in direct contradiction of the assumption that
  $\mu_k$ converges uniformly to $\mu$.
\end{proof}

With these preliminaries on convergence in measure spaces out of the
way, we now turn to our detailed discussion of the metric structure
induced on $\M$ by $\TM$.

\section{The metric $\TM$}\label{sec:metric-tm}

\subsection{Motivation and definition}\label{sec:motiv-defin}

As mentioned in the Introduction, computing $d$ for arbitrary points
$g_0, g_1 \in \M$ involves an infinite-dimensional problem, since we
have to find the infimum of the expression
\begin{equation}\label{eq:31}
  \begin{aligned}
    L(g_t) &= \integral{0}{1}{\norm{g'_t}_{g_t}}{dt} =
    \integral{0}{1}{ \left( \integral{M}{}{\tr_{g_t}((g'_t)^2)}{d
          \mu_{g_t}}
      \right)^{1/2}}{dt} \\
    &= \integral{0}{1}{ \left( \integral{M}{}{\tr_{g_t}((g'_t)^2)
          \sqrt{\det G_t}}{d \mu_g} \right)^{1/2}}{dt}
  \end{aligned}
\end{equation}
over all paths $g_t$ connecting $g_0$ and $g_1$.  Furthermore, as
noted at the end of Section \ref{sec:manifold-metrics}, we cannot
reduce the question to one of geodesics even for close-together
points.

One solution, as hinted at in the Introduction, is changing the order
of integration in (\ref{eq:31}).  We have already taken the first step
in this by removing the $t$-dependence from the volume form above.
The second step requires introducing a new Riemannian metric on
$\Matx$.

\begin{definition}\label{dfn:9}
  For each $x \in M$, define a Riemannian metric $\langle \cdot ,
  \cdot \rangle^0$ on $\Matx$ by
  \begin{equation*}
    \langle b , c \rangle^0_{a} = \tr_{a} (b c) \det
    A \quad \textnormal{for all}\ b, c \in T_a \Matx \cong
    \satx.
  \end{equation*}
  (Recall that $A$ denotes $g(x)^{-1} a$, cf.~Convention \ref{cvt:2}.)
  We denote by $\theta^g_x$ the Riemannian distance function of
  $\langle \cdot , \cdot \rangle^0$.
  
  For any measurable $Y \subseteq M$, define a function $\Theta_Y : \M
  \times \M \rightarrow \R$ by
  \begin{equation*}
    \Theta_Y(g_0, g_1) = \integral{Y}{}{\theta^g_x(g_0(x), g_1(x))}{d \mu_g}.
  \end{equation*}
\end{definition}

Thus, determining $\Theta_Y(g_0, g_1)$ indeed involves finding the
distance between $g_0(x)$ and $g_1(x)$ in $\Matx$ and then integrating
this over $M$, as desired.  Note that as $\tgx$ is a Riemannian
distance function on a finite-dimensional manifold, it is as usual a
metric (in particular, it is positive definite).

By the results of \cite[\S
4]{clarke:_metric_geomet_of_manif_of_rieman_metric},
we have that $\Theta_Y$ does not depend on the choice of reference
metric $g$.  Furthermore, $\Theta_Y$ is a pseudometric on $\M$, and in
the special case $Y = M$---the one we will be most concerned
with---$\TM$ is a metric (in the sense of metric spaces).  Using a
Hölder's Inequality argument to get rid of the square root in
(\ref{eq:31}), we also obtained the following relation between $\TM$
and $d$
\cite[Prop.~27]{clarke:_metric_geomet_of_manif_of_rieman_metric}:
\begin{equation}\label{eq:32}
  \TM(g_0, g_1) \leq d(g_0, g_1) \left( \sqrt{n}\, d(g_0, g_1) +
    2 \sqrt{\Vol(M, g_0)} \right)
\end{equation}
for all $g_0, g_1 \in \M$.  This inequality shows that the topology of
$\TM$ is no stronger that that of $d$, in the sense that $d(g_k, g_0)
\rightarrow 0$ implies $\TM(g_k, g_0) \rightarrow 0$ for all $g_0 \in
\M$ and all sequences $\{g_k\} \subset \M$.  If we had such an
inequality with the roles of $\TM$ and $d$ reversed, then we would
have achieved our goal of changing the order of integration in
(\ref{eq:31}), at least as far as topological questions are concerned.
Unfortunately, we do not have such an estimate, but the rest of the
paper is essentially about giving us the topological results we want
in a different way.

\subsection{Fundamental results on $\TM$}\label{sec:fund-results-tmk}

In the rest of this section, we give an investigation of the
properties of $\TM$, starting with a review of results we have already
established elsewhere.  The first such result gives an explicit
description of the completion of $(\Matx, \theta^g_x)$, and is the
first step towards understanding the completion of $(\M, \TM)$.

\begin{theorem}[{\cite[Thm.~4.14]{clarke:_compl_of_manif_of_rieman_metric}}]\label{thm:10}
  For any given $x \in M$, let $\textnormal{cl}(\Matx)$ denote the
  closure of $\Matx \subset \satx$ with regard to the natural
  topology.  Then $\cl(\Matx)$ consists of all positive semidefinite
  $(0,2)$-tensors at $x$.  Let us denote the boundary of $\Matx$, as a
  subspace of $\satx$, by $\partial \Matx$.  We denote the quotient of
  this closure by its boundary by $\overline{\Matx} := \cl(\Matx)
  / \partial \Matx$.

  Then the completion of $(\Matx, \theta^g_x)$ can be identified with
  $\overline{\Matx}$.  The distance function is given by
  \begin{equation}\label{eq:33}
    \theta^g_x([g_0], [g_1]) = \lim_{k \rightarrow
      \infty}\theta^g_x(g^0_k, g^1_k),
  \end{equation}
  where $\{g^0_k\}$ and $\{g^1_k\}$ are any sequences in $\Matx$
  converging (in the topology of $\satx$) to $g_0$ and $g_1$,
  respectively, in $\cl(\Matx)$.
\end{theorem}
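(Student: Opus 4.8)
The plan is to make the distance function $\theta^g_x$ explicit by recognizing $(\Matx,\langle\cdot,\cdot\rangle^0)$ as a Riemannian metric cone, and then to read the completion off from the standard geometry of such cones.

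\medskip

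\noindent\emph{Step 1: the cone coordinates.} The identification $\cl(\Matx)=\{\text{positive semidefinite }(0,2)\text{-tensors at }x\}$, with $\partial\Matx$ the degenerate (rank $<n$) ones, is elementary: the closure in $\satx$ of the open cone of positive-definite symmetric forms is the cone of positive-semidefinite forms. The substance is the metric. Choosing a basis of $T_xM$ that is orthonormal for $g(x)$, we may regard $\Matx$ as the cone of positive-definite symmetric matrices $a$, with $A=a$ and $\langle b,c\rangle^0_a=\tr(a^{-1}ba^{-1}c)\det a$. I would introduce the Freed--Groisser splitting $a=\tau\,a_1$, where $\tau:=(\det A)^{1/n}>0$ and $a_1:=\tau^{-1}a$ lies in the unimodular slice $\Matx^1:=\{b\in\Matx\mid\det(g(x)^{-1}b)=1\}$, and set $r:=\frac{2}{\sqrt n}\sqrt{\det A}=\frac{2}{\sqrt n}\tau^{n/2}$. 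Writing a curve $a(s)=\tau(s)a_1(s)$ one gets $a^{-1}\dot a=\frac{\dot\tau}{\tau}\,\mathrm{Id}+a_1^{-1}\dot a_1$ with $\tr(a_1^{-1}\dot a_1)=0$, and a short computation using $\det a=\tau^n$ yields
\[
 \langle\dot a,\dot a\rangle^0_a=\dot r^{\,2}+\tfrac{n}{4}\,r^2\,\tr\!\big((a_1^{-1}\dot a_1)^2\big).
\]
Thus $(\Matx,\langle\cdot,\cdot\rangle^0)$ is isometric to the Riemannian cone $\big((0,\infty)\times\Matx^1,\ dr^2+r^2h\big)$, where $h$ is $\frac n4$ times the standard invariant metric on $\Matx^1$, a complete (Hadamard) manifold, namely the symmetric space $SL(n,\R)/SO(n)$. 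The decisive point is that $\partial\Matx$ corresponds exactly to $\{\det A=0\}=\{r\to 0\}$.

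\medskip

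\noindent\emph{Step 2: two metric estimates.} From the cone form I extract the two facts that drive everything. First, $r$ is $1$-Lipschitz for $\theta^g_x$, since any path has length at least the total variation of its $r$-coordinate; hence $\big|\sqrt{\det A_0}-\sqrt{\det A_1}\big|\leq\frac{\sqrt n}{2}\,\theta^g_x(a_0,a_1)$ (the pointwise analogue of Lemma \ref{lem:3}). Second, $\theta^g_x(a_0,a_1)\leq r(a_0)+r(a_1)$: concatenate the radial segment from $a_0$ down to level $r=\epsilon$, a horizontal path at that level joining the two angular coordinates (length at most $\epsilon$ times their $h$-distance), and the radial segment back up to $a_1$, then let $\epsilon\to 0$ (the pointwise analogue of Theorem \ref{thm:8}). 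In particular, for each $a\in\Matx$, $\inf_{b\in\partial\Matx}\theta^g_x(a,b)=r(a)=\frac{2}{\sqrt n}\sqrt{\det A}>0$: the upper bound follows by letting $b$ run to a degenerate tensor, the lower bound from the first estimate.

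\medskip

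\noindent\emph{Step 3: classifying Cauchy sequences and assembling.} Let $\{a_k\}\subset\Matx$ be $\theta^g_x$-Cauchy. By the Lipschitz bound, $\{r(a_k)\}$ is Cauchy in $\R_{\geq 0}$, so $r(a_k)\to r_\infty\geq 0$. If $r_\infty=0$, the second estimate shows that $\{a_k\}$, and likewise any other Cauchy sequence with $r(\cdot)\to 0$ — in particular any sequence in $\Matx$ converging in $\satx$ to a point of $\partial\Matx$ — all represent a single Cauchy class $o$. If $r_\infty>0$, then eventually $a_k$ lies in a compact-in-radius region $\{r_\infty/2\leq r\leq 2r_\infty\}$ on which the cone metric $dr^2+r^2h$ is bi-Lipschitz to the product $dr^2+h$, and a near-minimizing path between two $\theta^g_x$-close points of that region cannot dip far outside it (doing so would waste length $\sim r_\infty$); so the angular coordinates are $h$-Cauchy, hence convergent by completeness of $\Matx^1$, and $a_k$ converges in $\Matx$ in the manifold topology (equivalently, since $\theta^g_x$ induces the manifold topology, in the $\satx$-topology to a point of $\Matx$). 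Alternatively one may simply invoke the standard fact that the metric completion of a Riemannian cone over a complete base is obtained by adjoining the apex, with distance $r$ to the point at radius $r$. Either way this gives a bijection $\overline{(\Matx,\theta^g_x)}\leftrightarrow\Matx\sqcup\{o\}$, which is precisely the set $\cl(\Matx)/\partial\Matx=\overline{\Matx}$; transporting $\theta^g_x$ along it and checking independence of the approximating sequences — immediate from the case analysis, since two sequences with the same limit are Cauchy-equivalent — yields formula (\ref{eq:33}), and positive-definiteness of the resulting metric is exactly the inequality $\theta^g_x(a,o)=r(a)>0$ from Step 2.

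\medskip

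\noindent The heart of the matter is the metric-cone computation of Step 1 (together with completeness of the unimodular slice $SL(n,\R)/SO(n)$); once that is in hand everything else is soft. The only genuinely technical residue is the case $r_\infty>0$, which is where completeness of the slice is actually used — and which one can instead outsource to the standard theory of completions of Riemannian cones.
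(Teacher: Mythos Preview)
The paper does not prove this theorem here; it is quoted from the author's earlier work and used as a black box. So there is no proof in the present paper to compare against. Judging your argument on its own merits, it is correct, and in fact it recovers, with sharp constants, the two auxiliary estimates the paper later cites from the same source: your Lipschitz bound on $r$ is exactly Lemma~\ref{lem:1}, and your triangle-via-the-apex bound gives Proposition~\ref{prop:7} with the explicit value $C'(n)=2/\sqrt{n}$. This strongly suggests your cone approach is in the same spirit as the original proof.

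One small remark on Step~3: your handling of the case $r_\infty>0$ is the only place with real content, and your sketch is fine but terse. To make it airtight, note that if $a_k,a_l$ lie in the shell $\{r_\infty/2\leq r\leq 2r_\infty\}$ and $\theta^g_x(a_k,a_l)<\epsilon<r_\infty/4$, then any path of length $<\epsilon$ between them cannot reach $\{r<r_\infty/4\}$ (the radial variation alone would already cost at least $r_\infty/4$), so it stays in $\{r\geq r_\infty/4\}$, where the angular part of the metric satisfies $r^2h\geq (r_\infty/4)^2 h$; hence the $h$-distance between the angular coordinates is at most $4\epsilon/r_\infty$. This makes the bi-Lipschitz comparison explicit and closes the argument cleanly without appealing to an external statement about completions of Riemannian cones.
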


From now on, we will drop the equivalence class notation when writing
the $\tgx$-distance between elements of $\overline{\Matx}$, with the
understanding that the formula (\ref{eq:33}) is implied.

Using the above theorem, we can give meaning to $\Theta_Y$ on
$\overline{(\M, d)}$, and even extend the estimate (\ref{eq:32}) to
this space.

\begin{proposition}[{\cite[Prop.~4.25]{clarke:_compl_of_manif_of_rieman_metric}}]\label{prop:6}
  Let $Y \subseteq M$ be measurable.  Then the pseudometric $\Theta_Y$
  on $\M$ can be extended to a pseudometric on $\overline{(\M, d)}
  \cong \Mfhat$ via
  \begin{equation}\label{eq:30}
    \Theta_Y(\{g^0_k\}, \{g^1_k\}) := \lim_{k \rightarrow \infty}
    \Theta_Y(g^0_k, g^1_k)
  \end{equation}
  for any Cauchy sequences $\{g^0_k\}$ and $\{g^1_k\}$.  This
  pseudometric is no stronger than $d$ in the sense that $d(\{g^0_k\},
  \{g^1_k\}) = 0$ implies $\Theta_Y(\{g^0_k\}, \{g^1_k\}) = 0$.  More
  precisely, we have
  \begin{equation*}
    \Theta_Y (\{g^0_k\}, \{g^1_k\}) \leq d(\{g^0_k\}, \{g^1_k\}) \left( \sqrt{n}\, d(\{g^0_k\},
      \{g^1_k\}) + 2 \sqrt{\Vol(M, g_0)} \right),
  \end{equation*}
  where $g_0$ is any element of $\Mf$ that $\{g^0_k\}$
  $\omega$-subconverges to.

  Furthermore, if $\{g^0_k\}$ and $\{g^1_k\}$ are sequences in $\M$
  that $\omega$-converge to $g_0$ and $g_1$, respectively, then the
  formula
  \begin{equation}\label{eq:92}
    \Theta_Y(\{g^0_k\}, \{g^1_k\}) = \integral{Y}{}{\theta_x^g(g_0(x), g_1(x))}{\mu_g(x)}
  \end{equation}
  holds for all $g_0, g_1 \in \M$.
\end{proposition}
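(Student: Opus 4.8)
The plan is to bootstrap everything from facts already in hand: that $\Theta_Y$ is a pseudometric on $\M$ with $\Theta_Y \le \Theta_M$ (immediate from $\theta^g_x \ge 0$ and $Y \subseteq M$), the estimate \eqref{eq:32} for $\Theta_M$, and the volume control of Lemma \ref{lem:3}. For the \emph{existence and well-definedness of \eqref{eq:30}}: given Cauchy sequences $\{g^0_k\}$ and $\{g^1_k\}$ in $\M$, the triangle inequality for the pseudometric $\Theta_Y$ on $\M$ gives
\begin{equation*}
  \left| \Theta_Y(g^0_k, g^1_k) - \Theta_Y(g^0_l, g^1_l) \right| \le \Theta_M(g^0_k, g^0_l) + \Theta_M(g^1_k, g^1_l),
\end{equation*}
and \eqref{eq:32} bounds each summand on the right by $d(g^\alpha_k, g^\alpha_l) \bigl( \sqrt{n}\, d(g^\alpha_k, g^\alpha_l) + 2 \sqrt{\Vol(M, g^\alpha_k)} \bigr)$. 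Since $\{g^\alpha_k\}$ is $d$-Cauchy, the distance terms tend to $0$, and Lemma \ref{lem:3} with $Y = M$ shows $\{ \sqrt{\Vol(M, g^\alpha_k)} \}$ is a Cauchy---hence bounded---sequence of reals; so $\{ \Theta_Y(g^0_k, g^1_k) \}$ is Cauchy in $\R$ and converges. Replacing either sequence by an equivalent one (distances tending to $0$) alters the limit by at most $\Theta_M$ of the difference, which tends to $0$ by the same bound, so \eqref{eq:30} descends to a well-defined function on $\overline{(\M, d)} \cong \Mfhat$; symmetry, nonnegativity and the triangle inequality pass to the limit, so it is again a pseudometric.

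For the \emph{quantitative bound}, combine $\Theta_Y(g^0_k, g^1_k) \le \Theta_M(g^0_k, g^1_k)$ with \eqref{eq:32} applied to $g^0_k, g^1_k$ and let $k \to \infty$. The left side tends to $\Theta_Y(\{g^0_k\}, \{g^1_k\})$; on the right, $d(g^0_k, g^1_k) \to d(\{g^0_k\}, \{g^1_k\})$ by definition of the extended $d$, while if $g_0 \in \Mf$ is any element that $\{g^0_k\}$ $\omega$-subconverges to, then $d(g^0_k, g_0) \to 0$ (the $\omega$-convergent subsequence being equivalent to $\{g^0_k\}$), so $\sqrt{\Vol(M, g^0_k)} \to \sqrt{\Vol(M, g_0)}$ by Lemma \ref{lem:3}. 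This yields the asserted inequality, and the implication $d(\{g^0_k\}, \{g^1_k\}) = 0 \Rightarrow \Theta_Y(\{g^0_k\}, \{g^1_k\}) = 0$ is its special case.

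Finally, for \eqref{eq:92}: if $\{g^\alpha_k\}$ $\omega$-converges to $g_\alpha \in \M$, then $d(g^\alpha_k, g_\alpha) \to 0$---equivalently, both $\{g^\alpha_k\}$ and the constant sequence $\{g_\alpha\}$ $\omega$-subconverge to $g_\alpha \in \M \subset \Mf$, so by the uniqueness statement recalled before Theorem \ref{thm:6} they represent the same point of $\overline{(\M, d)}$. By the well-definedness established above, $\Theta_Y(\{g^0_k\}, \{g^1_k\}) = \Theta_Y(\{g_0\}, \{g_1\}) = \lim_k \Theta_Y(g_0, g_1) = \Theta_Y(g_0, g_1)$, and the latter equals $\int_Y \theta^g_x(g_0(x), g_1(x)) \, d\mu_g$ by Definition \ref{dfn:9}. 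The only real subtleties lie in the two appeals to earlier machinery---using Lemma \ref{lem:3} to control $\sqrt{\Vol(M, g^\alpha_k)}$ so the estimates survive passage to the limit, and identifying an $\omega$-convergent sequence with its limit inside $\overline{(\M, d)}$; neither is deep, but I expect the bookkeeping around the latter identification to be the main thing to get right.
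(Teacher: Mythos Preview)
The paper does not prove this proposition; it is quoted from \cite[Prop.~4.25]{clarke:_compl_of_manif_of_rieman_metric}. Your argument is correct: the existence of the limit \eqref{eq:30}, its independence of representatives, and the quantitative bound all follow from \eqref{eq:32} together with the volume control of Lemma~\ref{lem:3}, exactly as you describe, and the pseudometric axioms pass to the limit without incident.

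For \eqref{eq:92} you handle the case $g_0, g_1 \in \M$---which is what the proposition literally asserts---by identifying each $\omega$-convergent sequence with the constant sequence at its limit via the uniqueness recalled before Theorem~\ref{thm:6}. This is correct. Note, however, that immediately after the proposition the paper claims \eqref{eq:92} for all $g_0, g_1 \in \Mf$ and relies on this in the proof of Theorem~\ref{thm:12}. Your constant-sequence trick does not extend to that setting, since a general $g_\alpha \in \Mf$ is not an element of $\M$ and hence cannot be represented by a constant sequence in $\M$. The stronger version---presumably what the cited reference actually proves---requires the further step of using the a.e.\ pointwise convergence built into $\omega$-convergence, together with Theorem~\ref{thm:10}, to obtain $\theta^g_x(g^0_k(x), g^1_k(x)) \to \theta^g_x(g_0(x), g_1(x))$ a.e., followed by a dominated-convergence or Fatou-type argument to pass the limit under the integral. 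Your proof is complete for the proposition as stated; just be aware of the gap between that statement and its downstream use.
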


In view of the formula (\ref{eq:92}), we will from now on write simply
$\Theta_Y(g_0, g_1)$ for any $g_0, g_1 \in \Mf$, where it is
understood that this quantity is given by (\ref{eq:30}) or,
equivalently, (\ref{eq:92}).

The next result we will make use of gives the pointwise version of
Lemma \ref{lem:3}.

\begin{lemma}[{\cite[Lemma 4.10]{clarke:_compl_of_manif_of_rieman_metric}}]\label{lem:1}
  Let $a_0, a_1 \in \Matx$.  Then
  \begin{equation*}
    \left|
      \sqrt{\det A_1} - \sqrt{\det A_0}
    \right| \leq \frac{\sqrt{n}}{2} \theta^g_x (a_0, a_1).
  \end{equation*}
\end{lemma}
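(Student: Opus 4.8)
\emph{Sketch of a proof.}
The plan is to control, along an arbitrary smooth path joining $a_0$ to $a_1$ in $\Matx$, the rate of change of $\sqrt{\det A_t}$ by the $\langle\,\cdot\,,\cdot\,\rangle^0$-speed of the path, and then to integrate and pass to the infimum over all such paths. First I would fix $x$, write $g = g(x)$, and choose a smooth path $a\colon[0,1]\to\Matx$ with $a(0)=a_0$ and $a(1)=a_1$ (one exists, for instance the straight segment, since $\Matx$ is an open convex cone). Setting $A_t := g^{-1}a_t$, the usual formula for the derivative of a determinant together with $A_t' = g^{-1} a_t'$ gives
\begin{equation*}
  \frac{d}{dt}\sqrt{\det A_t} = \frac{1}{2}\sqrt{\det A_t}\,(a_t)^{ij}(a_t')_{ij}.
\end{equation*}

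Next I would introduce the endomorphism $B_t$ of $T_xM$ with components $(B_t)^i{}_j = (a_t)^{ik}(a_t')_{kj}$. Because $a_t'$ is symmetric and $a_t$ is positive definite, $B_t$ is self-adjoint for the inner product $a_t$, hence diagonalizable with real eigenvalues $\lambda_1,\dots,\lambda_n$; moreover $(a_t)^{ij}(a_t')_{ij} = \operatorname{tr} B_t = \sum_i \lambda_i$, and a short index computation identifies $\tr_{a_t}\bigl((a_t')^2\bigr) = \operatorname{tr}(B_t^2) = \sum_i \lambda_i^2$. The key pointwise estimate is then Cauchy--Schwarz, $|\operatorname{tr} B_t| = \bigl|\sum_i\lambda_i\bigr| \le \sqrt{n}\,\bigl(\sum_i\lambda_i^2\bigr)^{1/2} = \sqrt{n}\,\sqrt{\operatorname{tr}(B_t^2)}$, which combined with the displayed derivative yields
\begin{equation*}
  \left|\frac{d}{dt}\sqrt{\det A_t}\right| \le \frac{\sqrt{n}}{2}\sqrt{\det A_t}\,\sqrt{\tr_{a_t}\bigl((a_t')^2\bigr)} = \frac{\sqrt{n}}{2}\sqrt{\langle a_t', a_t'\rangle^0_{a_t}}.
\end{equation*}

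Finally I would integrate over $[0,1]$: by the fundamental theorem of calculus and the triangle inequality for integrals,
\begin{equation*}
  \left|\sqrt{\det A_1} - \sqrt{\det A_0}\right| \le \int_0^1 \left|\frac{d}{dt}\sqrt{\det A_t}\right|dt \le \frac{\sqrt{n}}{2}\int_0^1 \sqrt{\langle a_t', a_t'\rangle^0_{a_t}}\,dt,
\end{equation*}
and the last integral is exactly the $\langle\,\cdot\,,\cdot\,\rangle^0$-length of the path $a_t$. Taking the infimum over all smooth paths from $a_0$ to $a_1$ in $\Matx$ then gives $\bigl|\sqrt{\det A_1} - \sqrt{\det A_0}\bigr| \le \frac{\sqrt{n}}{2}\,\theta^g_x(a_0, a_1)$, as claimed.

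The one step that will require genuine care is the linear-algebra inequality $|\operatorname{tr} B_t| \le \sqrt{n}\,\sqrt{\operatorname{tr}(B_t^2)}$: it rests on $B_t$ having a real spectrum, which I would justify by noting that $B_t$ is self-adjoint with respect to the inner product determined by $a_t$ (a consequence of the symmetry of $a_t'$). The determinant-derivative formula, the identity $\tr_{a_t}((a_t')^2) = \operatorname{tr}(B_t^2)$, and the recognition of the path-length integral as $\theta^g_x$ are all routine.
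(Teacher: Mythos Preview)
Your argument is correct. The derivative computation for $\sqrt{\det A_t}$, the identification $\tr_{a_t}((a_t')^2)=\operatorname{tr}(B_t^2)$, the Cauchy--Schwarz step (justified, as you note, by the $a_t$-self-adjointness of $B_t=a_t^{-1}a_t'$), and the passage from an arbitrary path to the infimum are all sound.

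As for comparison: the present paper does not supply a proof of this lemma at all---it is simply quoted from \cite[Lemma~4.10]{clarke:_compl_of_manif_of_rieman_metric}. Your argument is precisely the natural one (bound the derivative of $\sqrt{\det A_t}$ by the $\langle\cdot,\cdot\rangle^0$-speed via the trace inequality $|\operatorname{tr} B|\le\sqrt n\,(\operatorname{tr} B^2)^{1/2}$, integrate, and take the infimum over paths), and this is essentially the only reasonable route to the stated constant $\sqrt n/2$; it is what one would expect the cited proof to do as well.
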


It will be necessary for us to make a straightforward extension of
this result to $\cl(\Matx)$ using (\ref{eq:33}).

\begin{lemma}\label{lem:16}
  Let $a_0, a_1 \in \cl(\Matx)$.  Then
  \begin{equation}\label{eq:26}
    \left|
      \sqrt{\det A_1} - \sqrt{\det A_0}
    \right| \leq \frac{\sqrt{n}}{2} \theta^g_x (a_0, a_1).
  \end{equation}
\end{lemma}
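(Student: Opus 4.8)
The plan is to deduce Lemma \ref{lem:16} from Lemma \ref{lem:1} by approximation, using the description of the completion $(\overline{\Matx}, \theta^g_x)$ from Theorem \ref{thm:10}. Given $a_0, a_1 \in \cl(\Matx)$, choose sequences $\{a_0^k\}, \{a_1^k\} \subset \Matx$ converging to $a_0$ and $a_1$ respectively in the topology of $\satx$. First I would note that convergence in $\satx$ implies that the corresponding endomorphisms $A_\alpha^k = g(x)^{-1} a_\alpha^k$ converge to $A_\alpha = g(x)^{-1} a_\alpha$, and hence $\det A_\alpha^k \to \det A_\alpha$ since the determinant is a polynomial (thus continuous) in the entries. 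Consequently $\sqrt{\det A_\alpha^k} \to \sqrt{\det A_\alpha}$ by continuity of the square root on $[0, \infty)$ — note that $\det A_\alpha \geq 0$ since $a_\alpha$ is positive semidefinite, so the square root makes sense even though $a_\alpha$ may lie on $\partial \Matx$.

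Next I would apply Lemma \ref{lem:1} to the pair $a_0^k, a_1^k \in \Matx$ to obtain
\begin{equation*}
  \left| \sqrt{\det A_1^k} - \sqrt{\det A_0^k} \right| \leq \frac{\sqrt{n}}{2} \theta^g_x(a_0^k, a_1^k)
\end{equation*}
for every $k$. Taking $k \to \infty$, the left-hand side converges to $\left| \sqrt{\det A_1} - \sqrt{\det A_0} \right|$ by the first step, while the right-hand side converges to $\frac{\sqrt{n}}{2} \theta^g_x(a_0, a_1)$ by the defining formula (\ref{eq:33}) for the extended distance function on $\cl(\Matx)$ (equivalently $\overline{\Matx}$) from Theorem \ref{thm:10}. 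The inequality (\ref{eq:26}) then follows by passing to the limit in the inequality.

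One small point that needs care: the formula (\ref{eq:33}) asserts that the limit $\lim_k \theta^g_x(a_0^k, a_1^k)$ exists and is independent of the approximating sequences, so the right-hand side is well defined; this is exactly the content of Theorem \ref{thm:10}, which I am free to assume. If one wanted to be slightly more self-contained, one could observe that if $a_0$ or $a_1$ actually lies in $\Matx$ one may take the corresponding constant sequence, and in the general boundary case one simply invokes the theorem. I do not anticipate any genuine obstacle here — the only thing to watch is that the quantities $\sqrt{\det A_\alpha}$ appearing in the statement are honestly continuous functions of $a_\alpha \in \cl(\Matx)$, which is immediate, so the entire argument is a routine limiting argument and the ``hard part'' is essentially nonexistent beyond correctly citing Theorem \ref{thm:10} and Lemma \ref{lem:1}.
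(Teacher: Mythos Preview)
Your proof is correct and follows essentially the same approximation idea as the paper's, with a minor organizational difference: the paper does a short case split (both in $\Matx$; both in $\partial\Matx$; exactly one in $\partial\Matx$) and in the mixed case approximates only the boundary point by a $\theta^g_x$-convergent sequence, whereas you approximate both points simultaneously by $\satx$-convergent sequences and invoke \eqref{eq:33} directly. Your version is slightly more streamlined in that it avoids the case distinction, while the paper's version makes the degenerate cases explicit; substantively they are the same argument.
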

\begin{proof}
  Because of Lemma \ref{lem:1}, it only remains to deal with the case
  that at least one of $a_0$ or $a_1$ belongs to $\partial \Matx$.

  If both belong to $\partial \Matx$, then both sides of \eqref{eq:26}
  are zero, so there is nothing to prove.
  
  We are left with the case that only one belongs to $\partial \Matx$
  (let's say, without loss of generality, that it's $a_0$).  Let
  $\{a_0^k\}$ be a sequence in $\Matx$ that $\tgx$-converges to $a_0$.
  Then $\det A_0^k \rightarrow \det A_0 = 0$ by Theorem \ref{thm:10}, and we also
  have
  \begin{equation*}
    \left|
      \sqrt{\det A_1} - \sqrt{\det A_0^k}
    \right| \leq \frac{\sqrt{n}}{2} \theta^g_x (a_0^k, a_1).
  \end{equation*}
  Taking the limit as $k \rightarrow \infty$ of both sides gives the
  result.
\end{proof}

We can then integrate the estimate of the last lemma to get an
analogous result for $\TM$.

\begin{lemma}\label{lem:6}
  Let $Y \subseteq M$ be measurable.  Then the function $\Vol(Y,
  \sdot) : \Mf \rightarrow \R$ mapping $\tilde{g} \mapsto \Vol(M,
  \tilde{g})$ is Lipschitz continuous with respect to $\Theta_Y$.  In
  particular, if $g_0, g_1 \in \Mf$, then
  \begin{equation*}
    \abs{\Vol(M, g_1) - \Vol(M, g_0)} \leq \frac{\sqrt{n}}{2}
    \Theta_Y(g_0, g_1) \leq \frac{\sqrt{n}}{2}
    \Theta_M(g_0, g_1).
  \end{equation*}
\end{lemma}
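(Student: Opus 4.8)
The plan is to derive the displayed chain of inequalities by integrating the pointwise estimate of Lemma \ref{lem:16} over $M$, and then reading off Lipschitz continuity of the volume function. First I would fix $g_0, g_1 \in \Mf$ together with a measurable $Y \subseteq M$. For each $x \in M$, Lemma \ref{lem:16} applied to the tensors $g_0(x), g_1(x) \in \cl(\Matx)$ (recall that by Theorem \ref{thm:6}, elements of $\Mf$ take values in $\cl(\Matx)$ up to the deflated-set identification, and the quantities below are well-defined on $\overline{\Matx}$) gives
\begin{equation*}
  \left| \sqrt{\det G_1(x)} - \sqrt{\det G_0(x)} \right| \leq \frac{\sqrt{n}}{2} \theta^g_x(g_0(x), g_1(x)).
\end{equation*}
Integrating both sides against $\mu_g$ over $Y$, the left-hand integrand is measurable and the right-hand side integrates to $\frac{\sqrt{n}}{2}\Theta_Y(g_0,g_1)$ by the definition of $\Theta_Y$ (extended to $\Mf$ via \eqref{eq:92}).

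Next I would handle the left-hand side. By the triangle inequality for the $L^1(M,\mu_g)$ norm,
\begin{equation*}
  \left| \int_Y \sqrt{\det G_1}\, d\mu_g - \int_Y \sqrt{\det G_0}\, d\mu_g \right| \leq \int_Y \left| \sqrt{\det G_1} - \sqrt{\det G_0} \right| d\mu_g,
\end{equation*}
and since $\mu_{\tilde g} = \sqrt{\det \tilde G}\, \mu_g$ (the Radon--Nikodym identity recorded after Definition \ref{dfn:5}), the two integrals on the left are exactly $\Vol(Y, g_1)$ and $\Vol(Y, g_0)$. Combining the two displays yields
\begin{equation*}
  |\Vol(Y, g_1) - \Vol(Y, g_0)| \leq \frac{\sqrt{n}}{2}\Theta_Y(g_0, g_1),
\end{equation*}
which is precisely the assertion that $\Vol(Y,\sdot)$ is Lipschitz with respect to $\Theta_Y$ (with constant $\sqrt{n}/2$). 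The final inequality $\Theta_Y(g_0,g_1) \leq \Theta_M(g_0,g_1)$ is immediate from Definition \ref{dfn:9}, since the integrand $\theta^g_x(g_0(x),g_1(x))$ is nonnegative and $Y \subseteq M$, so enlarging the domain of integration can only increase the value. Specializing to $Y = M$ on the left-hand side gives the stated chain with $|\Vol(M,g_1) - \Vol(M,g_0)|$.

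I do not expect a serious obstacle here; this is a routine "integrate the pointwise bound" argument. The one point requiring a small amount of care is the measurability of $x \mapsto \theta^g_x(g_0(x), g_1(x))$ and the legitimacy of applying \eqref{eq:92} to general $\tilde g \in \Mf$ rather than to smooth metrics — but this is exactly what Proposition \ref{prop:6} provides, so it can be cited directly. A second minor point is that $g_0, g_1$ may be deflated on a nullset or on a set of positive measure; on the deflated set $\det G_\alpha = 0$, so the integrand $\sqrt{\det G_\alpha}$ vanishes there and contributes nothing, consistent with $\Vol(X_{g_\alpha}, g_\alpha) = 0$ as noted after Definition \ref{dfn:7}, and $\theta^g_x$ is still defined on $\overline{\Matx}$ by Theorem \ref{thm:10}. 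So the argument goes through verbatim on all of $\Mf$.
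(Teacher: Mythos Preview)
Your proposal is correct and follows essentially the same route as the paper's proof: integrate the pointwise determinant estimate over $Y$, bound the absolute value of the difference of integrals by the integral of the absolute value, and read off the Lipschitz bound. If anything, your citation of Lemma~\ref{lem:16} (the extension to $\cl(\Matx)$) is slightly more careful than the paper's own reference to the $\Matx$-only version, since $g_0, g_1 \in \Mf$ may be degenerate at some points.
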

\begin{proof}
  By \cite[Lemma
  4.10]{clarke:_compl_of_manif_of_rieman_metric}, we have
  that
  \begin{equation*}
    \abs{\sqrt{\det G_1(x)} - \sqrt{\det G_0(x)}} \leq
    \frac{\sqrt{n}}{2} \theta^g_x(g_0(x), g_1(x)).
  \end{equation*}
  Using this, we can estimate
  \begin{equation*}
    \begin{aligned}
      \abs{\Vol(M, g_1) - \Vol(M, g_0)} &=
      \abs{\integral{Y}{}{}{d\mu_{g_1}} -
        \integral{Y}{}{}{d\mu_{g_0}}} \\
      &= \abs{\integral{Y}{}{ \left(
            \sqrt{\det G_1(x)} - \sqrt{\det G_0(x)}
          \right)}{d\mu_g}} \\
      &\leq \integral{Y}{}{\abs{\sqrt{\det G_1(x)} - \sqrt{\det
            G_0(x)}}}{d \mu_g} \\
      &\leq \frac{\sqrt{n}}{2} \integral{Y}{}{\theta^g_x(g_0(x),
        g_1(x))}{d\mu_g} \\
      &= \frac{\sqrt{n}}{2} \Theta_Y(g_0, g_1) \leq \frac{\sqrt{n}}{2}
      \Theta_M(g_0, g_1).
    \end{aligned}
  \end{equation*}
\end{proof}

\subsection{The completion of $(\M, \TM)$}\label{sec:completion-m-tm}

The above lemma suggests a strong parallel with the behavior of
$d$---again, compare Lemma \ref{lem:3}.  In fact, the next two
theorems will give us even stronger parallels, as we will see that the
completion of $(\M, \TM)$ can be identified with a quotient space of
$\Mf$.  We begin with a proof that to each $\TM$-Cauchy sequence, we
can associate a limit semimetric in $\Mm$.

Before we prove these theorems, let us first remark that for a
sequence $\{g_k\} \subset \Mm$, we define $\tgx$-convergence in
measure to $g_0 \in \Mm$ analogously to how we defined it for
$\absgx{\, \cdot \,}$.  Again, if the measure is not explicitly
mentioned, then $\mu_g$ is implied.

\begin{theorem}\label{thm:4}
  Let $\{g_k\} \subset \Mf$ be a $\Theta_M$-Cauchy sequence.  Then
  there exists an element $[g_0] \in \Mmhat$ such that $g_k
  \xrightarrow{\Theta_M} [g_0]$.  (In particular, $\TM(g_k, [g_0])$ is
  well-defined and finite for each $k \in \N$.)  Furthermore, if $g_0
  \in [g_0]$ is any representative, then we have that $\{g_k\}$
  $\theta^g_x$-converges to $g_0$ in measure, and $X_{g_0} =
  D_{\{g_k\}}$ (cf.~Definition \ref{dfn:7}) up to a nullset.

  Finally, if $\{\tilde{g}_k\} \subset \Mf$ is any $\TM$-Cauchy
  sequence that $\tgx$-converges to $\tilde{g}_0 \in \Mm$ in measure,
  then $\tilde{g}_k \xrightarrow{\Theta_M} \tilde{g}_0$.
\end{theorem}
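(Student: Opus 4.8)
The plan is to build a candidate limit semimetric $g_0$ by using the pointwise completions $\overline{\Matx}$ and the characterization of $\TM$-convergence in terms of $L^1$-convergence of the pointwise distance functions $\tgx$. First I would fix a reference representative as in Convention \ref{cvt:2} and, for each $k$, regard $x \mapsto \tgx(g_k(x), g_l(x))$ as an $L^1(M,\mu_g)$ function; the $\Theta_M$-Cauchy hypothesis says these form a Cauchy net in $L^1$ as $k,l\to\infty$. Standard $L^p$ theory (the Riesz–Fischer argument: pass to a rapidly-Cauchy subsequence $\{g_{k_j}\}$ with $\sum_j \Theta_M(g_{k_j},g_{k_{j+1}}) < \infty$) then gives that for a.e.\ $x$, the sequence $\{g_{k_j}(x)\}$ is $\tgx$-Cauchy in $\Matx$, hence converges in the completion $\overline{\Matx}$ by Theorem \ref{thm:10}. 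Define $g_0(x)$ to be (a representative of) this limit in $\cl(\Matx)$; this is measurable as an a.e.-pointwise limit of measurable sections, so $g_0 \in \Mm$, and $[g_0] \in \Mmhat$ is well-defined independently of the choices. Convergence in measure of the full sequence $\{g_k\}$ to $g_0$ then follows from the $L^1$-Cauchy property together with Chebyshev's inequality applied to $x \mapsto \tgx(g_k(x), g_0(x))$, and the identification $X_{g_0} = D_{\{g_k\}}$ up to a nullset comes from Lemma \ref{lem:16}: the pointwise estimate $|\sqrt{\det \tilde G_k(x)} - \sqrt{\det G_0(x)}| \leq \frac{\sqrt n}{2}\tgx(g_k(x), g_0(x))$ forces $\det G_k(x) \to \det G_0(x)$ wherever $\tgx(g_k(x),g_0(x))\to 0$, and the deflated set is exactly where $\det G_0(x) = 0$.

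Next I would verify that $\Theta_M(g_k, [g_0])$ is well-defined and finite and that $g_k \xrightarrow{\Theta_M} [g_0]$. Using the extension formula (\ref{eq:33})/(\ref{eq:30}) pointwise, $\tgx(g_k(x), g_0(x)) = \lim_{j} \tgx(g_k(x), g_{k_j}(x))$ for a.e.\ $x$; by Fatou's lemma applied to this a.e.\ limit, $\int_M \tgx(g_k(x), g_0(x))\, d\mu_g \leq \liminf_j \Theta_M(g_k, g_{k_j}) < \infty$, so $\TM(g_k, [g_0])$ is finite. Then, given $\epsilon > 0$, choose $N$ so that $\Theta_M(g_k, g_l) < \epsilon$ for $k, l \geq N$; for fixed $k \geq N$, Fatou again gives $\TM(g_k, [g_0]) \leq \liminf_j \Theta_M(g_k, g_{k_j}) \leq \epsilon$, which is exactly $g_k \xrightarrow{\Theta_M} [g_0]$.

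The final sentence—that any $\TM$-Cauchy sequence $\{\tilde g_k\}$ which $\tgx$-converges in measure to some $\tilde g_0 \in \Mm$ in fact $\Theta_M$-converges to $\tilde g_0$—I would handle by uniqueness of limits plus the first part. Apply the main construction to $\{\tilde g_k\}$ to get some $[\hat g_0] \in \Mmhat$ with $\tilde g_k \xrightarrow{\Theta_M} [\hat g_0]$ and $\{\tilde g_k\}$ $\tgx$-converging to $\hat g_0$ in measure. Now both $\hat g_0$ and $\tilde g_0$ are measure-limits of $\{\tilde g_k\}$, so after passing to a common subsequence converging $\tgx$-a.e., the a.e.\ limits agree in $\overline{\Matx}$, whence $[\hat g_0] = [\tilde g_0]$ in $\Mmhat$; in particular $\tilde g_k \xrightarrow{\Theta_M} \tilde g_0$. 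The main obstacle I anticipate is not any single estimate but the bookkeeping around the two equivalence relations: one must check carefully that the pointwise limit in $\cl(\Matx)$ descends to a well-defined element of $\Mmhat$ (i.e.\ that different choices of $\tgx$-Cauchy-a.e.\ subsequences give $\sim$-equivalent semimetrics, which is where the $X_{g_0} = D_{\{g_k\}}$ identification and the definition of $\sim$ must be invoked), and that "$\tgx$-convergence in measure" is strong enough to pin down the equivalence class—both of which rely essentially on Lemma \ref{lem:16} to control where deflation occurs.
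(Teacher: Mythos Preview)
Your proposal is correct and follows essentially the same skeleton as the paper's proof: pass to a rapidly $\TM$-Cauchy subsequence to obtain $\tgx$-Cauchy a.e., invoke Theorem~\ref{thm:10} to build $g_0$ pointwise, verify $L^1$ convergence of $x \mapsto \tgx(g_k(x),g_0(x))$, then read off convergence in measure and the deflated-set identity via Lemma~\ref{lem:16}, and finish with uniqueness of measure-limits. The one noteworthy technical difference is that you use Fatou's lemma to get finiteness of $\TM(g_k,[g_0])$ and then $\TM(g_k,[g_0]) \leq \liminf_j \TM(g_k,g_{k_j}) \leq \epsilon$, whereas the paper constructs an explicit $L^1$ majorant $\alpha(x) = \sum_m \tgx(g_m(x),g_{m+1}(x))$ and applies Dominated Convergence; your route is slightly shorter and avoids the monotone-convergence side argument needed to show $\alpha \in L^1$.

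One small expository point: in your first paragraph you invoke Chebyshev on $x \mapsto \tgx(g_k(x),g_0(x))$ to deduce convergence in measure \emph{before} establishing $\TM(g_k,g_0) \to 0$, but Chebyshev needs exactly that input. Just swap the order: prove $\TM$-convergence (your second paragraph) first, then convergence in measure and $X_{g_0}=D_{\{g_k\}}$ follow immediately, as in the paper.
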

\begin{proof}
  We begin with the first statement.  Since $\{g_k\}$ is a
  $\TM$-Cauchy sequence, it suffices to prove convergence for a
  subsequence.  By passing to a subsequence, we may assume that
  \begin{equation}\label{eq:18}
    \sum_{k = 1}^\infty \TM(g_k, g_{k+1}) < \infty.
  \end{equation}

  Now, for all $\delta > 0$ and $k, l \in \N$, let $E^{k,l}_\delta :=
  \{ x \in M \mid \theta^g_x(g_k, g_l) \geq \delta \}$.  For all
  $\epsilon > 0$, we can find $k_0 \in \N$ such that if $k, l \geq
  k_0$, then $\Vol(E^{k,l}_\delta, g) < \epsilon$.  (In other words,
  $\{g_k\}$ is $\theta^g_x$-Cauchy in measure.)  A straightforward
  argument shows that by again passing to a subsequence, we can assume
  that $\{g_k\}$ is $\theta^g_x$-Cauchy a.e.  Therefore, by Theorem
  \ref{thm:10}, for almost every $x \in M$, $\{g_k(x)\}$
  $\tgx$-converges to some element $[a_x] \in \overline{\Matx}$.  For
  each $x \in M$, let $g_0(x) := a_x$, where $a_x \in [a_x]$ is any
  representative.  (Note that $g_0$ is well-defined up to a set of
  measure zero, where we may without consequence set it equal to
  zero.)  We claim that $[g_0]$ is the desired limit element.
  
  Choose any representative $g_0 \in [g_0]$.  (Note that the choice of
  representative does not affect the quantity $\TM(g_k, g_0)$.)  Then
  for a.e.~$x \in M$ and $k \in \N$, $\tgx(g_k(x), g_0(x))$ is finite,
  positive, and independent of our choice of the representative $g_0$.
  Fix some $k \in \N$.  For each $l \in \N$, define functions by
  $f_{k,l}(x) := \tgx(g_k(x), g_l(x))$, and also define $f_k(x) :=
  \tgx(g_k(x), g_0(x))$.  Then by construction, we have that $\lim_{l
    \rightarrow \infty} f_{k,l}(x) = f_k(x)$ for a.e.~$x \in M$.
  Furthermore, if we define
  \begin{equation*}
    \alpha(x) := \sum_{m=1}^\infty \tgx(g_m, g_{m+1}),
  \end{equation*}
  then by the triangle inequality,
  \begin{equation*}
    \abs{f_{k,l}(x)} \leq \sum_{m=k}^{l-1} \tgx(g_m, g_{m+1}) \leq
    \alpha(x).
  \end{equation*}
  On the other hand, we claim that $\alpha \in L^1(M, g)$, since
  \begin{equation*}
    \integral{M}{}{\alpha}{d \mu_g} = \integral{M}{}{\sum_{m=1}^\infty
      \tgx(g_m, g_{m+1})}{d \mu_g} = \sum_{m=1}^\infty \TM(g_m,
    g_{m+1}) < \infty.
  \end{equation*}
  (Note here that we have used the assumption \eqref{eq:18}, and that
  we have implicitly exchanged an infinite sum and an integral in the
  second equality.  The latter is justified by an application of the
  Monotone Convergence Theorem of Lebesgue and Levi
  \cite[Thm.~2.8.2]{bogachev07:_measur_theor}---see the proof of Lemma
  4.17 in \cite{clarked):_compl_of_manif_of_rieman} for the full
  details of this argument.)
  
  Since for a.e.~$x \in M$, $\lim_{l \rightarrow \infty} f_{k,l}(x) =
  f_k(x)$, $\abs{f_{k,l}(x)} \leq \alpha(x)$, and $\alpha \in L^1(M,
  g)$, the Lebesgue Dominated Convergence Theorem applies to give that
  $f_k \in L^1(M, g)$ and
  \begin{equation*}
    \integral{M}{}{f_k}{d \mu_g} = \lim_{l \rightarrow \infty} \integral{M}{}{f_{k,l}}{d \mu_g}.
  \end{equation*}
  In other words, $\lim_{l \rightarrow \infty} \TM(g_k, g_l) =
  \TM(g_k, g_0) < \infty$.  In particular, $\TM(g_k, g_0)$ is
  well-defined and finite, as claimed in the theorem.

  To see that $\lim_{k \rightarrow \infty} \TM(g_k, g_0) = 0$, one
  must apply essentially the same argument to the sequence of
  functions $f_k(x) = \tgx(g_k(x), g_0(x))$ from above with the limit
  function $0$.  The function that bounds each $f_k(x)$ is
  \begin{equation*}
    \tgx(g_0(x), g_1(x)) + \sum_{m = 1}^\infty \tgx(g_m(x), g_{m+1}(x)),
  \end{equation*}
  which we claim is in $L^1(M, g)$.  For, by a special case of the
  preceding argument, $\TM(g_1, g_0) < \infty$, which is equivalent to
  saying that the first term is $L^1$.  But this already implies the
  claim, since the sum is just $\alpha(x)$, which we saw was $L^1$ in
  the preceding argument.

  Thus, we now have that $g_k \xrightarrow{\TM} [g_0]$, or in other
  words, $\tgx(g_k(x), g_0(x)) \xrightarrow{L^1(M, g)} 0$, implying
  that $\tgx(g_k(x), g_0(x)) \rightarrow 0$ in measure by Theorem
  \ref{thm:11}.  But this is exactly the assertion that $\{g_k\}$
  $\tgx$-converges to $g_0$ in measure.  From this, and \cite[Lemma
  4.10]{clarke:_compl_of_manif_of_rieman_metric}, one can
  also deduce that $D_{\{g_k\}} = X_{g_0}$ up to a nullset.

  Finally, let $\{\tilde{g}_k\} \subset \Mf$ be any $\TM$-Cauchy
  sequence that $\tgx$-converges to $\tilde{g}_0 \in \Mm$ in measure.
  We know that $\{\tilde{g}_k\}$ $\TM$-converges to some limit
  $\bar{g}_0 \in \Mm$, and that $\{\tilde{g}_k\}$ $\tgx$-converges to
  $\bar{g}_0$ in measure.  So it is not hard to see that off of the
  set where both $\tilde{g}_0(x)$ and $\bar{g}_0(x)$ are degenerate
  (that is, off of the set where it is possible that $\tilde{g}_0(x)
  \neq \bar{g}_0(x)$ but $\theta^g_x(\tilde{g}_0(x), \bar{g}_0(x)) =
  0$), $\tilde{g}_0$ and $\bar{g}_0$ must coincide a.e.  In other
  words, $[\tilde{g}_0] = [\bar{g}_0]$.  This implies that
  $\TM(\tilde{g}_k, \tilde{g}_0) = \TM(\tilde{g}_k, \bar{g}_0)$, and
  so $\tilde{g}_k \rightarrow \tilde{g}_0$, as was to be shown.
\end{proof}

Knowing now that the $\TM$-limit of a Cauchy sequence can be
identified with an element of $\Mm$, we demonstrate that this limit
must have finite volume, and thus actually lies in $\Mf$.

\begin{theorem}\label{thm:5}
  Let $\{g_k\} \subset \Mf$ $\Theta_M$-converge to $g_0 \in \Mm$.
  Then in fact $g_0 \in \Mf$ and the following hold:
  \begin{enumerate}
  \item\label{item:5} We have
    \begin{equation*}
      \left(
        \frac{\mu_{g_k}}{\mu_g}
      \right) \xrightarrow{L^1(M, g)}
      \left(
        \frac{\mu_{g_0}}{\mu_g}
      \right).
    \end{equation*}
    In particular, $\{ ( \mu_{g_\alpha} / \mu_g ) \mid \alpha = 0, 1,
    2, \dots \}$ is uniformly absolutely continuous with respect to
    $\mu_g$.
  \item\label{item:6} $\mu_{g_k}$ converges uniformly to $\mu_{g_0}$.
  \end{enumerate}
\end{theorem}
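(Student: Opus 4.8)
The plan is to reduce the whole statement to a single pointwise determinant estimate, integrate it to obtain $L^1$-convergence of the density functions $\sqrt{\det G_k}$, and then feed this into the measure-theoretic lemmas already established. The starting point is Theorem \ref{thm:4}, which gives that $\TM(g_k, g_0) = \int_M \theta^g_x(g_k(x), g_0(x))\,d\mu_g$ is finite for each $k$ and tends to $0$. The key observation is that, for any semimetric $\tilde g$, one has $\mu_{\tilde g} = \sqrt{\det \tilde G}\,\mu_g$, so the Radon--Nikodym derivative $(\mu_{g_k}/\mu_g)$ is exactly $\sqrt{\det G_k}$, and likewise $(\mu_{g_0}/\mu_g) = \sqrt{\det G_0}$.

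First I would apply Lemma \ref{lem:16} pointwise --- this is legitimate since $g_k(x) \in \Matx$ and, by Theorem \ref{thm:10}, $g_0(x) \in \cl(\Matx)$ --- to obtain
\[
  \left| \sqrt{\det G_k(x)} - \sqrt{\det G_0(x)} \right| \leq \frac{\sqrt{n}}{2}\, \theta^g_x\bigl(g_k(x), g_0(x)\bigr) \qquad \textnormal{for a.e.\ } x \in M,
\]
and then integrate over $M$ against $\mu_g$. The right-hand side integrates to $\tfrac{\sqrt n}{2}\TM(g_k, g_0) < \infty$, so $\sqrt{\det G_k} - \sqrt{\det G_0} \in L^1(M, g)$; since $g_k \in \Mf$ gives $\sqrt{\det G_k} \in L^1(M, g)$, this forces $\sqrt{\det G_0} \in L^1(M, g)$, i.e.\ $\Vol(M, g_0) = \int_M \sqrt{\det G_0}\,d\mu_g < \infty$, so $g_0 \in \Mf$. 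The same integrated inequality reads $\int_M |\sqrt{\det G_k} - \sqrt{\det G_0}|\,d\mu_g \le \tfrac{\sqrt n}{2}\TM(g_k, g_0) \to 0$, which is precisely statement \eqref{item:5}. For the uniform absolute continuity clause, $L^1$-convergence on the finite measure space $(M, \mu_g)$ together with Theorem \ref{thm:11} (with $p = 1$) shows that $\{(\mu_{g_k}/\mu_g) \mid k \ge 1\}$ is uniformly absolutely continuous; adjoining the single function $(\mu_{g_0}/\mu_g) \in L^1(M, g)$, which is uniformly absolutely continuous on its own because $\mu_g(M) < \infty$, preserves this property.

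Statement \eqref{item:6} then follows from Lemma \ref{lem:18} applied with $\mu_k = \mu_{g_k}$, $\mu = \mu_{g_0}$, and $\nu = \mu_g$: all three measures are nonnegative and of finite total mass (using $g_k, g_0 \in \Mf$ and that $M$ is closed), $\mu_{g_k}$ and $\mu_{g_0}$ are absolutely continuous with respect to $\mu_g$, and their Radon--Nikodym derivatives $\sqrt{\det G_k}$ and $\sqrt{\det G_0}$ converge in $L^1(M, g)$ by the previous paragraph --- which is exactly the hypothesis of Lemma \ref{lem:18}, yielding uniform convergence $\mu_{g_k} \to \mu_{g_0}$. I do not expect a real obstacle here: the only delicate point is that one must know $g_0 \in \Mf$ before invoking Lemma \ref{lem:18}, and this is exactly what the $L^1$-closedness step above secures --- which in turn is why it is important that Lemma \ref{lem:16} holds on all of $\cl(\Matx)$ and not merely on $\Matx$.
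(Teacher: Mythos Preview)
Your proof is correct and follows essentially the same route as the paper: apply Lemma \ref{lem:16} pointwise, integrate to bound $\|\sqrt{\det G_k}-\sqrt{\det G_0}\|_{L^1}$ by $\tfrac{\sqrt n}{2}\TM(g_k,g_0)$ (which simultaneously gives $g_0\in\Mf$ and statement \eqref{item:5}), then invoke Theorem \ref{thm:11} for uniform absolute continuity and Lemma \ref{lem:18} for statement \eqref{item:6}. One tiny slip: since $g_k\in\Mf$ (not $\M$), you only have $g_k(x)\in\cl(\Matx)$ rather than $\Matx$, but Lemma \ref{lem:16} is stated for all of $\cl(\Matx)$, so the argument is unaffected.
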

\begin{proof}
  We first prove statement \eqref{item:5}.  By Lemma \ref{lem:16}, we
  have
  \begin{equation*}
    \begin{aligned}
      \integral{M}{}{\abs{\left( \frac{\mu_{g_0}}{\mu_g} \right) -
          \left( \frac{\mu_{g_k}}{\mu_g} \right)}}{d \mu_g} &\leq
      \frac{\sqrt{n}}{2} \integral{M}{}{\tgx(g_k(x), g_0(x))}{d \mu_g}
      = \TM(g_k, g_0).
    \end{aligned}
  \end{equation*}
  The statement follows from this immediately.  Note that this also
  implies that we haveo $(\mu_{g_0} / \mu_g) \in L^1(M, g)$, or in
  other words, $g_0$ has finite total volume.  The uniform absolute
  continuity of $\{ ( \mu_{g_\alpha} / \mu_g ) \mid \alpha = 0, 1, 2,
  \dots \}$ is given by Theorem \ref{thm:11}.

  Statement \eqref{item:6} then follows from statement \eqref{item:5}
  and Lemma \ref{lem:18}.
\end{proof}

The above result implies, as noted, that $\overline{(\M, \TM)}$ is a
quotient space of some subspace of $\Mf$.  In fact, it is the same
space as $\overline{(\M, d)}$:

\begin{theorem}\label{thm:12}
  The completion of $\M$ with respect to $\TM$ can be naturally
  identified with $\Mfhat$.  This map is an isometry if we define
  $\TM$ on $\Mfhat$ by
  \begin{equation*}
    \TM([g_0], [g_1]) = \integral{M}{}{\tgx(g_0(x), g_1(x))}{d \mu_g}
  \end{equation*}
  for any $g_0, g_1 \in \Mf$.
\end{theorem}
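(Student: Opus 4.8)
The plan is to assemble the identification $\overline{(\M, \TM)} \cong \Mfhat$ from the three pieces already in place: Theorem \ref{thm:4} (every $\TM$-Cauchy sequence has a limit $[g_0] \in \Mmhat$), Theorem \ref{thm:5} (that limit in fact lies in $\Mfhat$), and the formula \eqref{eq:92} expressing $\Theta_M$ on $\M$ as an honest integral. First I would define the candidate map $\Psi : \overline{(\M, \TM)} \to \Mfhat$ by sending the equivalence class of a $\TM$-Cauchy sequence $\{g_k\}$ to the class $[g_0]$ produced by Theorem \ref{thm:4}; Theorem \ref{thm:5} guarantees this lands in $\Mfhat$. One must check this is well-defined: if $\{g_k\}$ and $\{\tilde g_k\}$ are $\TM$-equivalent (i.e.\ $\TM(g_k, \tilde g_k) \to 0$), then they $\tgx$-converge in measure to the same element of $\Mmhat$ — this follows from the triangle inequality $\TM(\tilde g_k, g_0) \le \TM(\tilde g_k, g_k) + \TM(g_k, g_0) \to 0$ together with the last statement of Theorem \ref{thm:4}.

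Next I would verify that $\Psi$ is a bijection onto $\Mfhat$. Surjectivity: given $[g_0] \in \Mfhat$, I need a $\TM$-Cauchy sequence in $\M$ converging to it. Here I would use the same kind of truncation-and-exhaustion construction as in Proposition \ref{prop:1} — set $g^k_0 := \chi(E^k) g_0 + \chi(M \setminus E^k) g$ where $E^k$ is the set where $g_0$ has eigenvalues and coefficients controlled at level $k$ — and then use Lemma \ref{lem:16} integrated (i.e.\ the estimate bounding $\TM$ by volume of the symmetric difference, analogous to how Theorem \ref{thm:8} was used) to show $\{g^k_0\}$ is $\TM$-Cauchy, while $g^k_0 \to g_0$ pointwise a.e.\ gives $\tgx$-convergence in measure, hence $\TM$-convergence by the last part of Theorem \ref{thm:4}. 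Injectivity: if $\Psi([\{g_k\}]) = \Psi([\{\tilde g_k\}]) = [g_0]$, then both sequences $\TM$-converge to $g_0$, so $\TM(g_k, \tilde g_k) \le \TM(g_k, g_0) + \TM(g_0, \tilde g_k) \to 0$, meaning the two Cauchy sequences are $\TM$-equivalent.

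Finally, for the isometry statement I would show that under $\Psi$, the extended metric $\TM$ on $\overline{(\M,\TM)}$ (defined as $\lim_k \TM(g_k, \tilde g_k)$ for representing Cauchy sequences) equals $\int_M \tgx(g_0(x), g_1(x))\, d\mu_g$. Given $\TM$-Cauchy sequences $\{g^0_k\} \to [g_0]$ and $\{g^1_k\} \to [g_1]$ in $\Mfhat$, the triangle inequality gives $|\TM(g^0_k, g^1_k) - \TM(g^0_k, g_1)| \le \TM(g^1_k, g_1) \to 0$ and similarly in the other slot, so $\lim_k \TM(g^0_k, g^1_k) = \TM(g_0, g_1)$, the latter being well-defined and finite by the first part of Theorem \ref{thm:4} (applied to, say, the sequence $g^0_1, g^1_1, g^0_2, \dots$, or more simply by noting $\TM(g_0,g_1) \le \TM(g_0, g^0_k) + \TM(g^0_k, g^1_k) + \TM(g^1_k, g_1)$). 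The integral formula for $\TM(g_0,g_1)$ with $g_0, g_1 \in \Mf$ is then exactly the extension already recorded in the discussion following Proposition \ref{prop:6} (or one re-derives it by approximating each $g_\alpha$ by the truncations above and passing to the limit under the dominated convergence argument from Theorem \ref{thm:4}).

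I expect the main obstacle to be surjectivity — specifically, producing a genuinely $\TM$-Cauchy approximating sequence in $\M$ for an arbitrary $[g_0] \in \Mfhat$. The pointwise-a.e.\ convergence of the truncations is immediate, but establishing the Cauchy property requires an integrated version of the pointwise estimate in Lemma \ref{lem:16}: one needs that $\TM(g^k_0, g^{k+l}_0)$ is controlled by the $\mu_g$- and $\mu_{g_0}$-volumes of the shrinking sets $E^{k+l} \setminus E^k$, and that these volumes tend to zero uniformly in $l$ — the latter by dominated convergence exactly as in the proof of Proposition \ref{prop:1}. The one subtlety is that $\tgx$ is only a \emph{pseudo}metric when the relevant tensors are degenerate, but since we are integrating and the deflated set carries zero $\mu_{g_0}$-volume, this causes no trouble; the bookkeeping just has to be done carefully, mirroring the $d$-case.
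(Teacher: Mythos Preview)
Your overall architecture is correct and matches the paper's, but the paper takes a much shorter route for surjectivity: rather than building an approximating sequence by hand, it simply invokes Theorem~\ref{thm:6} to produce a $d$-Cauchy sequence in $\M$ that $\omega$-converges to $g_0$, and then observes via the inequality in Proposition~\ref{prop:6} that any $d$-Cauchy sequence is automatically $\TM$-Cauchy. This recycles the hard work already done for the $d$-completion and avoids any new construction. The paper also handles the isometry and injectivity in one stroke by noting that formula~\eqref{eq:92} already gives the integral expression for $\TM$ on $\Mf$, and then checking directly that $\TM(g_0,g_1)=0$ iff $g_0 \sim g_1$.

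Your direct truncation argument for surjectivity has a small but real gap: the cut-offs $g_0^k := \chi(E^k) g_0 + \chi(M\setminus E^k) g$ are bounded, measurable, and positive definite, but they are \emph{not smooth}, so they do not lie in $\M$. You therefore have not produced a $\TM$-Cauchy sequence in $\M$ itself. This is fixable --- each $g_0^k$ lies in $\U_k^0$ for some amenable $\U_k$, so you can $L^2$-approximate it by a smooth metric in $\U_k$, and then use Theorem~\ref{thm:7} together with Proposition~\ref{prop:6} to translate $L^2$-closeness into $\TM$-closeness --- but once you invoke that machinery you are essentially back to the paper's route through the $d$-metric anyway. So the ``self-contained'' advantage of your approach largely evaporates, and the paper's shortcut via Theorem~\ref{thm:6} is the cleaner way to go.
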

\begin{proof}
  By Theorems \ref{thm:4} and \ref{thm:5}, any $\TM$-Cauchy sequence
  in $\M$ $\TM$-converges to an element of $\Mf$.  Furthermore, if
  $g_0 \in \Mf$ is any element, then there exists a $d$-Cauchy
  sequence that $d$-converges to $g_0$ (cf.~Theorem \ref{thm:6}).  By
  the estimate of Proposition \ref{prop:6}, one can see that this
  sequence is also $\TM$-Cauchy.  Thus, any element of $\Mf$ arises as
  the $\TM$-limit of some sequence in $\M$.  This proves that
  $\overline{(\M, \TM)}$ is a quotient space of $\Mf$ given by
  identifying all elements with distance zero from one another.
  Furthermore, Proposition \ref{prop:6} implies the formula
  \begin{equation}\label{eq:34}
    \TM(g_0, g_1) = \integral{M}{}{\tgx(g_0(x), g_1(x))}{d \mu_g}
  \end{equation}
  for any $g_0, g_1 \in \Mf$.

  But from (\ref{eq:34}) and the fact that $\tgx$ is a metric on
  $\Matx$ and zero on $\partial \Matx$, it is easy to see that if
  $g_0, g_1 \in \Mf$, then $g_0 \sim g_1$ if and only if $\TM(g_0,
  g_1) = 0$.  Thus the desired quotient space of $\Mf$ is exactly
  $\Mfhat = \Mf / {\sim}$.
\end{proof}

We now have a good understanding of the metric $\TM$, but before we
leave this section, we will prove a result that will come in useful
later.  It is based on the following pointwise result, which was
already known.

\begin{proposition}[{\cite[Prop.~4.13]{clarke:_compl_of_manif_of_rieman_metric}}]\label{prop:7}
  Let $a_0, a_1 \in \M_x$.  Then there exists a constant $C'(n)$,
  depending only on $n$, such that
  \begin{equation*}
    \theta^g_x(a_0, a_1) \leq C'(n) \left( \sqrt{\det A_0} +
      \sqrt{\det A_1} \right).
  \end{equation*}
\end{proposition}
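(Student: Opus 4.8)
The plan is to bound $\theta^g_x(a_0,a_1)$ from above by the length of an explicit concatenation of three paths lying in $\Matx$, exploiting the way the metric $\langle\cdot,\cdot\rangle^0$ transforms under the radial scaling $a\mapsto\lambda a$. The starting observation is the homogeneity identity: for $\lambda>0$ and $b,c\in\satx$ one has $\langle b,c\rangle^0_{\lambda a}=\lambda^{n-2}\langle b,c\rangle^0_a$, which is immediate from $\tr_{\lambda a}(bc)=\lambda^{-2}\tr_a(bc)$ and $\det(\lambda A)=\lambda^n\det A$. Consequently, for any smooth path $\gamma$ in $\Matx$ the dilated path $\lambda\gamma$ has $\langle\cdot,\cdot\rangle^0$-length equal to $\lambda^{n/2}$ times that of $\gamma$, since its velocity is $\lambda\gamma'$ and $\abs{\lambda\gamma'(t)}^0_{\lambda\gamma(t)}=\lambda\cdot\lambda^{(n-2)/2}\abs{\gamma'(t)}^0_{\gamma(t)}$.

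Next I would analyze the radial ray $c^a_t:=(1-t)^{4/n}a$, which lies in $\Matx$ for $t\in[0,1)$ and (by Proposition~\ref{prop:8}) is in fact a geodesic, though I only use it here as a test path. Writing $c^a_t=\lambda(t)a$ with $\lambda(t)=(1-t)^{4/n}$ and combining the homogeneity identity with $\langle a,a\rangle^0_a=\tr_a(a^2)\det A=n\det A$, a short computation in which the exponents collapse via $4/n-1+2-4/n=1$ yields $\abs{(c^a_t)'}^0_{c^a_t}=\tfrac{4}{\sqrt n}(1-t)\sqrt{\det A}$. Hence the $\theta^g_x$-length of $c^a$ over any interval $[0,s_0]$ with $s_0<1$ equals $\tfrac{2}{\sqrt n}\bigl(1-(1-s_0)^2\bigr)\sqrt{\det A}\le\tfrac{2}{\sqrt n}\sqrt{\det A}$.

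Now fix $s_0\in(0,1)$, set $\lambda:=(1-s_0)^{4/n}$, and let $\gamma$ be the straight-line path $t\mapsto(1-t)a_0+ta_1$ from $a_0$ to $a_1$, which stays in $\Matx$ by convexity and has compact image there, hence finite $\theta^g_x$-length $L(\gamma)<\infty$ depending only on $a_0,a_1$. Concatenate the three paths: (i) the radial segment $c^{a_0}$ from $a_0$ to $\lambda a_0$; (ii) the dilation $\lambda\gamma$, running from $\lambda a_0$ to $\lambda a_1$; (iii) the radial segment $c^{a_1}$ traversed in reverse from $\lambda a_1$ to $a_1$. By the previous paragraph, segments (i) and (iii) have lengths at most $\tfrac{2}{\sqrt n}\sqrt{\det A_0}$ and $\tfrac{2}{\sqrt n}\sqrt{\det A_1}$, while by the first paragraph segment (ii) has length $\lambda^{n/2}L(\gamma)=(1-s_0)^2L(\gamma)$. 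Therefore $\theta^g_x(a_0,a_1)\le\tfrac{2}{\sqrt n}\bigl(\sqrt{\det A_0}+\sqrt{\det A_1}\bigr)+(1-s_0)^2L(\gamma)$ for every $s_0<1$, and letting $s_0\to1$ gives the proposition with $C'(n)=\tfrac{2}{\sqrt n}$.

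The one delicate point — the main obstacle — is controlling the middle segment (ii): one must be sure that the prefactor really vanishes in the limit, i.e.\ that $L(\gamma)$ is a finite constant independent of $s_0$ (it is, being the length of a single fixed path with compact image in $\Matx$), so that scaling it by $(1-s_0)^2$ annihilates it as $s_0\to1$. Everything else is the bookkeeping of two elementary length integrals and the exponent arithmetic above. It is worth noting that this estimate is the pointwise shadow of Theorem~\ref{thm:8}: it says that in the completion $\overline{\Matx}$ every point lies within distance $\tfrac{2}{\sqrt n}\sqrt{\det A}$ of the single collapsed boundary point, and applying the triangle inequality through that point is exactly the asserted inequality.
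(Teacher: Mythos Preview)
Your argument is correct. The paper does not prove this proposition; it is quoted from \cite[Prop.~4.13]{clarke:_compl_of_manif_of_rieman_metric}, so there is no in-paper proof to compare against line by line. That said, your strategy is exactly the one suggested by the surrounding material: the radial path $t\mapsto(1-t)^{4/n}a$ is the pointwise instance of the geodesic in Proposition~\ref{prop:8}, and your final inequality is precisely the statement that every $a\in\Matx$ lies within $\tfrac{2}{\sqrt n}\sqrt{\det A}$ of the collapsed boundary point in $\overline{\Matx}$ (Theorem~\ref{thm:10}), followed by the triangle inequality through that point. This is the pointwise shadow of Theorem~\ref{thm:8}, as you note, and your explicit constant $C'(n)=2/\sqrt n$ is sharp for this method. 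The computations (homogeneity $\langle\cdot,\cdot\rangle^0_{\lambda a}=\lambda^{n-2}\langle\cdot,\cdot\rangle^0_a$, the exponent collapse to $(1-t)^1$, and the length scaling $L(\lambda\gamma)=\lambda^{n/2}L(\gamma)$) are all correct, and the finiteness of $L(\gamma)$ is indeed guaranteed by compactness of the straight-line image inside the open cone $\Matx$ and smoothness of $\langle\cdot,\cdot\rangle^0$ there.
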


By integrating this inequality, we get an estimate for $\TM$ that is
of exactly the same form as Proposition \ref{prop:1}.

\begin{proposition}\label{prop:4}
  Let $g_0, g_1 \in \Mf$, and let $A := \carr(g_1 - g_0)$.  Then there
  exists a constant $C'(n)$, depending only on $n = \dim M$, such that
  \begin{equation}\label{eq:3}
    \TM(g_0, g_1) \leq C'(n)
    \left(
      \Vol(A, g_0) + \Vol(A, g_1)
    \right).
  \end{equation}
\end{proposition}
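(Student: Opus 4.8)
The plan is to integrate the pointwise bound of Proposition \ref{prop:7} over $M$, using the integral representation $\TM(g_0, g_1) = \integral{M}{}{\tgx(g_0(x), g_1(x))}{d \mu_g}$, which is valid for $g_0, g_1 \in \Mf$ by Theorem \ref{thm:12}, and observing that the integrand vanishes on $M \setminus A$, where $A := \carr(g_1 - g_0)$, since $g_0(x) = g_1(x)$ there.

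First, though, I would extend Proposition \ref{prop:7} to $\cl(\Matx)$, exactly as Lemma \ref{lem:16} extends Lemma \ref{lem:1}: for $a_0, a_1 \in \cl(\Matx)$ one has $\tgx(a_0, a_1) \leq C'(n)(\sqrt{\det A_0} + \sqrt{\det A_1})$, where $\tgx$ on $\cl(\Matx)$ is understood via formula (\ref{eq:33}) and $C'(n)$ is the constant of Proposition \ref{prop:7}. When both tensors are positive definite this is Proposition \ref{prop:7} itself; when both lie in $\partial \Matx$ the left side is zero because $[a_0] = [a_1]$ in $\overline{\Matx}$; and when, say, $a_0 \in \partial \Matx$ and $a_1 \in \Matx$, I would pick a sequence $\{a_0^k\} \subset \Matx$ with $a_0^k \to a_0$ in $\satx$, note that $\det A_0^k \to \det A_0 = 0$ and $\tgx(a_0^k, a_1) \to \tgx(a_0, a_1)$ by Theorem \ref{thm:10}, and pass to the limit in the inequality of Proposition \ref{prop:7} applied to $a_0^k$ and $a_1$.

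With this in hand, the rest is a direct computation. On $A$ the integrand $\tgx(g_0(x), g_1(x))$ is bounded by $C'(n)(\sqrt{\det G_0(x)} + \sqrt{\det G_1(x)})$ by the extended pointwise estimate, and off $A$ it vanishes; since $\sqrt{\det G_\alpha(x)}\, d \mu_g = d \mu_{g_\alpha}$, integrating gives
\begin{equation*}
  \TM(g_0, g_1) = \integral{A}{}{\tgx(g_0(x), g_1(x))}{d \mu_g} \leq C'(n) \integral{A}{}{\left( \sqrt{\det G_0(x)} + \sqrt{\det G_1(x)} \right)}{d \mu_g} = C'(n) \left( \Vol(A, g_0) + \Vol(A, g_1) \right),
\end{equation*}
which is precisely (\ref{eq:3}).

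I do not expect a real obstacle here; the only point requiring mild care is the degenerate case of the pointwise estimate, and that is handled by the same limiting argument already used for Lemma \ref{lem:16}. (One should also note that the function $x \mapsto \tgx(g_0(x), g_1(x))$ is measurable, but this is already implicit in the definition of $\TM$ on $\Mf$ via Theorem \ref{thm:12}.)
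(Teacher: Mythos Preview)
Your proposal is correct and follows essentially the same approach as the paper's proof: both first extend the pointwise bound of Proposition~\ref{prop:7} to $\cl(\Matx)$ via the same limiting argument (mirroring Lemma~\ref{lem:16}), and then integrate over $M$, using that the integrand vanishes off $A$ and that $\sqrt{\det G_\alpha}\, d\mu_g = d\mu_{g_\alpha}$. Your write-up is in fact slightly more explicit than the paper's about restricting the integral to $A$ and about the source of the integral representation of $\TM$ on $\Mf$ (which one could equally cite from Proposition~\ref{prop:6}).
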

\begin{proof}
  We claim that the inequality of Proposition \ref{prop:7} still holds
  if $a_0, a_1 \in \cl(\Matx)$.  For if both lie in $\partial \Matx$,
  then by Theorem \ref{thm:10} we have that $\tgx(a_0, a_1) = 0$, so
  the statement is vacuous.  If one (say $a_0$) is in $\partial
  \Matx$, then we choose a sequence $a_0^k \xrightarrow{\tgx} a_0$,
  where all $a_0^k \in \Matx$.  By Lemma \ref{lem:16}, $\sqrt{\det
    A_0^k} \rightarrow \sqrt{\det A_0} = 0$.  On the other hand, for
  each $k \in \N$ we have
  \begin{equation*}
    \tgx(a_0^k, a_1) \leq C'(n)
    \left(
      \sqrt{\det A_0^k} + \sqrt{\det A_1}
    \right),
  \end{equation*}
  so taking the limit of the above inequality proves the claim.

  Finally, we note that since $\sqrt{\det G_0} = (\mu_{g_0} / \mu_g)$
  and $\sqrt{\det G_1} = (\mu_{g_1} / \mu_g)$, the inequality
  \eqref{eq:3} follows immediately from integrating the pointwise
  estimate of Proposition \ref{prop:7} after substituting $a_0 :=
  g_0(x)$ and $a_1 := g_1(x)$.
\end{proof}

We now have all of the background results that we need and are ready
to move into the main body of the paper.

\section{Convergence results}\label{sec:convergence-mf}

In this section, we will begin by proving the equivalence of the
topologies of $d$ and $\TM$ on a given quasi-amenable subset.  We will
then use this to extend the equivalence to all of $\Mf$.  Using that,
we will finally relatively quickly arrive at a homeomorphism between
$\overline{(\M, d)}$ and $\overline{(\M, \TM)}$.

\subsection{The topology on quasi-amenable
  subsets}\label{sec:topol-quasi-amen}

We begin this subsection with a straightforward extension of Theorem
\ref{thm:7} to degenerate metrics.

\begin{lemma}\label{lem:4}
  Let a quasi-amenable subset $\U \subset \M$ and $\epsilon > 0$ be
  given.  Then there exists $\delta > 0$ such that if $g_0, g_1 \in
  \U^0$ (the $L^2$-closure of $\U$) with $\norm{g_1 - g_0}_g <
  \delta$, we have $d(g_0, g_1) < \epsilon$.
\end{lemma}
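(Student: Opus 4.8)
The plan is to reduce Lemma~\ref{lem:4} to Theorem~\ref{thm:7} by approximating degenerate metrics in $\U^0$ by nondegenerate ones inside a slightly larger quasi-amenable subset, and then using the triangle inequality for $d$ together with Proposition~\ref{prop:1} (or Corollary~\ref{cor:2}) to control the error terms introduced by the approximation. The point is that $\U^0$, being the $L^2$-closure of a quasi-amenable set, consists of metrics whose coefficients are still uniformly bounded by the same constant $C$, but whose minimal eigenvalue may degenerate to $0$; so elements of $\U^0$ may fail to lie in any amenable subset, which is why Theorem~\ref{thm:7} does not apply directly.

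First I would write $\U = \{\tilde g \in \M \mid |\tilde g_{ij}| \le C\}$ (so $\U$ is quasi-amenable with constant $C$) and introduce, for each small $\eta > 0$, the amenable subset $\U_\eta = \{\tilde g \in \M \mid \lambda^{\tilde G}_{\min} \ge \eta \text{ and } |\tilde g_{ij}| \le C + 1\}$. Given $g_0, g_1 \in \U^0$, I would form the truncations $g_\alpha^\eta := \chi(E_\alpha^\eta) g_\alpha + \chi(M \setminus E_\alpha^\eta) g$ for $\alpha = 0,1$, where $E_\alpha^\eta := \{x \in M \mid \lambda^{G_\alpha}_{\min}(x) \ge \eta\}$ — exactly the device used in the proof of Proposition~\ref{prop:1} — choosing $\eta$ small enough and rescaling the reference metric if necessary so that $g_\alpha^\eta$ lies in the closure of the amenable subset $\U_\eta$. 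Then $g_\alpha^\eta$ differs from $g_\alpha$ only on $M \setminus E_\alpha^\eta$, a set on which $g_\alpha$ has small volume once $\eta$ is small (since $\Vol(M \setminus E_\alpha^\eta, g_\alpha) \to \Vol(X_{g_\alpha}, g_\alpha) = 0$ as $\eta \to 0$), so Proposition~\ref{prop:1} gives $d(g_\alpha, g_\alpha^\eta) \le C(n)(\sqrt{\Vol(M\setminus E_\alpha^\eta, g_\alpha)} + \sqrt{\Vol(M \setminus E_\alpha^\eta, g)}) < \epsilon/4$ for $\eta$ small enough. Moreover $\Vol(M \setminus E_\alpha^\eta, g) \to 0$ as $\eta \to 0$ by dominated convergence, so both error terms are controlled uniformly in a way that depends only on $\epsilon$, $C$, $n$, and $g$.

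Next, with $\eta$ now fixed, I would apply Theorem~\ref{thm:7} to the quasi-amenable (in fact amenable) subset $\U_\eta$: there is $\delta_0 > 0$ such that $g_0^\eta, g_1^\eta \in \cl(\U_\eta)$ with $\|g_0^\eta - g_1^\eta\|_g < \delta_0$ implies $d(g_0^\eta, g_1^\eta) < \epsilon/2$. The remaining task is to bound $\|g_0^\eta - g_1^\eta\|_g$ in terms of $\|g_0 - g_1\|_g$. Since $g_\alpha^\eta$ agrees with $g_\alpha$ on $E_\alpha^\eta$ and with $g$ off it, one checks that $g_0^\eta - g_1^\eta$ equals $g_0 - g_1$ on $E_0^\eta \cap E_1^\eta$ and is supported on a set where the integrand is bounded by $|g_0 - g| + |g_1 - g|$ pointwise; a short computation using the uniform coefficient bounds shows $\|g_0^\eta - g_1^\eta\|_g^2 \le \|g_0 - g_1\|_g^2 + K(\Vol(M \setminus E_0^\eta, g) + \Vol(M \setminus E_1^\eta, g))$ for a constant $K$ depending only on $C$, $n$, $g$ — and the second term is already small by our choice of $\eta$. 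Hence, shrinking $\eta$ further if needed and then choosing $\delta$ small, we get $\|g_0^\eta - g_1^\eta\|_g < \delta_0$ whenever $\|g_0 - g_1\|_g < \delta$, and combining with the triangle inequality $d(g_0, g_1) \le d(g_0, g_0^\eta) + d(g_0^\eta, g_1^\eta) + d(g_1^\eta, g_1) < \epsilon/4 + \epsilon/2 + \epsilon/4 = \epsilon$ finishes the proof.

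The main obstacle is handling the interplay between the two approximation parameters: $\eta$ must be chosen first, small enough that the volume error terms (which depend on $g_0, g_1$ only through the unknown deflated sets $X_{g_\alpha}$) are uniformly small, but the subsequent $L^2$-bound $\delta_0$ from Theorem~\ref{thm:7} depends on the fixed amenable subset $\U_\eta$ and hence on $\eta$. One has to verify that $\eta$ can be chosen in a way that works simultaneously for \emph{all} $g_0, g_1 \in \U^0$ — this uses that the convergence $\Vol(M \setminus E_\alpha^\eta, g) \to 0$ and the bound $\Vol(M \setminus E_\alpha^\eta, g_\alpha) \le \Vol(X_{g_\alpha}^c \cap (M \setminus E_\alpha^\eta), g_\alpha)$ can be made uniform over metrics with coefficients bounded by $C$, together with Lemma~\ref{lem:3} relating volume differences to $d$. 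Getting this uniformity cleanly, rather than a bound depending on the individual metrics, is the technical heart of the argument.
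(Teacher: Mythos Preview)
Your approach has a genuine gap, and the paper's proof is considerably simpler.

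The paper's argument is a direct limiting procedure. Given $\epsilon>0$, take $\delta$ from Theorem~\ref{thm:7} so that $\tilde g_0,\tilde g_1\in\U$ with $\|\tilde g_1-\tilde g_0\|_g<2\delta$ implies $d(\tilde g_0,\tilde g_1)<\epsilon$. For $g_0,g_1\in\U^0$ with $\|g_1-g_0\|_g<\delta$, pick sequences $\{g_0^k\},\{g_1^k\}\subset\U$ that $L^2$-converge to $g_0,g_1$; by Corollary~\ref{cor:2} these sequences also $d$-converge. Then $\|g_1^k-g_0^k\|_g\to\|g_1-g_0\|_g<\delta$, so $\|g_1^k-g_0^k\|_g<2\delta$ for large $k$, whence $d(g_0^k,g_1^k)<\epsilon$, and passing to the limit gives $d(g_0,g_1)\le\epsilon$. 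No truncation, no amenable subsets, no uniformity issue.

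Your truncation argument breaks at exactly the point you flag as the ``technical heart''. You need $\Vol(M\setminus E_\alpha^\eta,g)\to 0$ as $\eta\to 0$ \emph{uniformly} over $g_\alpha\in\U^0$, and this is simply false. For any fixed $\eta>0$, take $g_\alpha\in\U$ with $\lambda^{G_\alpha}_{\min}(x)\in(\eta/2,\eta)$ for every $x\in M$; then $E_\alpha^\eta=\emptyset$ and $\Vol(M\setminus E_\alpha^\eta,g)=\Vol(M,g)$. So $\eta$ cannot be chosen independently of $g_0,g_1$, and once $\eta$ depends on them, so does the $\delta_0$ you extract from Theorem~\ref{thm:7} for $\U_\eta$, destroying the uniformity of $\delta$. (The companion term $\Vol(M\setminus E_\alpha^\eta,g_\alpha)$ \emph{is} uniformly small, since $\sqrt{\det G_\alpha}$ is bounded by a constant times $\sqrt{\eta}$ there; the problem is entirely with the reference-metric volume appearing in Proposition~\ref{prop:1} because you replaced $g_\alpha$ by $g$ on the bad set.) The paper avoids all of this by leaning on Corollary~\ref{cor:2}, which already packages the passage from $\U$ to $\U^0$.
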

\begin{proof}
  Let $\delta > 0$ be the number, guaranteed by Theorem \ref{thm:7},
  for which the following holds: For all $\tilde{g}_0, \tilde{g}_1 \in
  \U$ with $\norm{\tilde{g}_1 - \tilde{g}_0}_g < 2 \delta$, we have
  $d(\tilde{g}_0, \tilde{g}_1) < \epsilon$.

  We claim that this is the desired number $\delta$.  To see this, let
  $g_0, g_1 \in \U^0$ with $\norm{g_1 - g_0}_g < \delta$, and choose
  sequences $\{g_0^k\}$ and $\{g_1^k\}$ in $\U$ that both $L^2$- and
  $d$-converge to $g_0$ and $g_1$, respectively.  (The existence of
  such sequences is assured by Proposition \ref{prop:5}.)  Then by
  definition,
  \begin{equation}\label{eq:12}
    d(g_0, g_1) = \lim_{k \rightarrow \infty} d(g_0^k, g_1^k).
  \end{equation}
  On the other hand, since $g_i^k \xrightarrow{L^2} g_i$ for $i = 0,
  1$, we have
  \begin{equation*}
    \lim_{k \rightarrow \infty} \norm{g_1^k - g_0^k}_g = \norm{g_1 - g_0}_g.
  \end{equation*}
  Since $\norm{g_1 - g_0}_g < \delta$, this implies that $\norm{g_1^k
    - g_0^k}_g < 2 \delta$ for $k$ large enough, and so by the
  assumption on $\delta$, $d(g_0^k, g_1^k) < \epsilon$ for $k$ large.
  But then \eqref{eq:12} implies that $d(g_0, g_1) < \epsilon$, as was
  to be proved.
\end{proof}

Next, we need a lemma that allows us to compare open balls in the
metric $\tgx$ to open balls in the norm $\absgx{\, \cdot \,}$.  It is
possible to do this uniformly if we restrict to compact subsets in
$\Matx$; in particular, we will need those subsets given in the next
lemma.

\begin{lemma}\label{lem:7}
  Let any numbers $\zeta, \tau > 0$ be given.  For each $x \in M$, we
  define
  \begin{equation*}
    \Matx^{\zeta,\tau} := \{ a \in \Matx \mid \sqrt{\det A}
    \geq \zeta,\ \abs{a_{ij}} \leq \tau\ \textnormal{for all}\ 1 \leq
    i, j \leq n \} \subset \Matx.
  \end{equation*}
  Furthermore, for any $\lambda \geq 0$ and $a \in \Matx$, denote by
  $B^{\theta^g_x}_{a}(\lambda)$ and $B^{\absgx{\, \cdot
      \,}}_{a}(\lambda)$ the open balls of radius $\lambda$ around $a$
  with respect to $\tgx$ and $\absgx{\, \cdot \,}$, respectively.

  For each $x \in M$, $a \in \Matx$, and $\kappa > 0$, we also define
  a function
  \begin{equation*}
    \eta_{x,a}(\kappa) := \sup \left\{ \lambda \in \R \midmid
      B^{\theta^g_x}_{a}(\lambda) \subset B^{\absgx{\, \cdot \,}}_{a}(\kappa) \right\}.
  \end{equation*}

  Then $\eta_{x,a}$ takes values in $(0, \infty)$, as does the
  function
  \begin{equation}\label{eq:27}
    \eta(\kappa) := \inf_{x \in M,\ a \in \Matx^{\zeta,\tau}}
    \eta_{x,a}(\kappa).
  \end{equation}
\end{lemma}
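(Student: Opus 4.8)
The plan is to prove the two assertions in order: first that each $\eta_{x,a}$ is strictly positive and finite, then that the infimum $\eta(\kappa)$ defined in \eqref{eq:27} remains strictly positive (finiteness of $\eta$ is immediate once each $\eta_{x,a}$ is finite, since $\eta(\kappa) \leq \eta_{x,a}(\kappa) < \infty$ for any fixed choice). For fixed $x$, $a$, and $\kappa$, positivity of $\eta_{x,a}(\kappa)$ follows from the fact that both $\tgx$ and $\absgx{\, \cdot \,}$ induce the standard topology on the finite-dimensional manifold $\Matx$ (the former because it is the distance function of a smooth Riemannian metric, the latter because it is a norm on the vector space $\satx$ restricted to the open set $\Matx$). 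Hence $B^{\absgx{\, \cdot \,}}_a(\kappa)$ is a $\tgx$-open neighborhood of $a$, so it contains some $\tgx$-ball $B^{\tgx}_a(\lambda)$ with $\lambda > 0$; thus the supremum defining $\eta_{x,a}(\kappa)$ is over a nonempty set bounded below by a positive number, giving $\eta_{x,a}(\kappa) > 0$. Finiteness is equally easy: since the two topologies agree, $B^{\tgx}_a(\lambda)$ is not contained in $B^{\absgx{\, \cdot \,}}_a(\kappa)$ once $\lambda$ is large enough that $B^{\tgx}_a(\lambda)$ escapes the bounded set $B^{\absgx{\, \cdot \,}}_a(\kappa)$—indeed, $\Matx$ with $\tgx$ is not bounded (it is an incomplete but unbounded metric space, cf.\ Theorem \ref{thm:10}), so arbitrarily large $\tgx$-balls around $a$ exist and cannot sit inside a fixed norm ball.

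The substantive part is showing $\eta(\kappa) = \inf_{x \in M,\ a \in \Matx^{\zeta,\tau}} \eta_{x,a}(\kappa) > 0$. The natural approach is a compactness argument. First I would observe that the set
\begin{equation*}
  \mathcal{K} := \{ (x, a) \mid x \in M,\ a \in \Matx^{\zeta,\tau} \}
\end{equation*}
is a compact subset of the total space $\bigcup_{x \in M} \satx = S^2 T^* M$: the base $M$ is compact, and in each chart of our fixed amenable atlas the fiber condition $\abs{a_{ij}} \leq \tau$ together with $\sqrt{\det A} \geq \zeta$ cuts out a closed and bounded—hence compact—subset of the fiber (the determinant lower bound prevents degeneration, keeping us inside $\Matx$, and is a closed condition). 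Then I would argue that the function $(x, a) \mapsto \eta_{x,a}(\kappa)$ is lower semicontinuous on $\mathcal{K}$—or at least bounded below by a positive lower-semicontinuous function—whence it attains a positive minimum on the compact set $\mathcal{K}$, and that minimum is $\eta(\kappa)$.

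The hard part will be establishing the requisite semicontinuity (or a direct uniform estimate) for $\eta_{x,a}$ as $(x,a)$ ranges over $\mathcal{K}$. The cleanest route is probably to bypass $\eta_{x,a}$ itself and instead directly compare the two metrics uniformly on $\mathcal{K}$: find constants $0 < c \leq C$ (depending on $\zeta$, $\tau$, $n$, and the fixed atlas) such that $c\,\absgx{b}^2 \leq \langle b, b\rangle^0_a \leq C\,\absgx{b}^2$ for all $(x,a) \in \mathcal{K}$ and all $b \in \satx$. The upper bound is routine since $\langle b,b\rangle^0_a = \tr_a(b^2)\det A$ and both $\tr_a(b^2)$ (bounded in terms of $\absgx{b}^2$ once the eigenvalues of $A$ are bounded away from $0$, i.e.\ once $\det A$ and the coefficients of $a$ are controlled) and $\det A$ are bounded on $\mathcal{K}$; the lower bound is where the constraints $\sqrt{\det A} \geq \zeta$ and $\abs{a_{ij}} \leq \tau$ earn their keep, since together they force the eigenvalues of $A$ into a compact subinterval of $(0,\infty)$, making the quadratic form $\langle\cdot,\cdot\rangle^0_a$ uniformly comparable to $\absgx{\cdot}^2$. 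Having such a comparison on the convex (or at least path-connected with controlled geometry) region $\Matx^{\zeta,\tau}$, one compares path lengths and then distances, obtaining a uniform modulus relating $\tgx$-balls to $\absgx{\cdot}$-balls, which yields $\eta(\kappa) \geq c'\kappa$ for some $c' > 0$. Care must be taken that the comparison of \emph{distances} (not just infinitesimal norms) works near the boundary of $\Matx^{\zeta,\tau}$—a short-path-versus-long-path argument, noting that any $\tgx$-minimizing or near-minimizing path between two nearby points of $\Matx^{\zeta,\tau}$ stays in a slightly enlarged region on which the same bounds hold—but this is a standard maneuver.
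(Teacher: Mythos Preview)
Your proposal is correct, and at the level of strategy it matches the paper: both arguments hinge on the compactness of $M$ and of the fiberwise sets $\Matx^{\zeta,\tau}$. The execution differs, however. The paper's proof is much terser: after noting (as you do) that the two topologies on $\Matx$ agree, it simply observes that $\langle\cdot,\cdot\rangle^0_a$ and $\langle\cdot,\cdot\rangle_{g(x)}$ depend smoothly on $(x,a)$, asserts that therefore $\eta_{x,a}$ is continuous separately in $x$ and $a$, and concludes by compactness of $M$ and $\Matx^{\zeta,\tau}$. Your route is more explicit---you propose to bypass continuity of $\eta_{x,a}$ altogether and instead establish a uniform two-sided comparison $c\,\absgx{b}^2 \leq \langle b,b\rangle^0_a \leq C\,\absgx{b}^2$ on $\mathcal{K}$, then upgrade this infinitesimal comparison to a comparison of distances. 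This buys you something concrete (an explicit linear lower bound $\eta(\kappa)\geq c'\kappa$) but costs the path-confinement argument you flag at the end; that step is indeed doable (e.g., Lemma~\ref{lem:1} controls $\sqrt{\det A}$ along short $\tgx$-paths, keeping them in a slightly relaxed compact region), though it is more than a one-liner. The paper's version is quicker but correspondingly sketchier---in particular, the passage from smooth dependence of the Riemannian metrics to continuity of the sup-defined quantity $\eta_{x,a}$, and from \emph{separate} continuity to a positive infimum over a product of compacts, is left entirely to the reader.
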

\begin{proof}
  Since $\Matx$ is a finite-dimensional manifold, the topologies
  induced by $\absgx{\, \cdot \,}$ and $\theta^g_x$ coincide.  This
  implies, in particular, that for all $\kappa > 0$ and $a \in \Matx$,
  we can find $\lambda > 0$ such that
  \begin{equation*}
    B^{\theta^g_x}_{a}(\lambda) \subset B^{\absgx{\, \cdot \,}}_{a}(\kappa).
  \end{equation*}
  This also implies that the supremum of such $\lambda$ must be
  finite, proving that $\eta_{x,a}$ takes values in $(0, \infty)$.
  
  To see that $\eta$ is also a positive, finite function, we note that
  $\langle \cdot, \cdot \rangle^0_a$ and $\langle \cdot , \cdot
  \rangle_{g(x)}$ depend smoothly on $x$ and $a$.  Thus, $\eta_{x,a}$
  is continuous separately in $x$ and $a$.  Therefore the result
  follows from the compactness of $\Matx^{\zeta, \tau}$ and $M$.
\end{proof}

Using this lemma, we can get a bound on the distance between two
elements of a quasi-amenable subset if we have a uniform, pointwise
bound on their distance in $\tgx$.  This, of course, will not help us
much when trying to prove $d$-convergence of a sequence $\{g_k\}$ that
$\TM$-converges to $g_0$, as such a sequence only has $\tgx(g_k(x),
g_0(x))$ converging to zero in $L^1(M, g)$.  However, it will be
sufficiently strong to facilitate a cut-off argument.  These arguments
will be a recurring theme in the remaining proofs.

\begin{lemma}\label{lem:5}
  Let $\U \subset \M$ be any quasi-amenable subset, and let $\epsilon
  > 0$ be given.  Then there exists $\delta > 0$ such that if $g_0 \in
  \U^0$, $g_1 \in \Mf$, and $\theta^g_x(g_0(x), g_1(x)) < \delta$ for
  all $x \in M$, we have $d(g_0, g_1) < \epsilon$.
\end{lemma}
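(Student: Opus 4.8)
Throughout, $\theta^g_x$ and the norm $\absgx{\,\cdot\,}$ are understood as in Convention~\ref{cvt:2}. The plan is to combine a single cut-off of $M$ with the two complementary distance estimates already at hand: Lemma~\ref{lem:4} (a small $L^2$-distance forces a small $d$-distance, inside a quasi-amenable set) on the region where $g_0$ is uniformly bounded away from degeneracy, and Proposition~\ref{prop:1} (the $d$-distance is controlled by the volume of $\carr(g_1-g_0)$) on the complementary, nearly-degenerate region. The conceptual obstruction is that a uniform \emph{pointwise} bound $\theta^g_x(g_0(x),g_1(x))<\delta$ need not bound $\norm{g_1-g_0}_g$ at all: as $\theta^g_x$ collapses near $\partial\Matx$, the tensor $g_1(x)$ can be $\absgx{\,\cdot\,}$-enormous while $\theta^g_x(g_0(x),g_1(x))$ stays tiny. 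What rescues the scheme is Lemma~\ref{lem:16}: wherever $g_1(x)$ is $\absgx{\,\cdot\,}$-far from $g_0(x)$, both tensors have small determinant, hence small volume density, so Proposition~\ref{prop:1} makes modifying $g_1$ on that set cheap.

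First I would fix constants in an acyclic order. Let $C$ be the coefficient bound of $\U$, put $\tau:=2C+1$ and $\U_\tau:=\{\tilde g\in\M\mid\abs{\tilde g_{ij}}\le\tau\}\supseteq\U$, and let $\delta_1>0$ be the number Lemma~\ref{lem:4} provides for the quasi-amenable set $\U_\tau$ and the error $\epsilon/2$. Since $M$ is compact and $g$ smooth, there is a constant $c_g$ with $\abs{h_{ij}(x)}\le c_g\absgx{h}$ for all symmetric $h$ and all $x$, and $\Vol(M,g)<\infty$. Set $\kappa_0:=\min\{(C+1)/c_g,\ \delta_1/(2\sqrt{\Vol(M,g)})\}$, pick $\zeta_0>0$ with $C(n)(1+\sqrt2)\sqrt{\zeta_0\,\Vol(M,g)}<\epsilon/2$, let $\eta(\kappa_0)>0$ be the constant from Lemma~\ref{lem:7} (see \eqref{eq:27}) for the region $\Matx^{\zeta_0,C}$ with target $\kappa_0$, and put $\delta:=\min\{\eta(\kappa_0),\ \zeta_0/\sqrt n\}$. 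Each quantity depends only on those chosen before it, so this chain closes.

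Now take $g_0\in\U^0$ and $g_1\in\Mf$ with $\theta^g_x(g_0(x),g_1(x))<\delta$ a.e., and set $P:=\{x\in M\mid\det G_0(x)\ge\zeta_0^2\}$ and $\hat g_1:=\chi(P)g_1+\chi(M\setminus P)g_0\in\Mf$. On $P$, $g_0(x)$ is positive definite with coefficients $\le C$ and $\sqrt{\det G_0(x)}\ge\zeta_0$, that is, $g_0(x)\in\Matx^{\zeta_0,C}$; by Lemma~\ref{lem:16} together with $\tfrac{\sqrt n}{2}\delta\le\zeta_0/2$ one gets $\sqrt{\det G_1(x)}\ge\zeta_0/2>0$, so $g_1(x)\in\Matx$ as well. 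Hence Lemma~\ref{lem:7} yields $\absgx{g_0(x)-g_1(x)}<\kappa_0$ on $P$, so $\abs{(g_1)_{ij}(x)}\le C+c_g\kappa_0\le\tau$ there, and since $\hat g_1=g_0$ off $P$ one checks, just as for the cut-off metrics in the proof of Proposition~\ref{prop:1}, that $\hat g_1\in\U_\tau^0$. Moreover $\norm{\hat g_1-g_0}_g\le\kappa_0\sqrt{\Vol(M,g)}\le\delta_1/2<\delta_1$, so Lemma~\ref{lem:4} gives $d(g_0,\hat g_1)<\epsilon/2$.

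On $M\setminus P$ we have $\sqrt{\det G_0}<\zeta_0$ and, by Lemma~\ref{lem:16} again, $\sqrt{\det G_1}<\tfrac32\zeta_0<2\zeta_0$, hence $\Vol(M\setminus P,g_0)<\zeta_0\Vol(M,g)$ and $\Vol(M\setminus P,g_1)<2\zeta_0\Vol(M,g)$. Since $\hat g_1$ and $g_1$ differ only on $M\setminus P$, Proposition~\ref{prop:1} gives $d(\hat g_1,g_1)\le C(n)\bigl(\sqrt{\Vol(M\setminus P,g_0)}+\sqrt{\Vol(M\setminus P,g_1)}\bigr)<C(n)(1+\sqrt2)\sqrt{\zeta_0\Vol(M,g)}<\epsilon/2$, and the triangle inequality yields $d(g_0,g_1)<\epsilon$. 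The step I expect to demand the most care is the cut-off itself: choosing $P$ so that the uniform finite-dimensional ball comparison of Lemma~\ref{lem:7} genuinely applies on it, verifying that $\hat g_1$ still lands in $\U_\tau^0$, and checking that the determinant estimate of Lemma~\ref{lem:16} is strong enough to make both the $L^2$ error on $P$ and the volume error off $P$ as small as needed — all while keeping the dependence among the constants acyclic.
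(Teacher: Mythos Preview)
Your proof is correct and follows essentially the same route as the paper: split $M$ according to whether $g_0$ is uniformly away from degeneracy, use Lemma~\ref{lem:7} together with Lemma~\ref{lem:4} on the good set, and Proposition~\ref{prop:1} on the bad set. The only real difference is cosmetic: the paper cuts both $g_0$ and $g_1$ to zero on the bad set (yielding a three-term triangle inequality and two applications of Proposition~\ref{prop:1}), whereas you replace $g_1$ by $g_0$ there (a single intermediate $\hat g_1$ and one application of Proposition~\ref{prop:1}), and you control $\Vol(M\setminus P,g_1)$ pointwise via Lemma~\ref{lem:16} rather than globally via Lemma~\ref{lem:6}; your version is marginally more economical but otherwise identical in substance.
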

\begin{proof}
  Let $\epsilon > 0$ be given.  The idea is to cut off $g_0$ and $g_1$
  by setting them equal to zero on a small subset such that the
  cut-off semimetrics belong to a common quasi-amenable subset.  The
  distance from the cut-off semimetrics to the original semimetrics
  will then be small by Proposition \ref{prop:1}.  The distance in the
  $L^2$ norm between the two cut-off semimetrics can be estimated
  using Lemma \ref{lem:7}, which gives us a bound on the $d$-distance
  by Lemma \ref{lem:4}.
  
  To fill in the details of this, we first define a positive constant
  $\zeta$ by
  \begin{equation}\label{eq:13}
    \zeta := \frac{\epsilon^2}{32 C(n)^2 \Vol(M, g)}
  \end{equation}
  where $C(n)$ is the constant from Proposition \ref{prop:1}.

  Let $\tau$ be the number such that the quasi-amenable subset $\U$ is
  given by
  \begin{equation*}
    \{ g \in \M \mid \abs{g_{ij}(x)} \leq \tau\ \textnormal{for all}\ 1 \leq
    i, j \leq n,\ x \in M \}.
  \end{equation*}
  As in the last lemma, for each $x \in M$, we then consider the set
  \begin{equation*}
    \Matx^{\zeta, \tau} := \{ a \in \Matx \mid \sqrt{\det A}
    \geq \zeta,\ \abs{a_{ij}} \leq \tau\ \textnormal{for all}\ 1 \leq
    i, j \leq n \} \subset \Matx,
  \end{equation*}
  We also define the positive function $\eta$ as in the last lemma.

  Next, we denote by $\tilde{\U}$ the ``double'' of $\U$, i.e.,
  \begin{equation*}
    \tilde{\U} := \{ g \in \M \mid \abs{g_{ij}(x)} \leq 2\tau\ \textnormal{for all}\ 1 \leq
    i, j \leq n,\ x \in M \}.
  \end{equation*}
  Then it is clear that there exists $\alpha > 0$ such that if
  $\tilde{g}_0 \in \U^0$, $\tilde{g}_1 \in \Mf$, and
  \begin{equation*}
    \absgx{\tilde{g}_1(x) - \tilde{g}_0(x)} < \alpha \quad \textnormal{for all}\ x \in
    M,
  \end{equation*}
  then $\tilde{g}_1 \in \tilde{\U}^0$.
  
  Let $\kappa$ now be the constant from Lemma \ref{lem:4} such that
  $\tilde{g}_0, \tilde{g}_1 \in \tilde{\U}^0$ and $\norm{\tilde{g}_1 -
    \tilde{g}_0}_g < \kappa$ implies $d(\tilde{g}_0, \tilde{g}_1) <
  \epsilon/2$.  Then we claim that
  \begin{equation}\label{eq:11}
    \delta = \min \left\{ \eta(\alpha), \eta \left( \frac{\kappa}{\sqrt{\Vol(M, g)}}
      \right),  \frac{\epsilon^2}{16 \sqrt{n} C(n)^2 \Vol(M, g)}
    \right\}
  \end{equation}
  is the desired constant.

  To see this, let $g_0, g_1 \in \U^0$ with $\theta^g_x(g_0(x),
  g_1(x)) < \delta$ be given.  Furthermore, let
  \begin{equation*}
    E^\zeta :=
    \left\{
      x \in M \mid g_0(x) \in \Matx^{\zeta,\tau}
    \right\}.
  \end{equation*}
  We then define
  \begin{equation*}
    g_0^\zeta := \chi(E^\zeta) g_0
    \quad \textnormal{and} \quad
    g_1^\zeta := \chi(E^\zeta) g_1.
  \end{equation*}

  Since $(\mu_{g_0} / \mu_g) < \zeta$ on $M \setminus E^\zeta$, we
  have that
  \begin{equation*}
    \Vol(M \setminus E^\zeta, g_0) = \integral{E^\zeta}{}{
      \left(
        \frac{\mu_{g_0}}{\mu_g}
      \right)}{d \mu_g} < \zeta \integral{E^\zeta}{}{}{d \mu_g} \leq
    \zeta \cdot \Vol(M, g).
  \end{equation*}

  Additionally, since $\theta^g_x(g_0(x), g_1(x)) < \delta$ for all $x
  \in M$, $\Theta_M(g_0, g_1) < \delta \cdot \Vol(M, g)$.  Therefore,
  by Lemma \ref{lem:6},
  \begin{equation*}
    \Vol(M \setminus E^\zeta, g_1) \leq \Vol(M \setminus E^\zeta, g_0) + \frac{\sqrt{n}}{2}
    \Theta_M(g_0, g_1) < \zeta \cdot \Vol(M, g) + \frac{\sqrt{n}}{2} \delta \cdot \Vol(M, g).
  \end{equation*}
  
  By Proposition \ref{prop:1}, and using that $g_0$ and $g_0^\zeta$
  differ only on $M \setminus E^\zeta$, where $g_0^\zeta = 0$, we thus
  see that
  \begin{equation*}
    d(g_0, g_0^\zeta) \leq C(n) \sqrt{\Vol(M \setminus
      E^\zeta, g_0)} < C(n) \sqrt{\zeta \cdot \Vol(M, g)}.
  \end{equation*}
  Similarly,
  \begin{equation*}
    d(g_1, g_1^\zeta) \leq C(n) \sqrt{\Vol(M \setminus E^\zeta, g_1)} < C(n) \sqrt{\zeta \cdot
      \Vol(M, g) + \frac{\sqrt{n}}{2} \delta \cdot \Vol(M, g)}.
  \end{equation*}
  Inserting \eqref{eq:13} and \eqref{eq:11} (for $\delta$, use the
  last of the values in the minimum) into the above estimate and
  simplifying gives
  \begin{equation}\label{eq:14}
    d(g_0, g_0^\zeta) + d(g_1, g_1^\zeta) < \epsilon/2.
  \end{equation}

  We now wish to estimate $d(g_0^\zeta, g_1^\zeta)$, so that we can
  use \eqref{eq:14} and the triangle inequality to estimate $d(g_0,
  g_1)$.  To do so, note that $g_0^\zeta$ and $g_1^\zeta$ differ only
  on $E^\zeta$, where $g_0(x) \in \Matx^{\zeta,\tau}$.  But we also
  know that
  \begin{equation*}
    \theta^g_x(g_0^\zeta(x), g_1^\zeta(x)) < \delta \leq \eta(\alpha),
  \end{equation*}
  meaning
  \begin{equation*}
    \absgx{g_1^\zeta(x) - g_0^\zeta(x)} < \alpha.
  \end{equation*}
  By our choice of $\alpha$, this implies that $g_1^\zeta \in
  \tilde{\U}^0$.

  Finally, we recall that
  \begin{equation*}
    \theta^g_x(g_0^\zeta(x), g_1^\zeta(x)) < \delta \leq \eta \left(
      \frac{\kappa}{\sqrt{\Vol(M, g)}} \right)
  \end{equation*}
  for all $x \in M$.  By the definition of $\eta$, this immediately
  implies that
  \begin{equation*}
    \absgx{g_1^\zeta(x) - g_0^\zeta(x)} < \frac{\kappa}{\sqrt{\Vol(M,g)}}.
  \end{equation*}
  Therefore,
  \begin{equation*}
    \norm{g_1^\zeta - g_0^\zeta}_g  =
    \left(
      \integral{M}{}{\absgx{g_1^\zeta(x) - g_0^\zeta(x)}^2}{d \mu_g}
    \right)^{1/2} < \left(
      \frac{\kappa^2}{\Vol(M, g)} \integral{M}{}{}{d \mu_g}
    \right)^{1/2} = \kappa.
  \end{equation*}
  But by our choice of $\kappa$, and since as we noted, $g_0^\zeta,
  g_1^\zeta \in \tilde{\U}^0$, the above inequality implies that
  $d(g_0^\zeta, g_1^\zeta) < \epsilon / 2$.  This, combined with
  \eqref{eq:14} and the triangle inequality, gives the desired result.
\end{proof}

With this long estimate out of the way, we can show the equivalence of
the topologies of $d$ and $\TM$ on a quasi-amenable subset.  In fact,
the following proposition is even more general, as it only requires
that the limit semimetric be bounded---i.e., lie in some
quasi-amenable subset.  The sequence converging to this limit can have
elements anywhere in $\Mf$.

\begin{proposition}\label{prop:3}
  Let $g_0 \in \Mf$ be a bounded semimetric, and let $\{g_k\} \subset
  \Mf$ be any sequence.  Then $g_k \xrightarrow{d} g_0$ if and only if
  $g_k \xrightarrow{\Theta_M} g_0$.
\end{proposition}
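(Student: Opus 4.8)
The plan is to prove the two implications separately; the forward one is essentially immediate, while the converse rests on a cut-off argument built around Lemma \ref{lem:5}. For $g_k \xrightarrow{d} g_0 \Rightarrow g_k \xrightarrow{\Theta_M} g_0$, I would just invoke the extension of \eqref{eq:32} to the completion provided by Proposition \ref{prop:6}: viewing $g_0$ and the $g_k$ as elements of $\overline{(\M, d)} \cong \Mfhat$, one has
\begin{equation*}
  \Theta_M(g_k, g_0) \leq d(g_k, g_0)\left( \sqrt{n}\, d(g_k, g_0) + 2\sqrt{\Vol(M, g_k)} \right),
\end{equation*}
and since $d(g_k, g_0) \to 0$ also forces $\Vol(M, g_k) \to \Vol(M, g_0)$ by Lemma \ref{lem:3}, the right-hand side tends to $0$. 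Boundedness of $g_0$ plays no role here.

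For the converse, suppose $g_k \xrightarrow{\Theta_M} g_0$ and fix $\epsilon > 0$; the goal is $d(g_k, g_0) < \epsilon$ for $k$ large. As $g_0$ is bounded it lies in $\U^0$ for some quasi-amenable $\U$, and its coefficient bound together with $\det g$ being bounded below on the compact manifold $M$ yields a constant $D$ with $\sqrt{\det G_0} \leq D$ everywhere. First I would apply Lemma \ref{lem:5} to $\U$ and $\epsilon/2$ to obtain a threshold $\delta > 0$, and then cut off against it: set
\begin{equation*}
  G_k := \{ x \in M \mid \theta^g_x(g_0(x), g_k(x)) < \delta \}, \qquad g_0^k := \chi(G_k)\, g_0, \qquad g_k' := \chi(G_k)\, g_k .
\end{equation*}
Then $g_0^k \in \U^0$ (it obeys the same coefficient bound as $g_0$), $g_k' \in \Mf$, and $\theta^g_x(g_0^k(x), g_k'(x)) < \delta$ for every $x$ (it equals $\theta^g_x(g_0(x), g_k(x))$ on $G_k$ and vanishes off $G_k$), so Lemma \ref{lem:5} gives $d(g_0^k, g_k') < \epsilon/2$, uniformly in $k$. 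By the triangle inequality it then remains only to show that the two cut-off errors $d(g_0, g_0^k)$ and $d(g_k, g_k')$ both tend to $0$.

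Both of these I would handle via Proposition \ref{prop:1} together with a Markov-type bound on $\mu_g(M \setminus G_k)$, using that $M \setminus G_k = \{ x \mid \theta^g_x(g_0(x), g_k(x)) \geq \delta \}$, whence $\mu_g(M \setminus G_k) \leq \delta^{-1}\Theta_M(g_0, g_k) \to 0$. Since $g_0$ and $g_0^k$ differ only on $M \setminus G_k$, where $g_0^k$ vanishes, Proposition \ref{prop:1} gives $d(g_0, g_0^k) \leq C(n)\sqrt{\Vol(M \setminus G_k, g_0)} \leq C(n)\sqrt{D\,\mu_g(M \setminus G_k)} \to 0$. For the $g_k$-error, likewise $d(g_k, g_k') \leq C(n)\sqrt{\Vol(M \setminus G_k, g_k)}$, but now $g_k$ may be arbitrarily large on $M \setminus G_k$; so I would bound its intrinsic volume there pointwise via Lemma \ref{lem:16}, writing $\sqrt{\det G_k(x)} \leq \sqrt{\det G_0(x)} + \tfrac{\sqrt n}{2}\,\theta^g_x(g_0(x), g_k(x))$, and integrating over $M \setminus G_k$ to get
\begin{equation*}
  \Vol(M \setminus G_k, g_k) \leq D\,\mu_g(M \setminus G_k) + \frac{\sqrt n}{2}\,\Theta_M(g_0, g_k) \leq \left( \frac{D}{\delta} + \frac{\sqrt n}{2} \right)\Theta_M(g_0, g_k) \longrightarrow 0 .
\end{equation*}
Thus both errors vanish in the limit, and for $k$ large the triangle inequality yields $d(g_k, g_0) < \epsilon$.

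I expect this last estimate on $d(g_k, g_k')$ to be the main obstacle: since $\{g_k\}$ is not assumed bounded, the fact that discarding the part of $g_k$ outside $G_k$ costs little $d$-distance relies entirely on the feature of Proposition \ref{prop:1} (ultimately Theorem \ref{thm:8}) that $d$ is controlled by the \emph{intrinsic} volume of the carrier no matter how large the metrics are on it, and one must still show that intrinsic volume is small, which is precisely where boundedness of $g_0$ and Lemma \ref{lem:16} come in. Everything else is the by-now routine cut-off and triangle-inequality bookkeeping, with Lemma \ref{lem:5} supplying the only genuinely Riemannian input --- the comparison of $d$ with $\theta^g_x$ on quasi-amenable subsets.
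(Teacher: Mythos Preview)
Your proof is correct and follows essentially the same cut-off strategy as the paper: split off the ``bad'' set where $\theta^g_x(g_0,g_k)$ is large, use Lemma~\ref{lem:5} on the remainder, and control the cut-off error via Proposition~\ref{prop:1}. The implementation differs in two small but noteworthy ways. First, the paper uses a single intermediate semimetric $g_k^\delta := \chi(E_k^\delta) g_0 + \chi(M\setminus E_k^\delta) g_k$ (replacing $g_k$ by $g_0$ on the bad set), so only one error term $d(g_k,g_k^\delta)$ must be estimated; you instead zero out \emph{both} $g_0$ and $g_k$ on the bad set and handle two error terms, which is a bit more work but equally valid. Second, the paper controls $\Vol(E_k^\delta,g_0)$ by passing from $L^1$-convergence of $\theta^g_x(g_k,g_0)$ to convergence in $\mu_g$-measure (Theorem~\ref{thm:11}) and then to $\mu_{g_0}$-measure via absolute continuity (Lemma~\ref{lem:14}), whereas you go directly via Markov's inequality and the pointwise density bound $\sqrt{\det G_0}\le D$ coming from boundedness of $g_0$. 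Your route is slightly more elementary (it avoids Theorem~\ref{thm:11} and Lemma~\ref{lem:14}) at the cost of using the boundedness hypothesis in one extra place; the paper's route would still work if $g_0$ were merely of finite volume at that step, saving boundedness solely for the application of Lemma~\ref{lem:5}.
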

\begin{proof}
  That $g_k \xrightarrow{d} g_0$ implies $g_k \xrightarrow{\Theta_M}
  g_0$ is clear from
  \cite[Prop.~4.25]{clarke:_compl_of_manif_of_rieman_metric},
  so we turn to the converse statement.

  Let $\epsilon > 0$ be given, and let $\delta > 0$ be the number
  guaranteed by Lemma \ref{lem:5}.  We then define
  \begin{equation*}
    E^\delta_k := \left\{
      x \midmid \theta^g_x(g_k(x), g_0(x)) \geq \delta
    \right\}
  \end{equation*}
  and
  \begin{equation*}
    g^\delta_k := \chi(E^\delta_k) g_0 + \chi(M \setminus E^\delta_k) g_k.
  \end{equation*}

  The first thing we see is that $g^\delta_k$ differs from $g_0$ only
  on $M \setminus E^\delta_k$, and that for $x \in M \setminus
  E^\delta_k$, we have $\theta^g_x(g^\delta_k(x), g_0(x)) < \delta$.
  Therefore, by Lemma \ref{lem:5} and the choice of $\delta$, we
  immediately get
  \begin{equation}\label{eq:10}
    d(g^\delta_k, g_0) < \epsilon.
  \end{equation}
  (Note that the boundedness of $g_0$ implies that there exists a
  quasi-amenable subset $\U$ with $g_0 \in \U^0$, so that Lemma
  \ref{lem:5} indeed applies.)
  
  We now claim that
  \begin{equation}\label{eq:8}
    \begin{aligned}
      d(g_k, g_k^\delta) &\leq C(n) \left( \sqrt{\Vol(E^\delta_k,
          g_k)} + \sqrt{\Vol(E_k^\delta, g_k^\delta)}
      \right) \\
      &= C(n) \left( \sqrt{\Vol(E^\delta_k, g_k)} +
        \sqrt{\Vol(E_k^\delta, g_0)} \right).
    \end{aligned}
  \end{equation}
  The first line follows from Proposition \ref{prop:1}.  The second
  line follows because $g_k^\delta$ coincides with $g_0$ on
  $E_k^\delta$.
  
  Now, because of Lemma \ref{lem:6}, we can choose $k$ large enough
  that $\Vol(E^\delta_k, g_k) \leq \Vol(E^\delta_k, g_0) +
  \epsilon^2$.  Furthermore, by Theorem \ref{thm:11}, $g_k
  \xrightarrow{\Theta_M} g_0$ implies that the function
  $\theta^g_x(g_k(x), g_0(x))$ on $M$ converges to zero in
  $\mu_g$-measure.  By Lemma \ref{lem:14}, the convergence also holds
  in $\mu_{g_0}$-measure, so we can choose $k$ large enough that
  $\Vol(E^\delta_k, g_0) < \epsilon^2$.  This together with
  (\ref{eq:8}) implies that for $k$ large enough,
  \begin{equation}\label{eq:9}
    d(g_k, g_k^\delta) \leq C(n) \cdot (\sqrt{2} + 1) \epsilon.
  \end{equation}
  
  Thus, \eqref{eq:10}, \eqref{eq:9}, and the triangle inequality imply
  that for $k$ large enough,
  \begin{equation*}
    d(g_k, g_0) < C(n) \cdot (\sqrt{2} + 2) \epsilon.
  \end{equation*}
  Since $\epsilon$ was arbitrary, the fact that $d(g_k, g_0)
  \rightarrow 0$ follows.
\end{proof}

\subsection{The topology on $\Mf$ and the completions of
  $\M$}\label{sec:topol-mf-compl}

The next goal is to use the results of the last subsection to extend
Proposition \ref{prop:3} to allow the limit semimetric to be any
element of $\Mf$.  To do so, we will use a strategy similar to that of
the proof of
\cite[Thm.~5.14]{clarke:_compl_of_manif_of_rieman_metric}.
The basic idea is to reduce the case of an unbounded limit semimetric
to that of a bounded one by multiplying the sequence in question and
its limit with appropriate positive functions that tame the unbounded
parts of the limit semimetric.  If we do this carefully, then we can
use some arguments taking advantage of Proposition \ref{prop:8} to
control how far (with respect to $d$) these conformal changes move the
sequence and limit within $\Mf$.

To begin with, let's investigate how conformal changes affect the
$\TM$-distance between points.

\begin{lemma}\label{lem:8}
  Let $g_0, g_1 \in \Mf$, and let $\rho$ be a measurable, positive
  function with $\rho(x) \leq 1$ for all $x \in M$.  Then
  $\Theta_M(\rho g_0, \rho g_1) \leq \Theta_M(g_0, g_1)$.
\end{lemma}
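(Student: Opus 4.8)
The plan is to reduce the inequality to a pointwise statement about the finite-dimensional distance functions $\theta^g_x$ and then integrate. Precisely, I claim that for every $x \in M$, every $a_0, a_1 \in \cl(\Matx)$, and every real number $r$ with $0 < r \leq 1$, one has $\theta^g_x(r a_0, r a_1) \leq \theta^g_x(a_0, a_1)$. Granting this, the lemma follows: first note that $\rho g_0, \rho g_1 \in \Mf$ (since $\mu_{\rho g_\alpha} = \rho^{n/2}\mu_{g_\alpha} \leq \mu_{g_\alpha}$ because $\rho \leq 1$), so that $\Theta_M(\rho g_0, \rho g_1)$ is defined via the formula of Proposition \ref{prop:6}; then apply the pointwise estimate with $a_\alpha = g_\alpha(x)$ and $r = \rho(x)$ for a.e.\ $x$ and integrate the resulting (measurable) inequality $\theta^g_x(\rho(x) g_0(x), \rho(x) g_1(x)) \leq \theta^g_x(g_0(x), g_1(x))$ over $M$ against $\mu_g$.

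To prove the pointwise estimate, I would first take $a_0, a_1 \in \Matx$ and consider the homothety $\phi_r \colon \Matx \to \Matx$, $a \mapsto ra$. Since $\phi_r$ is the restriction of a linear map on $\satx$, its differential at each point is again $b \mapsto rb$, and a short coordinate computation---using $(ra)^{ij} = r^{-1}a^{ij}$ and $\det(r A) = r^n \det A$---gives
\begin{equation*}
  \langle rb, rc \rangle^0_{ra} = \tr_{ra}\bigl((rb)(rc)\bigr)\det(r A) = \tr_a(bc)\cdot r^n \det A = r^n \langle b, c \rangle^0_a,
\end{equation*}
i.e.\ $\phi_r^\ast\langle \cdot, \cdot \rangle^0 = r^n \langle \cdot, \cdot \rangle^0$. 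Consequently $\phi_r$ multiplies the $\langle \cdot, \cdot \rangle^0$-length of every path by the constant factor $r^{n/2}$, so any path joining $a_0$ to $a_1$ is carried by $\phi_r$ to a path joining $ra_0$ to $ra_1$ of exactly $r^{n/2}$ times the length; taking the infimum over such paths yields $\theta^g_x(ra_0, ra_1) \leq r^{n/2}\theta^g_x(a_0, a_1) \leq \theta^g_x(a_0, a_1)$, the last step because $0 < r \leq 1$ (and $n \geq 1$). To extend this to $a_0, a_1 \in \cl(\Matx)$, I would invoke Theorem \ref{thm:10}: choose sequences $\{a_\alpha^k\} \subset \Matx$ with $a_\alpha^k \to a_\alpha$ in $\satx$; then $r a_\alpha^k \to r a_\alpha$ in $\satx$, and by (\ref{eq:33}) together with the estimate just established,
\begin{equation*}
  \theta^g_x(r a_0, r a_1) = \lim_{k \to \infty}\theta^g_x(r a_0^k, r a_1^k) \leq \lim_{k \to \infty}\theta^g_x(a_0^k, a_1^k) = \theta^g_x(a_0, a_1).
\end{equation*}

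I do not anticipate a real obstacle; the one point needing care is the scaling computation for $\langle \cdot, \cdot \rangle^0$, where one must check that the two inverse-metric contractions in $\tr_{ra}$ (factor $r^{-2}$) exactly cancel the two copies of $d\phi_r$ (factor $r^{2}$), so that the only surviving factor is the $r^n$ coming from $\det(rA)$---giving $r^{n/2} \leq 1$ on lengths, hence the inequality in the direction claimed. The passage to $\cl(\Matx)$ via Theorem \ref{thm:10} and the final integration are then routine.
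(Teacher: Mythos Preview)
Your proposal is correct and follows essentially the same approach as the paper: reduce to the pointwise inequality $\theta^g_x(r a_0, r a_1) \leq \theta^g_x(a_0, a_1)$ via the scaling computation $\langle rb, rc \rangle^0_{ra} = r^n \langle b, c \rangle^0_a$, and then integrate. Your version is in fact slightly more careful than the paper's, since you explicitly verify $\rho g_\alpha \in \Mf$ and extend the pointwise estimate to $a_0, a_1 \in \cl(\Matx)$ via Theorem~\ref{thm:10}, whereas the paper's proof tacitly works with paths in $\Matx$ without addressing the case where $g_\alpha(x)$ is degenerate.
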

\begin{proof}
  The statement would follow immediately if we showed that
  $\theta^g_x(\rho g_0(x), \rho g_1(x)) \leq \theta^g_x(g_0(x),
  g_1(x))$ for all $x \in M$.  So fix an arbitrary $x \in M$ and
  $\epsilon > 0$, and let $a_t$, for $t \in [0, 1]$, be any path in
  $\Matx$ with endpoints $g_0(x)$ and $g_1(x)$ such that
  \begin{equation*}
    L^0(a_t) \leq \theta^g_x(g_0(x), g_1(x)) + \epsilon,
  \end{equation*}
  where $L^0(a_t)$ denotes the length of $a_t$ as measured by $\langle
  \cdot, \cdot \rangle^0$.

  Since $\rho(x) a_t$ is a path from $\rho(x) g_0(x)$ to $\rho(x)
  g_1(x)$, we have
  \begin{equation*}
    \begin{aligned}
      \theta^g_x (\rho(x) g_0(x), \rho(x) g_1(x)) &\leq L^0(\rho(x)
      a_t) = \integral{0}{1}{\sqrt{\langle \rho(x) a'_t, \rho(x) a'_t
          \rangle^0_{\rho(x) a_t}}}{dt} \\
      &= \integral{0}{1}{\sqrt{\tr_{\rho(x) a_t}((\rho(x) a'_t)^2)
          \det(\rho(x) A_t)}}{dt}\\
      &= \integral{0}{1}{\rho(x)^{n/2}
        \sqrt{\tr_{a_t}((a'_t)^2) \det(A_t)}}{dt} \\
      &\leq \integral{0}{1}{\sqrt{\langle a'_t, a'_t
          \rangle^0_{a_t}}}{dt} = L^0(a_t) \leq \theta^g_x(g_0(x),
      g_1(x)) + \epsilon.
    \end{aligned}
  \end{equation*}
  Since $x$ and $\epsilon$ were arbitrary, we have the desired result.
\end{proof}

A simple consequence is that a $\TM$-convergent sequence that is
multiplied with such a function $\rho$ is still $\TM$-convergent.

Next, we need to (algebraically) extend the exponential mapping as
given by the expression in Proposition \ref{prop:8} to a more general
class of functions and basepoints.  In the following, we will make the
maximal such definition that still guarantees that the image of this
extended ``exponential mapping'' has finite volume if the basepoint
does.

\begin{lemma}\label{lem:17}
  For each $\tilde{g} \in \Mf$, define
  \begin{equation*}
    \mathcal{F}_{\tilde{g}} :=
    \left\{
      \zeta \in L^2(M, \tilde{g}) \midmid \zeta(x) \geq - \frac{4}{n}\
      \textnormal{for all}\ x \in M
    \right\}.
  \end{equation*}

  Then there exists a map
  \begin{equation}\label{eq:4}
    \begin{aligned}
      \psi_{\tilde{g}} : \mathcal{F}_{\tilde{g}} &\rightarrow \Mf \\
      \zeta &\mapsto \left( 1 + \frac{n}{4} \zeta \right)^{4/n}
      \tilde{g}.
    \end{aligned}
  \end{equation}
  (Note that $\psi_{\tilde{g}}(\zeta)$ is formally the same expression
  as $\exp_{\tilde{g}}(\zeta \tilde{g})$, if $\tilde{g} \in \M$ and
  $\zeta \in C^\infty(M)$ with $\zeta > - \frac{4}{n}$.)
\end{lemma}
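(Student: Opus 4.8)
The plan is to verify that the formula in \eqref{eq:4} genuinely produces an element of $\Mf$ for every $\zeta \in \mathcal{F}_{\tilde g}$; once that is done, $\psi_{\tilde g}$ is simply this assignment and there is nothing further to construct. So the whole content of the lemma is (i) well-definedness of $\rho \tilde g$ as a measurable semimetric and (ii) finiteness of its total volume.

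For (i), I would first observe that $\rho := (1 + \frac n4 \zeta)^{4/n}$ is a well-defined, nonnegative, measurable function on $M$: the defining constraint $\zeta(x) \geq -\frac 4n$ of $\mathcal{F}_{\tilde g}$ is precisely what guarantees $1 + \frac n4 \zeta(x) \geq 0$ for every $x$, so the real power $t \mapsto t^{4/n}$ may be applied pointwise, and $\rho$ is measurable as the composition of the measurable function $1 + \frac n4 \zeta$ with a function continuous on $[0, \infty)$. Then $\rho \tilde g$ is a measurable section of $S^2 T^* M$ (in any local trivialization it is the measurable scalar $\rho$ times a matrix of measurable functions), and since $\rho \geq 0$ and $\tilde g$ is positive semidefinite at each point, $\rho(x) \tilde g(x)$ is positive semidefinite for every $x$. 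Hence $\rho \tilde g \in \Mm$.

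For (ii), I would use the relation $\mu_{\rho \tilde g} = \rho^{n/2}\, \mu_{\tilde g}$ together with the convenient identity $\rho^{n/2} = (1 + \frac n4 \zeta)^2$. Expanding the square and integrating against $\mu_{\tilde g}$ gives
\[
  \Vol(M, \rho \tilde g) = \int_M \left(1 + \frac n4 \zeta\right)^2 d\mu_{\tilde g} = \Vol(M, \tilde g) + \frac n2 \int_M \zeta\, d\mu_{\tilde g} + \frac{n^2}{16} \int_M \zeta^2\, d\mu_{\tilde g}.
\]
The first term is finite because $\tilde g \in \Mf$; the third is finite because $\zeta \in L^2(M, \tilde g)$; and the middle term is finite because $\mu_{\tilde g}(M) < \infty$, so the Cauchy--Schwarz inequality gives $L^2(M, \tilde g) \subseteq L^1(M, \tilde g)$ and hence $\zeta \in L^1(M, \tilde g)$. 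Therefore $\rho \tilde g \in \Mf$, which is exactly what is needed for $\psi_{\tilde g}$ to be well-defined as a map into $\Mf$.

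There is no serious obstacle here. The only point that deserves care is noticing that each of the two standing hypotheses is used twice: the pointwise bound $\zeta \geq -\frac 4n$ is exactly what makes the fractional power meaningful on all of $M$, and the finiteness of $\Vol(M, \tilde g)$ enters both through the constant term in the display above and---via the embedding $L^2(M, \tilde g) \subseteq L^1(M, \tilde g)$ on a finite measure space---through the cross term $\int_M \zeta\, d\mu_{\tilde g}$, which is the only summand whose finiteness is not completely immediate.
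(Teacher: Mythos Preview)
Your proof is correct and follows essentially the same approach as the paper: verify that the conformal factor is nonnegative so that the result is a semimetric, then expand $\mu_{\rho\tilde g} = (1+\tfrac{n}{4}\zeta)^2\mu_{\tilde g}$ and check integrability of each summand using $\tilde g\in\Mf$, $\zeta\in L^2(M,\tilde g)$, and the inclusion $L^2\subseteq L^1$ on a finite measure space. If anything, you are slightly more explicit than the paper about the measurability of $\rho$ in part~(i).
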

\begin{proof}
  We have to prove that if $\zeta \in \mathcal{F}_{\tilde{g}}$, then
  $\psi_{\tilde{g}}(\zeta) \in \Mf$.  From (\ref{eq:4}) and the
  definition of $\mathcal{F}_{\tilde{g}}$, it is clear that
  $\psi_{\tilde{g}}(\zeta)$ is a semimetric, since the conformal
  factor in front of $\tilde{g}$ is nonnegative.

  To see that $\psi_{\tilde{g}}(\zeta)$ has finite volume, note that
  \begin{equation*}
    \mu_{\psi_{\tilde{g}}} = \left( 1 + \frac{n}{4} \zeta \right)^2
    \mu_{\tilde{g}} =
    \left(
      1 + \frac{n}{2} \zeta + \frac{n^2}{16}\zeta^2
    \right) \mu_{\tilde{g}}.
  \end{equation*}
  Thus, finite volume would follow if we could show that each summand
  in the parentheses on the right is in $L^1(M, \tilde{g})$.  But the
  constant function $1 \in L^2(M, \tilde{g})$ by finite volume of
  $\mu_{\tilde{g}}$.  Furthermore, $\zeta^2 \in L^1(M, \tilde{g})$
  since $\zeta \in L^2(M, \tilde{g})$.  Finally, as is well known (it
  is a simple consequence of Hölder's Inequality), finite volume of
  $(M, \tilde{g})$ implies that $L^2(M, \tilde{g}) \subset L^1(M,
  \tilde{g})$.  Therefore, $\zeta \in L^1(M, \tilde{g})$, completing
  what was to be shown.
\end{proof}

We will retain the notation $\mathcal{F}_{\tilde{g}}$ and
$\psi_{\tilde{g}}$ from the previous lemma for the remainder of the
section.

The following two lemmas will allow us to control the distance between
different conformal changes of a semimetric---one of the goals we
outlined at the beginning of this subsection.  We need to first
restrict to basepoint semimetrics that are bounded, and can then
extend this to the general case.

\begin{lemma}\label{lem:11}
  Let $\tilde{g} \in \Mf$ be a bounded, measurable semimetric, and let
  $\kappa, \lambda \in \mathcal{F}_{\tilde{g}}$.  Then
  \begin{equation*}
    d(\psi_{\tilde{g}}(\kappa), \psi_{\tilde{g}}(\lambda)) \leq \sqrt{n} \norm{\lambda - \kappa}_{\tilde{g}}.
  \end{equation*}
\end{lemma}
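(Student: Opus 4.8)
The plan is to connect $\psi_{\tilde{g}}(\kappa)$ to $\psi_{\tilde{g}}(\lambda)$ by the conformal path coming from the affine interpolation of the two conformal factors, to observe that this path has $L^2$-length exactly $\sqrt{n}\,\norm{\lambda-\kappa}_{\tilde{g}}$, and then to remove the regularity hypotheses on $\tilde{g}$, $\kappa$ and $\lambda$ by successive approximations.

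I would first settle the smooth case: $\tilde{g}\in\M$ and $\kappa,\lambda\in C^\infty(M)$ with $\inf_M\kappa>-4/n$ and $\inf_M\lambda>-4/n$. Put $u_t:=1+\frac{n}{4}\bigl((1-t)\kappa+t\lambda\bigr)$ for $t\in[0,1]$; being a convex combination of the strictly positive functions $1+\frac{n}{4}\kappa$ and $1+\frac{n}{4}\lambda$, it is strictly positive on $M$, so $g_t:=u_t^{4/n}\tilde{g}$ is a genuine smooth path in $\M$ from $\psi_{\tilde{g}}(\kappa)$ to $\psi_{\tilde{g}}(\lambda)$ (cf.\ Proposition~\ref{prop:8} and Lemma~\ref{lem:17}). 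Since $g'_t=u_t^{4/n-1}(\lambda-\kappa)\tilde{g}$, one computes $\tr_{g_t}((g'_t)^2)=n\,u_t^{-2}(\lambda-\kappa)^2$ and $\mu_{g_t}=u_t^2\mu_{\tilde{g}}$, so that
\begin{equation*}
  \norm{g'_t}_{g_t}^2=\integral{M}{}{n\,u_t^{-2}(\lambda-\kappa)^2\,u_t^2}{d \mu_{\tilde{g}}}=n\,\norm{\lambda-\kappa}_{\tilde{g}}^2
\end{equation*}
for every $t$. Hence this path has constant speed and length $\sqrt{n}\,\norm{\lambda-\kappa}_{\tilde{g}}$, and therefore $d(\psi_{\tilde{g}}(\kappa),\psi_{\tilde{g}}(\lambda))\le\sqrt{n}\,\norm{\lambda-\kappa}_{\tilde{g}}$ in this case. (When $\lambda-\kappa$ is constant this is exactly one of the geodesics of Proposition~\ref{prop:8}.)

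I would then relax the hypotheses in three steps, each passing to a limit in the inequality via the triangle inequality. \emph{(i) $\tilde{g}\in\M$ smooth, $\kappa,\lambda\in\mathcal{F}_{\tilde{g}}$ arbitrary.} Approximate $\kappa$ and $\lambda$ in $L^2(M,\tilde{g})$ and a.e.\ by smooth functions $\kappa_j,\lambda_j$ with infimum $>-4/n$ (truncate below at $-4/n+1/j$, then mollify). By the smooth case $\{\psi_{\tilde{g}}(\kappa_j)\}$ is $d$-Cauchy in $\M$, hence converges in $\overline{(\M,d)}\cong\Mfhat$; since $\psi_{\tilde{g}}(\kappa_j)(x)\to\psi_{\tilde{g}}(\kappa)(x)$ a.e., Theorem~\ref{thm:6} identifies the limit as $[\psi_{\tilde{g}}(\kappa)]$, so $\psi_{\tilde{g}}(\kappa_j)\xrightarrow{d}\psi_{\tilde{g}}(\kappa)$, and likewise for $\lambda$; letting $j\to\infty$ in $d(\psi_{\tilde{g}}(\kappa_j),\psi_{\tilde{g}}(\lambda_j))\le\sqrt{n}\,\norm{\kappa_j-\lambda_j}_{\tilde{g}}$ gives the bound. \emph{(ii) $\tilde{g}\in\Mf$ bounded, $\kappa,\lambda\in\mathcal{F}_{\tilde{g}}$ bounded.} Choose smooth metrics $\tilde{g}_j$ in a common quasi-amenable subset with $\tilde{g}_j\xrightarrow{L^2}\tilde{g}$ and, after passing to a subsequence, $\tilde{g}_j\xrightarrow{\omega}\tilde{g}$ (Proposition~\ref{prop:5}, using that a bounded semimetric lies in such a $\U^0$); then, as $(1+\frac{n}{4}\kappa)^{4/n}$ is a fixed bounded function, $\psi_{\tilde{g}_j}(\kappa)\xrightarrow{L^2}\psi_{\tilde{g}}(\kappa)$ within a common quasi-amenable subset, whence $\psi_{\tilde{g}_j}(\kappa)\xrightarrow{d}\psi_{\tilde{g}}(\kappa)$ by Lemma~\ref{lem:4}, while $\norm{\kappa-\lambda}_{\tilde{g}_j}\to\norm{\kappa-\lambda}_{\tilde{g}}$ by Theorem~\ref{thm:9} (the integrand $(\kappa-\lambda)^2$ is bounded); applying step (i) to each $\tilde{g}_j$ and letting $j\to\infty$ gives the bound. \emph{(iii) The general case.} Set $\kappa_N:=\kappa\,\chi(\{\kappa\le N\})$ and $\lambda_N:=\lambda\,\chi(\{\lambda\le N\})$; these lie in $\mathcal{F}_{\tilde{g}}$, are bounded, and converge to $\kappa,\lambda$ in $L^2(M,\tilde{g})$. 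Since $\psi_{\tilde{g}}(\kappa_N)=\tilde{g}$ on $\{\kappa>N\}$, Proposition~\ref{prop:1} together with Chebyshev's inequality gives $d(\psi_{\tilde{g}}(\kappa_N),\psi_{\tilde{g}}(\kappa))\le C(n)\bigl(\sqrt{\Vol(\{\kappa>N\},\tilde{g})}+\sqrt{\Vol(\{\kappa>N\},\psi_{\tilde{g}}(\kappa))}\bigr)\to 0$ as $N\to\infty$; combining this with step (ii) applied to $\kappa_N,\lambda_N$ yields the claim.

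The substantive content is the short length computation in the second paragraph; everything after is bookkeeping for the approximation. The one point requiring care is the identification of the $d$-limit of the approximating sequence in step (i): one must use the identification $\overline{(\M,d)}\cong\Mfhat$ of Theorem~\ref{thm:6} and verify that the deflated sets of the approximants converge to that of the limit, so that the $d$-limit is the intended equivalence class of $\Mfhat$ and not merely some element of it.
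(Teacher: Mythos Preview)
Your argument is correct, and the length computation for the conformal path in the smooth case is exactly the geometric core of the estimate. The paper's own proof is much shorter only because it outsources both ingredients to a previous paper: it cites \cite[Lemma~5.16]{clarke:_compl_of_manif_of_rieman_metric} for the inequality when $\kappa,\lambda$ are smooth (with $\tilde{g}\in\Mf$ bounded already allowed there), and cites the proof of \cite[Thm.~5.14]{clarke:_compl_of_manif_of_rieman_metric} for the existence of smooth approximants $\kappa_k,\lambda_k$ with $\psi_{\tilde{g}}(\kappa_k)\xrightarrow{d}\psi_{\tilde{g}}(\kappa)$ and $\psi_{\tilde{g}}(\lambda_k)\xrightarrow{d}\psi_{\tilde{g}}(\lambda)$; a single triangle inequality then finishes. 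Because the cited lemma already handles bounded measurable $\tilde{g}$, the paper needs only one approximation step (in $\kappa,\lambda$), whereas you start from smooth $\tilde{g}$ and therefore need the extra step~(ii) to pass to bounded measurable $\tilde{g}$. Your route buys self-containment and makes the conformal path visible; the paper's route is terser but opaque without the earlier paper in hand.

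Two minor remarks on your bookkeeping. In step~(ii), the convergence $\norm{\kappa-\lambda}_{\tilde{g}_j}\to\norm{\kappa-\lambda}_{\tilde{g}}$ is not literally Theorem~\ref{thm:9} (which concerns volumes of sets); rather, it follows from Theorem~\ref{thm:9} via approximation of the bounded function $(\kappa-\lambda)^2$ by simple functions, together with the uniform bound on $\Vol(M,\tilde{g}_j)$. In step~(iii), the fact that $\Vol(\{\kappa>N\},\psi_{\tilde{g}}(\kappa))\to 0$ is not Chebyshev but absolute continuity of the integral: $(1+\tfrac{n}{4}\kappa)^2\in L^1(M,\tilde{g})$ since $\psi_{\tilde{g}}(\kappa)\in\Mf$ by Lemma~\ref{lem:17}, and $\mu_{\tilde{g}}(\{\kappa>N\})\to 0$ by Chebyshev. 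Neither point affects the validity of your argument.
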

\begin{proof}
  By the proof of
  \cite[Thm.~5.14]{clarke:_compl_of_manif_of_rieman_metric},
  we can find sequences $\{\kappa_k\}$ and $\{\lambda_k\}$ of smooth
  functions with the following properties:
  \begin{itemize}
  \item $\{\kappa_k\}$ and $\{\lambda_k\}$ converge in $L^2(M,
    \tilde{g})$ to $\kappa$ and $\lambda$, respectively,
  \item $\kappa_k, \lambda_k > -\frac{4}{n}$ for all $k \in \N$, and
  \item we have
    \begin{equation*}
      \lim_{k \rightarrow \infty} d(\psi_{\tilde{g}}(\kappa_k), \psi_{\tilde{g}}(\kappa)) = 0 =
      \lim_{k \rightarrow \infty} d(\psi_{\tilde{g}}(\lambda_k), \psi_{\tilde{g}}(\lambda)).
    \end{equation*}
  \end{itemize}
  Furthermore, \cite[Lemma
  5.16]{clarke:_compl_of_manif_of_rieman_metric} implies
  \begin{equation*}
    d(\psi_{\tilde{g}}(\kappa_k), \psi_{\tilde{g}}(\lambda_k)) \leq \sqrt{n} \norm{\lambda_k -
      \kappa_k}_{\tilde{g}}.
  \end{equation*}
  Thus, the triangle inequality gives
  \begin{equation*}
    \begin{aligned}
      d(\psi_{\tilde{g}}(\kappa), \psi_{\tilde{g}}(\lambda)) &\leq
      \lim_{k \rightarrow \infty} [ d(\psi_{\tilde{g}}(\kappa),
      \psi_{\tilde{g}}(\kappa_k)) + d(\psi_{\tilde{g}}(\kappa_k),
      \psi_{\tilde{g}}(\lambda_k)) + d(\psi_{\tilde{g}}(\lambda_k), \psi_{\tilde{g}}(\lambda)) ] \\
      &\leq \lim_{k \rightarrow \infty} \sqrt{n} \norm{\lambda_k -
        \kappa_k}_{\tilde{g}} = \sqrt{n} \norm{\lambda -
        \kappa}_{\tilde{g}}.
    \end{aligned}
  \end{equation*}
\end{proof}

\begin{lemma}\label{lem:12}
  Let $\tilde{g} \in \Mf$ be any element, and let $\kappa, \lambda \in
  \mathcal{F}_{\tilde{g}}$.  Then
  \begin{equation*}
    d(\psi_{\tilde{g}}(\kappa), \psi_{\tilde{g}}(\lambda)) \leq \sqrt{n} \norm{\lambda - \kappa}_{\tilde{g}}.
  \end{equation*}
\end{lemma}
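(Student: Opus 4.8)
The plan is to bootstrap from Lemma \ref{lem:11} by approximating the (possibly unbounded) semimetric $\tilde{g} \in \Mf$ from the inside by bounded semimetrics. For each $k \in \N$, I would set $E_k := \{ x \in M \mid \abs{\tilde{g}_{ij}(x)} \leq k$ in every chart of the fixed amenable atlas containing $x \}$ and $\tilde{g}^k := \chi(E_k) \tilde{g}$. Then $\tilde{g}^k$ is a bounded, measurable semimetric, and since $\mu_{\tilde{g}^k} = \chi(E_k) \mu_{\tilde{g}} \leq \mu_{\tilde{g}}$ we have $\tilde{g}^k \in \Mf$; moreover $\kappa, \lambda \in L^2(M, \tilde{g}^k)$ and still satisfy the bound $\kappa, \lambda \geq -4/n$, so $\kappa, \lambda \in \mathcal{F}_{\tilde{g}^k}$. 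A glance at the formula (\ref{eq:4}) shows that $\psi_{\tilde{g}^k}(\kappa) = \chi(E_k) \psi_{\tilde{g}}(\kappa) \in \Mf$, and likewise $\psi_{\tilde{g}^k}(\lambda) = \chi(E_k)\psi_{\tilde{g}}(\lambda)$.

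First I would apply Lemma \ref{lem:11} to the bounded semimetric $\tilde{g}^k$, which gives
\begin{equation*}
  d(\psi_{\tilde{g}^k}(\kappa), \psi_{\tilde{g}^k}(\lambda)) \leq \sqrt{n}\, \norm{\lambda - \kappa}_{\tilde{g}^k} \leq \sqrt{n}\, \norm{\lambda - \kappa}_{\tilde{g}},
\end{equation*}
where the last step uses $\mu_{\tilde{g}^k} \leq \mu_{\tilde{g}}$. The key point left to verify is that $\psi_{\tilde{g}^k}(\kappa) \xrightarrow{d} \psi_{\tilde{g}}(\kappa)$ as $k \to \infty$ (and similarly with $\lambda$). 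Since $\psi_{\tilde{g}^k}(\kappa) = \chi(E_k)\psi_{\tilde{g}}(\kappa)$ agrees with $\psi_{\tilde{g}}(\kappa)$ on $E_k$ and vanishes on $M \setminus E_k$, Proposition \ref{prop:1} applies (both semimetrics lie in $\Mf$) and yields $d(\psi_{\tilde{g}^k}(\kappa), \psi_{\tilde{g}}(\kappa)) \leq C(n)\sqrt{\Vol(M \setminus E_k, \psi_{\tilde{g}}(\kappa))}$, the other term there being zero because the cut-off semimetric is identically zero off $E_k$. Now $E_k \uparrow M$, since each component $\tilde{g}_{ij}$ is a finite-valued measurable function, so $M \setminus E_k \downarrow \emptyset$; and $\psi_{\tilde{g}}(\kappa)$ has finite total volume by Lemma \ref{lem:17}. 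Hence $\Vol(M \setminus E_k, \psi_{\tilde{g}}(\kappa)) \to 0$ by continuity from above of the finite measure $\mu_{\psi_{\tilde{g}}(\kappa)}$, as desired.

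Finally I would combine these facts through the triangle inequality: for every $k$,
\begin{equation*}
  d(\psi_{\tilde{g}}(\kappa), \psi_{\tilde{g}}(\lambda)) \leq d(\psi_{\tilde{g}}(\kappa), \psi_{\tilde{g}^k}(\kappa)) + d(\psi_{\tilde{g}^k}(\kappa), \psi_{\tilde{g}^k}(\lambda)) + d(\psi_{\tilde{g}^k}(\lambda), \psi_{\tilde{g}}(\lambda)),
\end{equation*}
and then let $k \to \infty$: the two outer terms tend to $0$ while the middle term stays bounded by $\sqrt{n}\,\norm{\lambda - \kappa}_{\tilde{g}}$, which gives the claimed inequality. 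The steps requiring any care are the identity $\psi_{\tilde{g}^k}(\kappa) = \chi(E_k)\psi_{\tilde{g}}(\kappa)$ (immediate from the explicit formula) and the convergence $\Vol(M \setminus E_k, \psi_{\tilde{g}}(\kappa)) \to 0$; neither is a genuine obstacle, since the substantive work has already been done in Lemma \ref{lem:11}, Proposition \ref{prop:1}, and the finite-volume statement of Lemma \ref{lem:17}. The only place where one must be slightly attentive is ensuring that $\psi_{\tilde{g}}(\kappa)$ indeed has finite volume, which is exactly what Lemma \ref{lem:17} guarantees.
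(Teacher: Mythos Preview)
Your proof is correct, but it takes a genuinely different route from the paper's. The paper avoids any limiting procedure by a one-shot conformal reparametrization: it chooses a positive measurable function $\xi$ so that $\tilde{g}_0 := \xi \tilde{g}$ is bounded, then defines
\[
  \kappa_0 := \xi^{-n/4}\Bigl(\kappa + \tfrac{4}{n} - \xi^{n/4}\Bigr), \qquad
  \lambda_0 := \xi^{-n/4}\Bigl(\lambda + \tfrac{4}{n} - \xi^{n/4}\Bigr),
\]
and checks algebraically that $\psi_{\tilde{g}_0}(\kappa_0) = \psi_{\tilde{g}}(\kappa)$ and $\psi_{\tilde{g}_0}(\lambda_0) = \psi_{\tilde{g}}(\lambda)$; a direct computation using $\mu_{\tilde{g}_0} = \xi^{n/2}\mu_{\tilde{g}}$ then shows $\norm{\lambda_0 - \kappa_0}_{\tilde{g}_0} = \norm{\lambda - \kappa}_{\tilde{g}}$, so a single application of Lemma~\ref{lem:11} finishes the argument. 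Your approach instead truncates $\tilde{g}$ to bounded $\tilde{g}^k$, applies Lemma~\ref{lem:11} there, controls the truncation error via Proposition~\ref{prop:1} and the finite-volume conclusion of Lemma~\ref{lem:17}, and passes to the limit. The paper's argument is slicker and uses less prior machinery, exploiting the specific algebraic form of $\psi$; your cut-off argument is more routine but perfectly valid, and has the virtue of not requiring one to discover the reparametrization identity.
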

\begin{proof}
  Let $\xi$ be a positive, measurable function such that $\tilde{g}_0
  := \xi \tilde{g}$ is bounded.  For the remainder of the proof, we
  abbreviate $\psi := \psi_{\tilde{g}}$ and $\psi_0 :=
  \psi_{\tilde{g}_0}$.  Define
  \begin{equation*}
    \kappa_0 :=
    \xi^{-n/4} \left(
      \kappa + \frac{4}{n} - \xi^{n/4}
    \right) \qquad \textnormal{and} \qquad
    \lambda_0 := \xi^{-n/4} \left(
      \lambda + \frac{4}{n} - \xi^{n/4}
    \right).
  \end{equation*}
  Then a simple calculation shows that $\psi_0(\kappa_0) =
  \psi(\kappa)$ and $\psi(\lambda_0) = \psi(\lambda)$.
  
  But on the other hand, since $\tilde{g}_0$ is bounded, we can apply
  the previous lemma to get
  \begin{equation*}
    \begin{aligned}
      d(\psi_0(\kappa_0), \psi_0(\lambda_0)) &\leq \sqrt{n}
      \norm{\lambda_0 - \kappa_0}_{\tilde{g}_0} \\
      &= \sqrt{n} \left( \integral{M}{}{ \left( \xi^{-n/4} \left(
              \lambda + \frac{4}{n} - \xi^{n/4} \right) -
          \right. \right. \\
            &\qquad \qquad \left. \left. \xi^{-n/4} \left(
              \kappa + \frac{4}{n} - \xi^{n/4} \right) \right)^2}{d
          \mu_{\tilde{g}_0}} \right)^{1/2} \\
      &= \sqrt{n} \left( \integral{M}{}{ \left( \lambda - \kappa
          \right)^2 \xi^{-n/2}}{d \mu_{\tilde{g_0}}} \right)^{1/2} =
      \sqrt{n} \left( \integral{M}{}{ \left( \lambda - \kappa
          \right)^2}{d \mu_{\tilde{g}}}
      \right)^{1/2} \\
      &= \sqrt{n} \norm{\lambda - \kappa}_{\tilde{g}}.
    \end{aligned}
  \end{equation*}
  This gives the desired result.
\end{proof}

The last technical result that we will need for the moment concerns
the behavior of the norms of bounded functions with respect to a
$d$-convergent sequence of semimetrics.

\begin{lemma}\label{lem:9}
  Let $g_k, g_0 \in \Mf$ with $g_k \xrightarrow{\TM} g_0$, and let
  $\lambda$ be a bounded, measurable function on $M$.  Then $\lambda
  \in L^2(M, g_i)$ for all $i = 0, 1, 2, \dots$, and
  $\norm{\lambda}_{g_k} \rightarrow \norm{\lambda}_{g_0}$.
\end{lemma}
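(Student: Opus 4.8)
The plan is to reduce the claim to the $L^1$-convergence of Radon--Nikodym derivatives supplied by Theorem~\ref{thm:5}. Write $C := \sup_{x \in M} \abs{\lambda(x)}$, which is finite by hypothesis, and recall that each $\mu_{g_i}$ is absolutely continuous with respect to $\mu_g$, so that the densities $\left( \mu_{g_i}/\mu_g \right)$ are well-defined and $\integral{M}{}{\lambda^2}{d\mu_{g_i}} = \integral{M}{}{\lambda^2 \left( \frac{\mu_{g_i}}{\mu_g} \right)}{d\mu_g}$. First I would dispose of the integrability statement: since $\lambda$ is measurable, $\abs{\lambda} \leq C$, and each $g_i \in \Mf$, we have
\begin{equation*}
  \norm{\lambda}_{g_i}^2 = \integral{M}{}{\lambda^2}{d\mu_{g_i}} \leq C^2 \Vol(M, g_i) < \infty,
\end{equation*}
so $\lambda \in L^2(M, g_i)$ for every $i = 0, 1, 2, \dots$.

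For the convergence of the norms, I would pass to integrals against the fixed reference measure $\mu_g$ using the densities $\left( \mu_{g_i}/\mu_g \right)$, which gives
\begin{equation*}
  \left| \norm{\lambda}_{g_k}^2 - \norm{\lambda}_{g_0}^2 \right|
  = \left| \integral{M}{}{\lambda^2 \left[ \left( \frac{\mu_{g_k}}{\mu_g} \right) - \left( \frac{\mu_{g_0}}{\mu_g} \right) \right]}{d\mu_g} \right|
  \leq C^2 \integral{M}{}{\left| \left( \frac{\mu_{g_k}}{\mu_g} \right) - \left( \frac{\mu_{g_0}}{\mu_g} \right) \right|}{d\mu_g}.
\end{equation*}
By Theorem~\ref{thm:5}, the hypothesis $g_k \xrightarrow{\TM} g_0$ forces $\left( \mu_{g_k}/\mu_g \right) \to \left( \mu_{g_0}/\mu_g \right)$ in $L^1(M, g)$, so the right-hand side tends to $0$ as $k \to \infty$. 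Hence $\norm{\lambda}_{g_k}^2 \to \norm{\lambda}_{g_0}^2$, and since $t \mapsto \sqrt{t}$ is continuous on $[0, \infty)$, we conclude $\norm{\lambda}_{g_k} \to \norm{\lambda}_{g_0}$.

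There is no genuine obstacle in this argument: its only ingredients are the finiteness of the volumes $\Vol(M, g_i)$ (immediate from $g_i \in \Mf$) and the $L^1$-convergence of the density functions, which is exactly the content of Theorem~\ref{thm:5}. The boundedness of $\lambda$ is used only to pull the constant $C^2$ out of the integral, and it cannot be weakened: for an unbounded $\lambda$ the quantities $\integral{M}{}{\lambda^2}{d\mu_{g_k}}$ need not even be finite, so the statement would fail at the level of the first assertion.
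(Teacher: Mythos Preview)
Your proof is correct and follows essentially the same approach as the paper's: both rewrite $\norm{\lambda}_{g_i}^2$ as $\int_M \lambda^2 (\mu_{g_i}/\mu_g)\, d\mu_g$ and then invoke Theorem~\ref{thm:5} for the $L^1$-convergence of the densities, with boundedness of $\lambda$ providing the pointwise control. The paper's proof simply asserts that the conclusion ``follows straightforwardly'' from these two ingredients, whereas you have spelled out the estimate explicitly.
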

\begin{proof}
  We note that for $i = 0, 1, 2, \dots$,
  \begin{equation*}
    \norm{\lambda}_{g_i} = \norm{\lambda
      \sqrt{\left(
          \frac{\mu_{g_i}}{\mu_g}
        \right)}}_g = \left( \integral{M}{}{\lambda^2 \left(
        \frac{\mu_{g_i}}{\mu_g}
      \right)}{d \mu_g} \right)^{1/2}.
  \end{equation*}
  But by Theorem \ref{thm:5},
  \begin{equation*}
    \left(
      \frac{\mu_{g_k}}{\mu_g}
    \right) \xrightarrow{L^1(M, g)} \left(
      \frac{\mu_{g_0}}{\mu_g}
    \right).
  \end{equation*}
  The result then follows straightforwardly from this and from the
  boundedness of $\lambda$.
\end{proof}

With these results at hand, we can prove the equivalence of the
topologies of $d$ and $\TM$ on $\Mf$.

\begin{theorem}\label{thm:2}
  Let $g_k, g_0 \in \Mf$.  Then $g_k \xrightarrow{d} g_0$ if and only
  if $g_k \xrightarrow{\Theta_M} g_0$.
\end{theorem}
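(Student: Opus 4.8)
The plan is to reduce to the bounded case already handled in Proposition \ref{prop:3}, by ``taming'' the unbounded part of $g_0$ with a conformal factor, as outlined just before Lemma \ref{lem:8}. One implication is immediate: by Proposition \ref{prop:6}, $d(g_k, g_0) \to 0$ forces $\Theta_M(g_k, g_0) \to 0$, so it remains to prove that $g_k \xrightarrow{\Theta_M} g_0$ implies $g_k \xrightarrow{d} g_0$.

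Fix $\epsilon > 0$. For a parameter $\tau > 0$, define a measurable function $\rho = \rho_\tau$ on $M$ by $\rho(x) = 1$ if $\abs{(g_0)_{ij}(x)} \le \tau$ for all $i, j$, and $\rho(x) = \tau / \max_{i,j} \abs{(g_0)_{ij}(x)}$ otherwise. Then $0 < \rho \le 1$, the semimetric $\rho g_0$ is bounded (all its coefficients have absolute value $\le \tau$), and $\rho g_0$ differs from $g_0$ only on $U_\tau := \{ x \in M \mid \abs{(g_0)_{ij}(x)} > \tau \text{ for some } i, j \}$. Since $g_0 \in \Mf$ has finite volume and $\bigcap_\tau U_\tau$ is a nullset, $\Vol(U_\tau, g_0) \to 0$ as $\tau \to \infty$, and since $\rho \le 1$ also $\Vol(U_\tau, \rho g_0) \le \Vol(U_\tau, g_0)$. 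Set $\zeta := \frac{4}{n}\big( \rho^{n/4} - 1 \big)$; this function takes values in $(-\frac{4}{n}, 0]$, hence is bounded and lies in $\mathcal{F}_{g_k}$ for every $k$, and satisfies $\psi_{g_k}(0) = g_k$ and $\psi_{g_k}(\zeta) = \rho g_k$. Because $\abs{\zeta} \le \frac{4}{n}$ and $\zeta$ vanishes off $U_\tau$, we have $\norm{\zeta}_{g_0}^2 \le \frac{16}{n^2} \Vol(U_\tau, g_0) \to 0$ as $\tau \to \infty$. We therefore fix $\tau$ large enough that $2 C(n) \sqrt{\Vol(U_\tau, g_0)} < \epsilon/3$---where $C(n)$ is the constant of Proposition \ref{prop:1}---and $\sqrt{n}\, \norm{\zeta}_{g_0} < \epsilon/3$; note that $\rho$ and $\zeta$ depend only on $g_0$, not on $k$.

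Now bound $d(g_k, g_0)$ using the intermediate points $\rho g_k$ and $\rho g_0$. First, since $g_0$ and $\rho g_0$ differ only on $U_\tau$, Proposition \ref{prop:1} gives $d(g_0, \rho g_0) \le C(n)\big( \sqrt{\Vol(U_\tau, g_0)} + \sqrt{\Vol(U_\tau, \rho g_0)} \big) \le 2 C(n) \sqrt{\Vol(U_\tau, g_0)} < \epsilon/3$. Second, Lemma \ref{lem:8} gives $\Theta_M(\rho g_k, \rho g_0) \le \Theta_M(g_k, g_0) \to 0$, and since $\rho g_0$ is bounded, Proposition \ref{prop:3} yields $\rho g_k \xrightarrow{d} \rho g_0$, so $d(\rho g_k, \rho g_0) < \epsilon/3$ for $k$ large. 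Third, Lemma \ref{lem:12} applied to $g_k$ with parameters $0$ and $\zeta$ gives $d(g_k, \rho g_k) = d\big( \psi_{g_k}(0), \psi_{g_k}(\zeta) \big) \le \sqrt{n}\, \norm{\zeta}_{g_k}$, and since $\zeta$ is bounded and $g_k \xrightarrow{\Theta_M} g_0$, Lemma \ref{lem:9} gives $\norm{\zeta}_{g_k} \to \norm{\zeta}_{g_0}$, so $d(g_k, \rho g_k) < \epsilon/3$ for $k$ large. Combining the three estimates with the triangle inequality yields $d(g_k, g_0) < \epsilon$ for all large $k$; since $\epsilon$ was arbitrary, $g_k \xrightarrow{d} g_0$.

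The crux is the third estimate---controlling $d(g_k, \rho g_k)$ uniformly in $k$. This works precisely because the conformal factor $\rho$, and hence the parameter $\zeta$, can be chosen depending only on $g_0$: Lemma \ref{lem:12} then bounds $d(g_k, \rho g_k)$ by $\sqrt{n}\, \norm{\zeta}_{g_k}$, and the only remaining $k$-dependence---that of the volume form $\mu_{g_k}$ inside this norm---is exactly what Lemma \ref{lem:9} handles, via Theorem \ref{thm:5}. Everything else is bookkeeping with the triangle inequality and the finiteness of $\Vol(M, g_0)$.
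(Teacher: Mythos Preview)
Your proof is correct and follows the same overall strategy as the paper---tame $g_0$ by a conformal factor $\rho\le 1$, invoke Lemma~\ref{lem:8} and Proposition~\ref{prop:3} for the bounded piece, and control the conformal legs via Lemma~\ref{lem:12} together with Lemma~\ref{lem:9}.  The implementation, however, is genuinely more economical than the paper's.  The paper first passes to a bounded base $g_0^0=\xi g_0$, works in $L^2(M,g_0^0)$, and then approximates an \emph{unbounded} parameter $\lambda=\tfrac{4}{n}(\xi^{-n/4}-1)$ by bounded $\lambda_l$; this produces a double limit in $k$ and $l$ and requires both legs $d(\psi_0(\lambda_l),g_0)$ and $d(\psi_k(\lambda_l),g_k)$ to be handled through Lemma~\ref{lem:12}.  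You instead choose the shrinking factor $\rho_\tau$ so that it equals $1$ off the set $U_\tau$ where $g_0$ is large; the resulting $\zeta=\tfrac{4}{n}(\rho^{n/4}-1)$ is then automatically bounded in $(-\tfrac{4}{n},0]$, so no $L^2$ approximation step (and no second index $l$) is needed, and Lemma~\ref{lem:9} applies directly to the sequence $g_k$ rather than to a rescaled one.  The price is that the leg $d(g_0,\rho g_0)$ is handled by the volume estimate of Proposition~\ref{prop:1} instead of Lemma~\ref{lem:12}, which is harmless.  One cosmetic point: since $\norm{\zeta}_{g_k}\to\norm{\zeta}_{g_0}$ need not stay strictly below $\epsilon/3$, it is cleanest to choose $\tau$ with $\sqrt{n}\,\norm{\zeta}_{g_0}<\epsilon/6$ (or simply conclude $d(g_k,g_0)<3\epsilon$); this is of course immaterial.
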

\begin{proof}
  As in Proposition \ref{prop:3}, the only statement that needs
  proving is that $g_k \xrightarrow{\Theta_M} g_0$ implies $g_k
  \xrightarrow{d} g_0$.

  So let $\epsilon > 0$ be given, and we wish to see that $d(g_k, g_0)
  < \epsilon$ for $k$ large enough.  The idea of the proof is to find
  a sequence $\{\sigma_l\}$ of measurable, positive functions with
  $\sigma_l \leq 1$ for all $l$, and with the property that $\sigma_l
  g_0$ is bounded.  Then Lemma \ref{lem:8} and Proposition
  \ref{prop:3} apply to give that $\sigma_l g_k \xrightarrow{d}
  \sigma_l g_0$, as $k \rightarrow \infty$, for each $l$.
  Furthermore, if we can arrange that $\sigma_l g_0 \xrightarrow{d}
  g_0$, and we can say that $d(\sigma_l g_k, g_k)$ is close to
  $d(\sigma_l g_0, g_0)$ for large $k$, then we have estimated each
  term on the right-hand side of this double application of the
  triangle inequality:
  \begin{equation*}
    d(g_k, g_0) \leq d(g_k, \sigma_l g_k) + d(\sigma_l g_k, \sigma_l
    g_0) + d(\sigma_l g_0, g_0).
  \end{equation*}
  So let us get down to the details of this argument.

  We first choose a measurable, positive function $\xi$ on $M$ such
  that $\xi \leq 1$ and $\xi g_0$ is bounded.  Set $\rho := \xi^{-1}$
  and $g_i^0 := \xi g_i$ for $i = 0, 1, 2, \dots$, so that $g_i = \rho
  g_i^0$.  A simple estimate using the finite volume of $g_0$ shows
  that $\rho \in L^{n/2}(M, g_0^0)$.

  For each $i = 0,1,2,\dots$, let's abbreviate $\psi_i :=
  \psi_{g_i^0}$.  Then, we set
  \begin{equation}\label{eq:140}
    \lambda := \frac{4}{n} \left( \rho^{n/4} - 1 \right).
  \end{equation}
  Clearly $\psi_i(\lambda) = \rho g_i^0 = g_i$ for $i = 0, 1, 2,
  \dots$.  Moreover, we claim that $\lambda \in L^2(M, g_0^0)$ and
  hence we can find a sequence $\{\lambda_l\}$ of bounded, measurable
  functions on $M$ that converge in $L^2(M, g_0^0)$ to $\lambda$ as $l
  \rightarrow \infty$.  That $\lambda \in L^2(M, g_0^0)$ follows from
  two facts.  First, $\rho \in L^{n/2}(M, g_0^0)$, implying that
  $\rho^{n/4} \in L^2(M, g_0^0)$.  Second, finite volume of $g_0^0$
  implies that the constant function $1 \in L^2(M, g_0^0)$ as well.

  Since $\rho > 0$, we have $\lambda > - \frac{4}{n}$.  Thus, we can
  choose our $\lambda_l$ such that $\lambda_l > -\frac{4}{n}$.  We
  then have, by Lemma \ref{lem:12}, that
  \begin{equation}\label{eq:19}
    d(\psi_0(\lambda_l), g_0) = d(\psi_0(\lambda_l), \psi_0(\lambda)) \leq
    \sqrt{n} \norm{\lambda - \lambda_l}_{g_0^0},
  \end{equation}
  and so by our choice of $\lambda_l$,
  \begin{equation}\label{eq:21}
    \lim_{l \rightarrow \infty} d(\psi_0(\lambda_l), g_0) \leq \lim_{l
      \rightarrow \infty} \sqrt{n} \norm{\lambda - \lambda_l}_{g_0^0} = 0.
  \end{equation}

  It is possible to choose each $\lambda_l$ such that $\lambda_l(x)
  \leq \lambda(x)$ for all $x \in M$.  Define $\rho_l$ to be the
  function such that $\psi_0(\lambda_l) = \rho_l g_0^0$, and set
  $\sigma_l := \rho_l \xi$.  Thus,
  \begin{equation*}
    \rho_l =
    \left(
      1 + \frac{n}{4} \lambda_l
    \right)^{4/n} \leq \left(
      1 + \frac{n}{4} \lambda
    \right)^{4/n} = \rho,
  \end{equation*}
  and since $g_i^0 = \xi g_i$, we have $\rho_l g_i^0 = \sigma_l g_i$
  for $i = 0, 1, 2, \dots$.  But since $0 < \rho_l \leq \rho$ and $0 <
  \xi = \rho^{-1}$, we see that $0 < \sigma_l \leq 1$ for all $l \in
  \N$.  Thus, by Lemma \ref{lem:8}, $\psi_k(\lambda_l) = \sigma_l g_k
  \xrightarrow{\Theta_M} \sigma_l g_0 = \psi_0(\lambda_l)$, as $k
  \rightarrow \infty$, for each $l \in \N$.  Additionally, since
  $\sigma_l g_0 = \rho_l g_0^0$ is bounded (because each $\lambda_l$
  is bounded by assumption), Proposition \ref{prop:3} gives that
  \begin{equation}\label{eq:17}
    \lim_{k \rightarrow \infty} d(\psi_k(\lambda_l),
    \psi_0(\lambda_l)) = 0
  \end{equation}
  for each $l \in \N$.
  
  Now, as above, Lemma \ref{lem:12} gives
  \begin{equation}\label{eq:29}
    d(\psi_k(\lambda_l), g_k) = d(\psi_k(\lambda_l), \psi_k(\lambda))
    \leq \sqrt{n} \norm{\lambda - \lambda_l}_{g_k^0}.
  \end{equation}
  Since $g_k \xrightarrow{\Theta_M} g_0$ and $\xi \leq 1$, Lemma
  \ref{lem:8} implies that $g_k^0 = \xi g_k \xrightarrow{\Theta_M} \xi
  g_0 = g_0^0$.  Thus, Lemma \ref{lem:9} yields
  \begin{equation}\label{eq:20}
    \lim_{k \rightarrow \infty} d(\psi_k(\lambda_l), g_k)
    \leq \lim_{k \rightarrow \infty} \sqrt{n} \norm{\lambda -
      \lambda_l}_{g_k^0} = \sqrt{n} \norm{\lambda -
      \lambda_l}_{g_0^0}.
  \end{equation}
  (We have used (\ref{eq:29}) to obtain the inequality and Lemma
  \ref{lem:9} to obtain the equality.)

  This gives us all the pieces we need to estimate $d(g_k, g_0)$ using
  the triangle inequality.  By (\ref{eq:20}), we can choose $k$ large
  enough that
  \begin{equation*}
    d(g_k, \psi_k(\lambda_l))
    \leq \sqrt{n} \norm{\lambda -
      \lambda_l}_{g_0^0} + \frac{\epsilon}{4}.
  \end{equation*}
  By (\ref{eq:17}), we can also choose $k$ large enough that
  \begin{equation*}
    d(\psi_k(\lambda_l), \psi_0(\lambda_l)) < \frac{\epsilon}{4}.
  \end{equation*}
  Finally, using (\ref{eq:21}), we can choose $l$ large enough that
  \begin{equation*}
    \sqrt{n} \norm{\lambda - \lambda_l}_{g_0^0} < \frac{\epsilon}{4}
    \qquad \textnormal{and} \qquad d(\psi_0(\lambda_l), g_0) < \frac{\epsilon}{4}.
  \end{equation*}
  Putting together these four inequalities completes the proof of the
  theorem.
\end{proof}

In view of Theorem \ref{thm:5}, we get the following immediate
corollary, which is perhaps of independent interest.  It will also be
useful in the next subsection.

\begin{corollary}\label{cor:1}
  Let $g_k, g_0 \in \Mf$, and let $g_k \xrightarrow{d} g_0$.  Then
  \begin{equation*}
    \left(
      \frac{\mu_{g_k}}{\mu_g}
    \right) \xrightarrow{L^1(M, g)}
    \left(
      \frac{\mu_{g_0}}{\mu_g}
    \right).
  \end{equation*}
  Equivalently, $\mu_{g_k}$ converges uniformly to $\mu_{g_0}$.
\end{corollary}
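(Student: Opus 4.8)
The plan is to derive the corollary directly from Theorem \ref{thm:5}, once one observes that $d$-convergence of a sequence in $\Mf$ forces $\TM$-convergence. For the latter implication there are two equally short options: either invoke the full equivalence of Theorem \ref{thm:2}, or simply note that this is the \emph{elementary} direction, following at once from the estimate \eqref{eq:32}---more precisely, from its extension to $\overline{(\M,d)} \cong \Mfhat$ in Proposition \ref{prop:6}---since $\Vol(M, g_0) < \infty$ because $g_0 \in \Mf$. Either way, from $g_k \xrightarrow{d} g_0$ we obtain $g_k \xrightarrow{\Theta_M} g_0$.

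Now Theorem \ref{thm:5} applies to this $\TM$-convergent sequence: its hypothesis that the limit lie in $\Mm$ is satisfied (indeed $g_0 \in \Mf$ is given outright), so statement \eqref{item:5} yields $(\mu_{g_k}/\mu_g) \xrightarrow{L^1(M,g)} (\mu_{g_0}/\mu_g)$, and statement \eqref{item:6} yields that $\mu_{g_k}$ converges uniformly to $\mu_{g_0}$. The equivalence of these two assertions is in any case exactly Lemma \ref{lem:18} applied with $\nu = \mu_g$, $\mu_k = \mu_{g_k}$, $\mu = \mu_{g_0}$: all three measures are finite since $g_k, g_0 \in \Mf$, and all are absolutely continuous with respect to $\mu_g$ as noted after Definition \ref{dfn:5}. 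I do not anticipate any obstacle here---the corollary merely repackages Theorems \ref{thm:2} and \ref{thm:5}, and the proof is a two-line chain of implications, the only new input being Theorem \ref{thm:2}, which has just been established.
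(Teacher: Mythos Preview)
Your proof is correct and matches the paper's approach exactly: the paper states the corollary as immediate from Theorem \ref{thm:5} (having just proved Theorem \ref{thm:2}, so that $d$-convergence implies $\TM$-convergence), which is precisely your chain of implications. Your additional remark that the easy direction of Theorem \ref{thm:2} via Proposition \ref{prop:6} already suffices is a nice observation, though not needed here.
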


At this point, we wish to move on to studying the relation between the
completions of $(\M, d)$ and $(\M, \TM)$, which amounts to
investigating which sequences are Cauchy with respect these metrics,
and when two Cauchy sequences are equivalent.  But because pointwise,
$\overline{(\M, d)}$ and $\overline{(\M, \TM)}$ are in bijection with
one another (through their respective bijections with $\Mfhat$), we
can avoid direct considerations of these issues.  In fact, there is
little argumentation yet remaining.

\begin{theorem}\label{thm:3}
  A sequence in $\{g_k\} \subset \M$ is $d$-Cauchy if and only if it
  is $\Theta_M$-Cauchy.
\end{theorem}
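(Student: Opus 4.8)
The plan is to deduce this theorem directly from the convergence results already assembled, treating the two implications separately; neither requires new machinery.

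For the forward direction, suppose $\{g_k\} \subset \M$ is $d$-Cauchy. First I would observe that the total volumes are bounded: by Lemma \ref{lem:3} the sequence $\sqrt{\Vol(M, g_k)}$ is Cauchy in $\R$, hence bounded, say $\Vol(M, g_k) \le V$ for all $k$. Then, applying the estimate \eqref{eq:32} with $g_0 := g_k$ and $g_1 := g_l$ (both lie in $\M$, so \eqref{eq:32} applies), we get for all $k, l$
\[
  \Theta_M(g_k, g_l) \le d(g_k, g_l)\bigl(\sqrt{n}\, d(g_k, g_l) + 2\sqrt{V}\,\bigr).
\]
Since $d(g_k, g_l) \to 0$ as $k, l \to \infty$, the right-hand side tends to $0$, so $\{g_k\}$ is $\Theta_M$-Cauchy.

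For the converse, suppose $\{g_k\} \subset \M$ is $\Theta_M$-Cauchy. By Theorem \ref{thm:4} together with Theorem \ref{thm:5}, the sequence $\Theta_M$-converges to some $g_0 \in \Mf$. Since $g_k \in \M \subset \Mf$ as well, Theorem \ref{thm:2} applies and gives $g_k \xrightarrow{d} g_0$, i.e., $d(g_k, g_0) \to 0$ in $\overline{(\M, d)} \cong \Mfhat$. The triangle inequality for $d$ then yields $d(g_k, g_l) \le d(g_k, g_0) + d(g_0, g_l) \to 0$, and because $\M$ is isometrically embedded in $\overline{\M}$, this says precisely that $\{g_k\}$ is $d$-Cauchy as a sequence in $\M$. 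This completes both directions.

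I do not anticipate a genuine obstacle: the statement is a formal consequence of the earlier results, with all the real content lying in Theorems \ref{thm:2}, \ref{thm:4}, and \ref{thm:5}. The one point deserving care is the converse direction, which leans on the nontrivial input that every $\Theta_M$-Cauchy sequence actually converges to an element of $\Mf$ (equivalently, on the description of the $\Theta_M$-completion) and on the equivalence of $d$- and $\Theta_M$-convergence to a point of $\Mf$; one should also make sure the $d$-convergence in the completion is translated back to the $d$-Cauchy property in $\M$ via the isometric embedding.
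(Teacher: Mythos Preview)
Your proof is correct and follows essentially the same approach as the paper: the forward direction uses the estimate \eqref{eq:32} (the paper phrases this via Proposition~\ref{prop:6}, which is just its extension to the completion), and the converse deduces $d$-convergence of a $\Theta_M$-Cauchy sequence to a point of $\Mf$ via Theorems~\ref{thm:4}, \ref{thm:5}, and~\ref{thm:2} (the paper packages the first two as Theorem~\ref{thm:12}). The only cosmetic difference is that you spell out the volume bound via Lemma~\ref{lem:3} before invoking \eqref{eq:32}, whereas the paper skips this since Proposition~\ref{prop:6} already contains the needed inequality.
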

\begin{proof}
  That a $d$-Cauchy sequence is $\Theta_M$-Cauchy is immediate from
  Proposition \ref{prop:6}.  So we turn to the converse statement.

  Let $\{g_k\}$ be $\TM$-Cauchy, and let $g_0 \in \Mf$ be a
  representative of its $\TM$-limit in $\Mfhat$ as guaranteed by
  Theorem \ref{thm:12}.  Then $g_k \xrightarrow{\TM} g_0$, implying
  that $g_k \xrightarrow{d} g_0$.  But as a $d$-convergent sequence,
  $\{g_k\}$ is necessarily $d$-Cauchy, as was to be proved.
\end{proof}

\begin{theorem}\label{thm:13}
  There exist natural homeomorphisms $\overline{(\M, d)} \cong
  \overline{(\M, \TM)} \cong \Mfhat$.
\end{theorem}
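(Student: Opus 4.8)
The plan is to realize the homeomorphisms as composites with the two identifications with $\Mfhat$ that have already been constructed, and then to reduce the assertion to the fact that the two completion metrics induce the same topology on the single set $\Mfhat$. There is essentially no analytic work left to do; everything rests on Theorems \ref{thm:6}, \ref{thm:12} and \ref{thm:2} together with Proposition \ref{prop:6}.

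First I would name the maps. Theorem \ref{thm:6} gives a natural bijection $\Omega \colon \overline{(\M, d)} \to \Mfhat$, sending the class of a $d$-Cauchy sequence to the element of $\Mfhat$ to which its members $\omega$-subconverge. Theorem \ref{thm:12} gives, in the same spirit, a natural bijection $\Omega_{\TM} \colon \overline{(\M, \TM)} \to \Mfhat$, sending the class of a $\TM$-Cauchy sequence to its $\TM$-limit in $\Mf$ (well defined modulo $\sim$ by Theorems \ref{thm:4} and \ref{thm:5}). Both of these restrict to the canonical inclusion $\M \hookrightarrow \Mfhat$: a metric $g \in \M$ is positive definite, so $X_g$ is empty and the class $[g] \in \Mfhat$ is represented by $g$ itself, while the constant sequence $\{g\}$ both $\omega$-converges and $\TM$-converges to $g$. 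Hence $\Phi := \Omega_{\TM}^{-1} \circ \Omega \colon \overline{(\M, d)} \to \overline{(\M, \TM)}$ is a bijection which is the identity on the embedded copy of $\M$; this is the ``natural'' map that must be shown to be a homeomorphism. (Theorem \ref{thm:3} makes the picture especially transparent, since it says the $d$-Cauchy and $\TM$-Cauchy sequences in $\M$ are literally the same sequences, so $\Phi$ could equally be described directly on representatives.)

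Transporting the two completion topologies through $\Omega$ and $\Omega_{\TM}$, showing $\Phi$ is a homeomorphism amounts to showing that the metric $d$ and the metric $\TM$ --- both of which are honest metrics on the one set $\Mfhat$, the first via Theorem \ref{thm:6} and the conventions of Section \ref{sec:completion-m}, the second via Theorem \ref{thm:12} --- induce the same topology there. Since a metric topology is determined by its convergent sequences, it suffices to check that for any $\{[g_k]\}$ and $[g_0]$ in $\Mfhat$ one has $d([g_k], [g_0]) \to 0$ if and only if $\TM([g_k], [g_0]) \to 0$. Passing to representatives $g_k, g_0 \in \Mf$, and recalling that the metrics induced on $\Mfhat$ restrict to the pseudometrics $d$ and $\TM$ on $\Mf$, this equivalence is exactly the content of Theorem \ref{thm:2} (its forward implication being Proposition \ref{prop:6}). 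Therefore the $d$- and $\TM$-topologies on $\Mfhat$ agree, $\Phi$ is a homeomorphism, and the chain $\overline{(\M, d)} \cong \overline{(\M, \TM)} \cong \Mfhat$ follows.

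The ``main obstacle'' is really just bookkeeping rather than a substantive difficulty: one must verify carefully that $\Omega$ and $\Omega_{\TM}$ agree on (and so correctly identify) the embedded copy of $\M$, so that $\Phi$ genuinely deserves to be called natural, and one must invoke correctly the elementary fact that a metrizable topology is determined by its convergent sequences. I would also remark in passing that, since $d$ and $\TM$ are both complete on $\Mfhat$ (each arising as a completion of $\M$), $\Phi$ is an identification of complete metric spaces --- though emphatically not an isometry, as $d$ and $\TM$ differ in general.
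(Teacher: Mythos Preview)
Your proof is correct and follows essentially the same route as the paper's: both reduce to the prior identifications with $\Mfhat$ (Theorems \ref{thm:6} and \ref{thm:12}) and then invoke Theorem \ref{thm:2} for the topological content. The only difference is one of emphasis: the paper verifies that the $d$- and $\TM$-equivalence relations on Cauchy sequences in $\M$ coincide (so the two completions are literally the same set of equivalence classes), whereas you transport both metrics to $\Mfhat$ and verify that the convergent sequences agree there. These are two phrasings of the same argument, and your version is arguably the more explicit one about the homeomorphism property.
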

\begin{proof}
  We know that a $d$-Cauchy sequence is $\TM$-Cauchy and vice versa,
  as well as that both completions can be identified with $\Mfhat$.
  We must still see that two Cauchy sequences $\{g_k^0\}$ and
  $\{g_k^1\}$ are $d$-equivalent (i.e., $\lim_{k \rightarrow \infty}
  d(g_k^0, g_k^1) = 0$) if and only if they are $\TM$-equivalent
  (i.e., $\lim_{k \rightarrow \infty} \TM(g_k^0, g_k^1) = 0$).  But
  assume $\lim_{k \rightarrow \infty} d(g_k^0, g_k^1) = 0$.  Since
  $\Mfhat$ is complete with respect to $d$ and $\{g_k^0\}$ and
  $\{g_k^1\}$ are $d$-Cauchy, they $d$-converge to some elements $g_0$
  and $g_1$, respectively.  By Theorem \ref{thm:2}, the sequences
  $\TM$-converge to $g_0$ and $g_1$ as well.  However, since $\lim_{k
    \rightarrow \infty} d(g_k^0, g_k^1) = 0$, we must have that $g_0 =
  g_1$, from which we conclude that $\lim_{k \rightarrow \infty}
  \TM(g_k^0, g_k^1) = 0$.  The converse statement is proved in exactly
  the same way.
\end{proof}

\subsection{Another characterization of convergence in $\M$}\label{sec:anoth-char-conv}

The last major result of the paper is another characterization of
convergence in $\M$ that does not require reference to either $d$ or
$\TM$.  We will state it after a brief lemma.  After showing the
convergence result using Theorem \ref{thm:2}, we use it to prove the
discontinuity of various geometric quantities on $\M$.

\begin{lemma}\label{lem:15}
  Let $\{g_k\} \subset \Mf$ $\Theta_M$-converge to $g_0 \in \Mf$.
  Then for each representative $g_0 \in [g_0]$, $\{g_k\}$ $\absgx{\, \cdot \,}$-converges to $g_0$ in measure on $M
  \setminus D_{\{g_k\}} = M \setminus X_{g_0}$.
\end{lemma}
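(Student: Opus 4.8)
The plan is to reduce the statement to a fact we already have in hand, namely that $\Theta_M$-convergence implies $\tgx$-convergence in measure (from Theorem~\ref{thm:4}), by showing that on the set $M \setminus X_{g_0}$, where $g_0(x)$ is \emph{positive definite}, the metric $\tgx$ and the norm $\absgx{\, \cdot \,}$ induce comparable notions of convergence. First I would recall, via Theorem~\ref{thm:4} together with Theorem~\ref{thm:5}, that $\{g_k\}$ $\tgx$-converges to $g_0$ in $\mu_g$-measure and that $X_{g_0} = D_{\{g_k\}}$ up to a nullset; this justifies the identification $M \setminus D_{\{g_k\}} = M \setminus X_{g_0}$ in the statement. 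So it remains to upgrade $\tgx$-convergence in measure to $\absgx{\, \cdot \,}$-convergence in measure on $M \setminus X_{g_0}$.

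Next I would introduce a filtration of $M \setminus X_{g_0}$ by the sets
\begin{equation*}
  F_m := \left\{ x \in M \setminus X_{g_0} \midmid \sqrt{\det G_0(x)} \geq \tfrac{1}{m},\ \abs{(g_0)_{ij}(x)} \leq m \ \textnormal{for all}\ 1 \leq i,j \leq n \right\},
\end{equation*}
so that $\bigcup_m F_m = M \setminus X_{g_0}$ up to a nullset (every point where $g_0$ is positive definite has $\det G_0 > 0$, and the coefficients are locally bounded in our amenable atlas). Given $\epsilon, \epsilon' > 0$, I would first choose $m$ large enough that $\mu_g\big((M \setminus X_{g_0}) \setminus F_m\big) < \epsilon'/2$; this is possible because $\mu_g$ is finite and the $F_m$ exhaust $M \setminus X_{g_0}$.

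The key comparison step is then carried out on $F_m$ using Lemma~\ref{lem:7}: with $\zeta = 1/m$ and $\tau = m$, the value of $g_0(x)$ lies in $\Matx^{\zeta,\tau}$ for every $x \in F_m$, and the lemma produces a uniform, positive function $\eta = \eta(\kappa)$ such that $\theta^g_x(a, b) < \eta(\kappa)$ implies $\absgx{b - a} < \kappa$ whenever $a \in \Matx^{\zeta,\tau}$. Hence, for $x \in F_m$,
\begin{equation*}
  \left\{ x \in F_m \midmid \absgx{g_k(x) - g_0(x)} \geq \epsilon \right\} \subseteq \left\{ x \in F_m \midmid \theta^g_x(g_k(x), g_0(x)) \geq \eta(\epsilon) \right\},
\end{equation*}
and the $\mu_g$-measure of the right-hand set tends to $0$ as $k \to \infty$ by the $\tgx$-convergence in measure just established. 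So I can choose $k_0$ such that this measure is $< \epsilon'/2$ for $k \geq k_0$. Combining with the bound on $(M \setminus X_{g_0}) \setminus F_m$ gives $\mu_g\big(\{ x \in M \setminus X_{g_0} \mid \absgx{g_k(x) - g_0(x)} \geq \epsilon \}\big) < \epsilon'$ for $k \geq k_0$, which is exactly $\absgx{\, \cdot \,}$-convergence in measure on $M \setminus X_{g_0}$.

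The main obstacle is making sure the comparison between $\tgx$ and $\absgx{\, \cdot \,}$ is genuinely \emph{uniform} in $x$; this is precisely why one must pass to the sets $F_m$, where $g_0(x)$ is bounded away from the degenerate boundary $\partial \Matx$ and from infinity, so that Lemma~\ref{lem:7}'s compactness argument applies and yields a single threshold $\eta(\epsilon) > 0$ valid for all the relevant basepoints. Away from such a restriction the two topologies, though equivalent pointwise, degenerate incomparably as $\det G_0(x) \to 0$, so no uniform estimate can hold on all of $M \setminus X_{g_0}$ at once — the exhaustion argument is what circumvents this.
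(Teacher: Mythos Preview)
Your proposal is correct and follows essentially the same approach as the paper: exhaust $M \setminus X_{g_0}$ by sets where $g_0(x)$ lies in a compact $\Matx^{\zeta,\tau}$, invoke Lemma~\ref{lem:7} for a uniform comparison between $\tgx$ and $\absgx{\,\cdot\,}$ there, and feed in the $\tgx$-convergence in measure from Theorem~\ref{thm:4}. The only cosmetic differences are that the paper works with a single pair $(\zeta,\tau)$ chosen small/large enough rather than a filtration $F_m$, and your parenthetical ``the coefficients are locally bounded in our amenable atlas'' is slightly misleading (a measurable $g_0$ need not have bounded coefficients), though the exhaustion claim $\bigcup_m F_m = M \setminus X_{g_0}$ a.e.\ is nonetheless correct since at a.e.\ point the coefficients are finite.
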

\begin{proof}
  Let $\delta, \epsilon > 0$ be given; we must find $k_0 \in \N$ such
  that $k \geq k_0$ implies that if
  \begin{equation*}
    Y_k^\delta := \left\{
      x \in M \midmid \absgx{g_0(x) - g_k(x)} \geq \delta
    \right\},
  \end{equation*}
  then $\Vol(Y_k^\delta \setminus X_{g_0}, g) < \epsilon$.
  
  For $\zeta, \tau > 0$, we define (as in Lemma \ref{lem:7})
  \begin{equation*}
    \Matx^{\zeta, \tau} :=
    \left\{
      a \in \Matx \midmid \det A \geq \zeta,\ \abs{a_{ij}}
      \leq \tau\ \textnormal{for all}\ 1 \leq i, j \leq n
    \right\}.
  \end{equation*}
  Choose $\zeta$ small enough and $\tau$ large enough that if
  \begin{equation*}
    E^{\zeta, \tau} :=
    \left\{
      x \in M \midmid g_0(x) \in \Matx^{\zeta, \tau}
    \right\},
  \end{equation*}
  then $\Vol(M \setminus(X_{g_0} \cup E^{\zeta, \tau}), g) < \epsilon
  / 2$.

  Furthermore, let $\eta$ be defined as in Lemma \ref{lem:7}.  By Theorem
  \ref{thm:4}, $\{g_k\}$ $\tgx$-converges to $g_0$ in measure, so we
  can find $k_0 \in \N$ such that $k \geq k_0$ implies that for
  \begin{equation*}
    Z_k^\delta := \left\{
      x \in E^{\zeta, \tau} \midmid \tgx(g_k(x), g_0(x) \geq \eta(\delta)
    \right\},
  \end{equation*}
  $\Vol(Z_k^\delta, g) < \epsilon / 2$.  On the other hand, we have by
  definition that
  \begin{equation*}
    \absgx{g_0(x) - g_k(x)} < \delta
  \end{equation*}
  for $x \in E^{\zeta, \tau} \setminus Z_k^\delta$, implying that
  $Y^\delta_k \cap E^{\zeta,\tau} \subseteq Z^\delta_k$, so
  \begin{equation*}
    \Vol(Y_k^\delta \setminus X_{g_0}, g) \leq \Vol(M \setminus (X_{g_0} \cup
    E^{\zeta, \tau}), g) + \Vol(Z_k^\delta, g) < \epsilon,
  \end{equation*}
  as was to be shown.
\end{proof}

\begin{theorem}\label{thm:1}
  Say $\{g_k\} \in \Mf$ and $g_0 \in \M$.  Then $g_k \xrightarrow{d}
  g_0$ if and only if
  \begin{enumerate}
  \item \label{item:1} $g_k \rightarrow g_0$ in measure,
  \end{enumerate}
  and additionally one of the following conditions holds:
  \begin{enumerate}
  \item[2a.] \label{item:2} $\mu_k$ converges to $\mu$ uniformly; or
  \item[2b.] \label{item:3}
    \begin{equation*}
      \left(
        \frac{\mu_{g_k}}{\mu_g}
      \right) \xrightarrow{L^1(M, g)}
      \left(
        \frac{\mu_{g_0}}{\mu_g}
      \right).
    \end{equation*}
  \end{enumerate}
\end{theorem}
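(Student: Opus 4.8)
The plan is to reduce both implications to Theorem~\ref{thm:2} (which says $d$ and $\TM$ induce the same convergence on $\Mf$) together with the $L^1$-convergence criterion of Theorem~\ref{thm:11}. First observe that conditions~2a and~2b are equivalent: apply Lemma~\ref{lem:18} with $\mu_k = \mu_{g_k}$, $\mu = \mu_{g_0}$, and $\nu = \mu_g$ (the absolute continuity and finiteness hypotheses hold since $g_k, g_0 \in \Mf$), which identifies uniform convergence $\mu_{g_k} \to \mu_{g_0}$ with the $L^1(M,g)$-convergence $(\mu_{g_k}/\mu_g) \to (\mu_{g_0}/\mu_g)$. Hence it suffices to prove the theorem with ``2b'' in place of ``2a or 2b''.

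For the forward implication, suppose $g_k \xrightarrow{d} g_0$. By Theorem~\ref{thm:2} we also have $g_k \xrightarrow{\TM} g_0$, so 2b is immediate from Corollary~\ref{cor:1}. For condition~\eqref{item:1}, Lemma~\ref{lem:15} gives that $\{g_k\}$ converges to $g_0$ in $\absgx{\,\cdot\,}$-measure on $M \setminus X_{g_0}$; since $g_0 \in \M$ we have $\det G_0 > 0$ everywhere, so $X_{g_0} = \emptyset$ and this is convergence in measure on all of $M$.

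For the converse, assume \eqref{item:1} and 2b. By Theorem~\ref{thm:2} it is enough to show $\TM(g_k, g_0) \to 0$, that is, that the functions $f_k(x) := \tgx(g_k(x), g_0(x))$ converge to $0$ in $L^1(M,g)$; for this I will verify the two conditions of Theorem~\ref{thm:11} with $p = 1$ and limit function $0$. Uniform absolute continuity of $\{f_k\}$ with respect to $\mu_g$: Proposition~\ref{prop:7}, extended to $\cl(\Matx)$ exactly as in the proof of Proposition~\ref{prop:4}, gives the pointwise bound
\begin{equation*}
  f_k(x) \le C'(n)\left( \sqrt{\det G_k(x)} + \sqrt{\det G_0(x)} \right);
\end{equation*}
since $\sqrt{\det G_i} = (\mu_{g_i}/\mu_g)$, condition~2b together with Theorem~\ref{thm:11} shows that $\{(\mu_{g_k}/\mu_g)\}_k$ is uniformly absolutely continuous with respect to $\mu_g$, as is the single function $(\mu_{g_0}/\mu_g)$, and any family dominated by a uniformly absolutely continuous family is again uniformly absolutely continuous (this also shows each $f_k \in L^1(M,g)$). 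Convergence in measure $f_k \to 0$: fix $\delta > 0$. Since $g_0$ is a smooth metric on the compact manifold $M$ and we work in a fixed amenable atlas, there exist $\zeta, \tau > 0$ with $g_0(x) \in \Matx^{\zeta,\tau}$ for all $x$. Running the compactness argument of Lemma~\ref{lem:7} with the roles of $\tgx$ and $\absgx{\,\cdot\,}$ interchanged --- using the smooth dependence of $\langle \cdot, \cdot \rangle^0$ and $\langle \cdot, \cdot \rangle_{g(x)}$ on $(x,a)$ and the compactness of $M$ and $\Matx^{\zeta,\tau}$ --- produces a uniform $\epsilon_1 > 0$ such that, for every $x \in M$, $\absgx{a - g_0(x)} < \epsilon_1$ forces $a \in \Matx$ and $\tgx(a, g_0(x)) < \delta$. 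Therefore
\begin{equation*}
  \left\{ x \in M \mid f_k(x) \ge \delta \right\} \subseteq \left\{ x \in M \mid \absgx{g_k(x) - g_0(x)} \ge \epsilon_1 \right\},
\end{equation*}
whose $\mu_g$-measure tends to $0$ by condition~\eqref{item:1}. Theorem~\ref{thm:11} now yields $f_k \to 0$ in $L^1(M,g)$, i.e.\ $\TM(g_k, g_0) \to 0$, and then Theorem~\ref{thm:2} gives $g_k \xrightarrow{d} g_0$.

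I expect the one real difficulty to be the uniform-in-$x$ comparison of $\tgx$ with $\absgx{\,\cdot\,}$ used in the last step: one must confirm that the argument behind Lemma~\ref{lem:7} is genuinely symmetric in the two metrics, so it supplies a single modulus $\epsilon_1 = \epsilon_1(\delta)$ valid simultaneously over all fibers $\Matx$, and that this comparison --- supplied only with the crude information of condition~\eqref{item:1} --- already controls the sets where $g_k(x)$ is very large or nearly degenerate (it does, since $\absgx{\,\cdot\,}$-closeness to the fixed, uniformly positive-definite $g_0(x)$ precludes both). Note that condition~2b (equivalently 2a) is used \emph{only} to obtain the uniform absolute continuity, and it cannot be dispensed with: a sequence that equals $g_0$ off a set of vanishing $\mu_g$-measure but blows up on that set converges to $g_0$ in measure yet need not converge in $d$.
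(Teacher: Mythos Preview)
Your proof is correct; the forward implication is identical to the paper's, but in the converse you take a genuinely different route. The paper performs a cut-off argument: on the ``bad set'' $E^\epsilon_k = \{x : \tgx(g_k(x),g_0(x)) \ge \epsilon\}$ it replaces $g_k$ by $g_0$, bounds $\TM(g_k^0,g_0)$ trivially, and then uses Proposition~\ref{prop:4} together with condition~2a to control $\TM(g_k,g_k^0)$ via $\Vol(E^\epsilon_k,g_k)$ and $\Vol(E^\epsilon_k,g_0)$. You instead feed $f_k = \tgx(g_k,g_0)$ directly into the $L^1$-criterion of Theorem~\ref{thm:11}: the pointwise estimate of Proposition~\ref{prop:7} plus condition~2b yields uniform absolute continuity, while condition~\eqref{item:1} plus a reversed Lemma~\ref{lem:7} (legitimate, since $g_0 \in \M$ forces $g_0(x) \in \Matx^{\zeta,\tau}$ for \emph{all} $x$, and $\absgx{\,\cdot\,}$-closeness to such a point keeps $g_k(x)$ inside $\Matx$) yields convergence in measure. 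Your approach is cleaner in that it makes the roles of the two hypotheses transparent---\eqref{item:1} supplies measure convergence, 2b supplies uniform integrability---and avoids constructing auxiliary semimetrics; the paper's cut-off argument, on the other hand, parallels the method used throughout Section~\ref{sec:topol-quasi-amen} and never needs to invert the comparison of Lemma~\ref{lem:7}.
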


\begin{remark}\label{rmk:3}
  Recall that by our terminology (cf.~Definition \ref{dfn:1}),
  condition \ref{item:1} means that $g_k$ $\absdot_{g(x)}$-converges to
  $g_0$ in measure.
\end{remark}

\begin{proof}[Proof of Theorem \ref{thm:1}]
  The equivalence of conditions 2a and 2b is given by Lemma
  \ref{lem:18}, so we will simply work with condition 2a.

  First, suppose that $g_k \xrightarrow{d} g_0$.  Then conditions 2a
  and 2b are immediately implied by Corollary \ref{cor:1}.
  Condition \ref{item:1} follows from applying Theorem \ref{thm:2}
  followed by Lemma \ref{lem:15}.  (Note that in this case,
  $D_{\{g_k\}} = X_{g_0} = \emptyset$, since $g_0 \in \M$.)

  Conversely, let conditions \ref{item:1} and 2a hold.  By Theorem
  \ref{thm:2}, it suffices to show that $g_k \xrightarrow{\TM} g_0$.

  Let $\epsilon > 0$ be given.  By an argument exactly analogous to
  that in the proof of the last lemma, condition \ref{item:1}
  implies that $g_k$ $\tgx$-converges to $g_0$ in measure.  Lemma
  \ref{lem:14} then allows us to take the measure it converges in to
  be $\mu_{g_0}$ instead of $\mu_g$ as usual.  This means that if
  \begin{equation*}
    E^\epsilon_k := \left\{
      x \in M \midmid \tgx(g_k(x), g_0(x))
      \geq \epsilon
    \right\},
  \end{equation*}
  then for $k$ large enough,
  \begin{equation}\label{eq:1}
    \Vol(E^\epsilon_k, g_0) < \epsilon.
  \end{equation}
  
  For each $k \in \N$, define $g^0_k := \chi(M \setminus E^\epsilon_k)
  g_k + \chi(E^\epsilon_k) g_0$.  Then $\theta^g_x(g_k^0(x), g_0(x)) =
  0$ for $x \in E^\epsilon_k$ and $g_k^0(x) = g_k(x)$ for $x \notin
  E^\epsilon_k$, so
  \begin{equation}\label{eq:28}
    \TM(g_k^0, g_0) = \integral{M \setminus
      E^\epsilon_k}{}{\theta^g_x(g_k(x), g_0(x))}{d \mu_g} < 
    \Vol(M, g) \cdot \epsilon.
  \end{equation}
  On the other hand, by Proposition \ref{prop:4}, we have that
  \begin{equation*}
    \TM(g_k, g_k^0) \leq C'(n)
      ( \Vol(E^\epsilon_k, g_k) + \Vol(E^\epsilon_k, g_0) ),
  \end{equation*}
  since $g_k^0$ and $g_k$ differ only on $E^\epsilon_k$, where $g_k^0
  = g_0$.  By condition 2a and \eqref{eq:1}, the above thus implies
  that for $k$ large enough, $\TM(g_k, g_k^0) \leq 3 C'(n) \cdot
  \epsilon$.  Thus, by \eqref{eq:28} and the triangle inequality, we
  conclude
  \begin{equation*}
    \TM(g_k, g_0) \leq (\Vol(M, g) + 3 C'(n)) \cdot \epsilon.
  \end{equation*}
  Since $\epsilon$ was arbitrary, we have shown the desired result.
\end{proof}

This characterization of convergence is the most useful in practice,
essentially since it is so weak and easy to check.  Unfortunately,
this weakness is exactly what causes so many problems when attempting
to study the pseudometric induced on $\M / \D$ by $d$ (see the
Introduction).  The first question that one would ask in this context
is whether $d$ induces a metric space structure on $\M / \D$.  If this
is not true, then we can find metrics $g_0$ and $g_1$ in separate
diffeomorphism orbits and a sequence $\{ \varphi_k \} \subset \D$ such
that
\begin{equation}\label{eq:35}
  \lim_{k \rightarrow \infty} d(\varphi_k^* g_0, g_1) = 0.
\end{equation}
It seems very difficult to obtain an obstruction to this situation
given the characterization of convergence in Theorem \ref{thm:1}.
(The only one that one we know of can be immediately read off---if
$\Vol(M, g_0) \neq \Vol(M, g_1)$, then (\ref{eq:35}) cannot occur
thanks to Lemma \ref{lem:3}.)  In particular, if some
diffeomorphism-invariant geometric data were continuous with respect
to $d$, then $d$ would separate $\D$-orbits with varying data.
However, as the following collection of examples shows, the most
obvious geometric data is, in fact, discontinuous in a strong way.

To be precise, let $J$ define some geometric data, that is, a map $\M
\times M \rightarrow Y$, where $Y$ is some metric space with metric
$\delta$.  (For example, if $J$ is scalar curvature, then $J$ maps
into $\R$.)  We say that $J$ is \emph{continuous in measure} at $g_0
\in \M$ if for all $g_k \xrightarrow{d} g_0$ and all $\epsilon > 0$,
we have
\begin{equation*}
  \lim_{k \rightarrow \infty} \mu_g
  \left(
    \left\{
      p \in M \midmid \delta(J(g_k, p), J(g_0, p)) \geq \epsilon
    \right\}
  \right) = 0.
\end{equation*}

\begin{example}\label{eg:1}
  Let $M = T^2$, the two-dimensional torus, with its standard chart.
  (We take this to be the rectangle $[-1,1] \times [-1,1]$, in the
  $xy$-plane $\R^2$, with opposite edges identified.)  Let $g_0$ be the
  flat metric induced on $T^2$ via restriction of the Euclidean metric
  on $\R^2$ to this chart.  Then the following sequences $\{g_k\}$
  show that basic geometric data are discontinuous in measure at
  $g_0$.

  \emph{Curvature.}  If $J$ is any type of curvature, then it is clear
  from Theorem \ref{thm:1} that $J$ is discontinuous in measure on
  $\M$.

  \emph{Distance function.} Let $J$ be the distance function of the
  metric, i.e., $J(\tilde{g}, p) = d(p, \cdot) \in C^0(M)$.  For each
  $s \in (0, \frac{1}{2}]$, let $f_s$ be a smooth function defined on
  $[-1,1]$ such that $1 \leq f_s(t) \leq s^{-4}$ for all $t \in
  [-1,1]$, $f_s(s) = f_s(-s) = s^{-4}$, and $f_s(2s) = f_s(-2s) = 1$.
  Finally, let $g_k$ be the sequence given by
  \begin{equation*}
    g_k(x,y) :=
    \begin{cases}
      \begin{pmatrix}
        1 & 0 \\
        0 & 1
      \end{pmatrix}, & x \in [-1, -2/k] \cup [2/k, 1], \\
      \begin{pmatrix}
        f_{1/k}(x) & 0 \\
        0 & f_{1/k}^{-1}(x)
      \end{pmatrix}, & x \in (-2/k, -1/k) \cup (1/k, 2/k), \\
      \begin{pmatrix}
        k^4 & 0 \\
        0 & k^{-4}
      \end{pmatrix}, & x \in [-1/k, 1/k].
    \end{cases}
  \end{equation*}
  It is not hard to see from Theorem \ref{thm:1} that $g_k
  \xrightarrow{d} g_0$.  Furthermore, since $g_k(x,y)$ is constant in
  $y$, we see that the geodesics connecting points with equal
  $y$-coordinates are horizontal lines.  On the other hand, one also
  easily computes that the length, with respect to $g_k$, of
  horizontal lines passing all the way through the cylindrical region
  $\{(x,y) \mid x \in [-1/k,1/k] \}$ is unbounded as $k \rightarrow
  \infty$.  Thus the distance function is discontinuous in measure at
  $g_0$---geometrically, the torus converges a cylinder with two
  infinitely long cusps as ends.  (This is provided the functions
  $f_s$ are chosen ``well''.  The convergence can be taken to be,
  e.g., pointed Gromov--Hausdorff convergence.  See \cite[\S
  3.B]{gromov-metric-2007}.)  Note that more specifically, we have
  shown that the distance function is not ``upper semicontinuous in
  measure'' on $\M$.

  \emph{Diameter.}  Let $J(\tilde{g}, p)$ be the diameter, i.e.,
  $J(\tilde{g}, p) = \diam(M, \tilde{g})$ independently of $p$.
  Clearly, the above example shows that this is also discontinuous in
  measure at $g_0$.

  \emph{Injectivity radius.}  Let $J(\tilde{g}, p) =
  \inj_{\tilde{g}}(p)$.  Fix $k \in \N$, and define a region $E_k
  \subset T^2$ by
  \begin{equation*}
    E_k := \{ (x,y) \mid x \in [-3/4, 3/4],\ y \in [-1/k, 1/k] \}.
  \end{equation*}
  Note that $\Vol(E_k, g_0) = 3/k$.  Let $U_k$ be an open domain with
  $E_k \subset U_k$ and satisfying $\Vol(U_k, g_0) \leq 4/k$.  Now,
  choose any metric $g_k \in \M$ such that $g_k(p) = g_0$ for $p
  \notin U_k$, and for $p \in E_k$,
  \begin{equation*}
    g_k(p) = 
    \begin{pmatrix}
      k^{-1} & 0 \\
      0 & k
    \end{pmatrix}.
  \end{equation*}
  Furthermore, we assume that $g_k$ is chosen such that $(g_k)_{11}(p)
  \leq 1$ and $\mu_{g_k}(p) = \mu_{g_0}(p)$ for all $p \in U_k$.  Then
  one can compute that, with respect to $g_k$, the length of the
  closed curve $\gamma$ given by $\gamma(t) = (t, 0)$, for $t \in
  [-1,1]$, satisfies
  \begin{equation*}
    \limsup_{k \rightarrow \infty} L_{g_k}(\gamma) \leq \frac{1}{2}.
  \end{equation*}
  But from this, it is not hard to infer that $\limsup_{k \rightarrow
    \infty} \inj_{g_k}(p) < 1$ for $p$ on a set of positive
  $\mu_g$-measure.  Since it is also clear by Theorem \ref{thm:1} that
  $g_k \xrightarrow{d} g_0$, this shows that the injectivity radius is
  discontinuous in measure at $g_0$.  Note that this example also
  shows that the distance function is not even ``lower semicontinuous
  in measure''.
\end{example}

We remark that these examples do not give the situation of
(\ref{eq:35}), since the metrics $g_k$ in each are clearly mutually
non-isometric.  In fact, to date we do not have a single example where
we can compute the $d$-distance between two elements of $\M / \D$ with
equal total volumes, or even estimate this distance away from zero.

\bibliography{main}
\bibliographystyle{hamsplain}

\end{document}